
\documentclass[12pt, reqno,draft]{amsart}

\usepackage{amsmath}
\usepackage{amsthm}
\usepackage{amssymb}
\usepackage{amsrefs}
\usepackage{mathrsfs}
\usepackage[usenames]{color}


 \newtheorem{thm}{}[section]
 \newtheorem{theorem}[thm]{Theorem}
 \newtheorem{corollary}[thm]{Corollary}
 \newtheorem{lemma}[thm]{Lemma}
 \newtheorem{proposition}[thm]{Proposition}

 \theoremstyle{remark}
 \newtheorem{remark}[thm]{Remark}
 
 \newtheorem{example}[thm]{Example}
  \newtheorem{definition}[thm]{Definition}
 
 \numberwithin{equation}{section}
 \allowdisplaybreaks
 
 \newcommand{\dd}{\ensuremath{\mathbf{d}}}
\newcommand{\DD}{\ensuremath{\mathcal{D}}}
 \newcommand{\ww}{\ensuremath{\mathbf{w}}}
 \newcommand{\JJ}{\ensuremath{\mathcal{J}}}
  \newcommand{\FF}{\ensuremath{\mathbb{F}}}
\newcommand{\PX}{\ensuremath{\mathcal{W}}}
\newcommand{\BV}{\ensuremath{\mathrm{BV}}}
\newcommand{\VMO}{\ensuremath{\mathrm{VMO}}}
\newcommand{\SL}{\ensuremath{\mathscr{L}}}
 \newcommand{\SSS}{\ensuremath{\mathcal{S}}}
 \newcommand{\SSB}{\ensuremath{\mathbb{S}}}
  \newcommand{\sss}{\ensuremath{\mathbf{s}}}
 \newcommand{\LL}{\ensuremath{\mathcal{L}}}
 \newcommand{\li}{\ensuremath{\mathbf{l}}}
 
 \newcommand{\Id}{\operatorname{Id}}
 \newcommand{\NN}{\ensuremath{\mathbb{N}}}
 \newcommand{\UU}{\ensuremath{\mathbb{U}}}
\newcommand{\XX}{\ensuremath{\mathbb{X}}}
\newcommand{\YY}{\ensuremath{\mathbb{Y}}}
\newcommand{\yy}{\ensuremath{\mathbf{y}}}
\newcommand{\BB}{\ensuremath{\mathcal{B}}}
\newcommand{\OO}{\operatorname{\mathcal{O}}}
\newcommand{\ee}{\ensuremath{\mathbf{e}}}
\newcommand{\WW}{\ensuremath{\mathbb{W}}}
\newcommand{\VV}{\ensuremath{\mathbb{V}}}
\newcommand{\ZZ}{\ensuremath{\mathbb{Z}}}
\newcommand{\xx}{\ensuremath{\mathbf{x}}}
\newcommand{\zz}{\ensuremath{\mathbf{z}}}
\newcommand{\RR}{\ensuremath{\mathbb{R}}}

\newcommand{\GOW}{\operatorname{\mathrm{O}}}

\newcommand{\supp}{\operatorname{supp}}

\begin{document}

\title[Banach spaces with highly conditional quasi-greedy bases]{Building  highly conditional quasi-greedy bases in
classical Banach spaces}

\author[F. Albiac]{Fernando Albiac}
\address{Mathematics Department\\ 
Universidad P\'ublica de Navarra\\
Campus de Arrosad\'{i}a\\
Pamplona\\ 
31006 Spain}
\email{fernando.albiac@unavarra.es}

\author[J. L. Ansorena]{Jos\'e L. Ansorena}
\address{Department of Mathematics and Computer Sciences\\
Universidad de La Rioja\\ 
Logro\~no\\
26004 Spain}
\email{joseluis.ansorena@unirioja.es}

\subjclass[2010]{46B15, 41A65}

\keywords{Greedy algorithm, conditional basis, conditionality constants, quasi-greedy basis, superreflexivity}

\begin{abstract} It is known that for a conditional quasi-greedy basis $\BB$ in a Banach space $\XX$,  the associated sequence $(k_{m}[\BB])_{m=1}^{\infty}$ of its conditionality constants verifies the estimate $k_{m}[\BB]=\OO(\log m)$ and that if the reverse inequality $\log m =\OO(k_m[\BB])$ holds then $\XX$ is non-superreflexive. However, in the existing literature one finds very few instances of non-superreflexive spaces possessing quasi-greedy basis with  conditionality constants ``as large as possible."  
Our goal in this article is to fill this gap. To that end we enhance and exploit a combination of  techniques developed independently, on the one hand by Garrig\'os and Wojtaszczyk in \cite{GW2014}, and, on the other hand, by Dilworth et al. in \cite{DKK2003},  and craft a wealth of new examples of non-superreflexive classical Banach  spaces having quasi-greedy bases $\BB$ with $k_{m}[\BB]=\OO(\log m)$.\end{abstract}

\thanks{Both authors partially supported by the Spanish Research Grant \textit{An\'alisis Vectorial, Multilineal y Aplicaciones}, reference number MTM2014-53009-P. F. Albiac also acknowledges the support of Spanish Research Grant \textit{ Operators, lattices, and structure of Banach spaces},  with reference MTM2016-76808-P}

\maketitle

\section{Introduction}\label{Introduction}

\noindent Let $\BB=(\xx_n)_{n=1}^\infty$ be a  basis for a Banach space $\XX$ and let $(\xx_n^*)_{n=1}^\infty$ be its sequence of coordinate functionals. Given  a subset $A$ of $\NN$, the \textit{coordinate projection} on $A$ is (when well defined) the linear operator
 \[
 \textstyle
S_A[\BB]\colon \XX\to \XX, \quad f\mapsto \sum_{n\in A} \xx_n^*(f) \, \xx_n.
\]
The basis $\BB$  is \textit{unconditional} if and only if
 $\sup_{A \text{ finite}} \Vert S_A[\BB]\Vert<\infty$. Thus, in some sense, the \textit{conditionality} of   $\BB$ can be measured in terms of the growth of the sequence 
\[
\textstyle
k_m[\BB]:=\sup_{|A|\le m} \Vert S_A[\BB]\Vert, \quad m\in\NN.
\]
Recall that a  basis $\BB=(\xx_j)_{j=1}^\infty$   is said to be {\it quasi-greedy} if it is semi-normalized  (i.e., $\textstyle 0<\inf_{j\in\NN} \Vert \xx_j\Vert \le \sup_{j\in\NN} \Vert \xx_j\Vert<\infty$) and 
 there is a constant $C$ such that 
 \begin{equation}\label{DefAG}
\Vert f - S_A[\BB](f)\Vert \le C \Vert f \Vert
\end{equation}
whenever $f\in \XX$ and  $A\subseteq \NN$ are such that $|\xx_j^*(f)|\le |\xx_k^*(f)|$ for all $j\in \NN\setminus A$ and all $k\in A$.  The least constant $C$ such that \eqref{DefAG} holds  is known as the quasi-greedy constant of the basis (see \cite{AA2017bis}*{Remark 4.2}).

The next theorem summarizes the connection that exists between  superreflexivity  and the conditionality constants of quasi-greedy  bases. 
Recall that a space $\XX$ is said to be {\it superreflexive} if every Banach space finitely representable in $\XX$ is reflexive.
\begin{theorem}[see \cites{DKK2003,GW2014,AAGHR2015,AAW2017}]\label{CharacterizationSR}  Let $\XX$ be a Banach space.
\begin{enumerate}
\item[(a)] If $\BB$ is a quasi-greedy basis for $\XX$ then $k_m[\BB]\lesssim \log m$ for $m\ge 2$. 

\item[(b)] If $\XX$ is non-superreflexive then there is a quasi-greedy basis $\BB$ for a Banach space finitely representable in $\XX$ with $k_m[\BB] \gtrsim \log m $  for $m\ge 2$.

\item[(c)] If $\XX$ is superreflexive and $\BB$ is quasi-greedy then there is 
$0<a<1$ such that  $k_m[\BB]\lesssim (\log m)^a$ for $m\ge 2$. 

\item[(d)] For every $0<a<1$ there is a quasi-greedy basis $\BB$ for a Banach space (namely, $\ell_2$) finitely representable in $\XX$  with  $k_m[\BB] \gtrsim (\log m)^a$ for $m\ge 2$.
\end{enumerate}
\end{theorem}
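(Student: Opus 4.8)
Since the statement collects four facts of rather different natures, I would establish them one at a time. Parts (a) and (c) are \emph{a priori} upper bounds for the conditionality constants of an arbitrary quasi-greedy basis, proved by decomposing coordinate projections into pieces that quasi-greediness controls; parts (b) and (d) require \emph{constructing} bases whose conditionality is as large as (a) allows.

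For (a) I would first pass to the (equivalent, up to constants) suppression-quasi-greedy property and extract its two standard consequences: the basis is unconditional for constant coefficients --- $\|\sum_{n\in B}\varepsilon_n\xx_n\|\asymp\|\sum_{n\in B}\xx_n\|$ for finite $B$ and arbitrary signs $\varepsilon_n$, which one sees by splitting $B$ into its positive and negative parts and applying the suppression estimate to the all-ones vector --- and the truncation operators $T_\lambda$ (cap each coefficient at modulus $\lambda$) are uniformly bounded. Then, for $f$ with $\|f\|=1$ and $A$ with $|A|\le m$, I would discard the indices of $A$ outside $\supp f$, peel off the ``tiny tail'' $A''=\{n\in A:\ |\xx_n^*(f)|\le \|f\|/m\}$, whose projection is bounded by the triangle inequality alone (at most $m$ terms, each of norm $\lesssim\|f\|/m$), and split the rest into the $O(\log m)$ dyadic rings $A_k=\{n\in A:\ b2^{-k}<|\xx_n^*(f)|\le b2^{-k+1}\}$, where $b:=\max_n|\xx_n^*(f)|$. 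On each ring the coefficients are constant up to a factor $2$, so $S_{A_k}(f)$ may be realized as a greedy-type projection of a suitable truncation of $f$ and is therefore $\lesssim\|f\|$ uniformly in $k$; summing the $O(\log m)$ rings yields $k_m[\BB]\lesssim\log m$. The bookkeeping needed to turn each $A_k$ into a genuine greedy set --- handling indices outside $A$ and the finite-versus-infinite ring --- is routine. This is the Dilworth--Kalton--Kutzarova estimate, later streamlined in \cite{GW2014} and \cite{AAGHR2015}.

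For (c) I would re-run the dyadic decomposition of (a) but replace the triangle inequality in the final summation --- which costs a factor equal to the number $O(\log m)$ of rings --- by an estimate that exploits the geometry of $\XX$. The functional-analytic input is Pisier's renorming theorem: superreflexivity is equivalent to the existence of an equivalent norm whose modulus of uniform convexity is of power type, and this yields a genuine $\ell_p$-type gain, for some $p>1$, when one adds up a suitably ``independent'' family of vectors. The ring-projections $S_{A_k}(f)$ are disjointly supported and act, on each ring, on coefficients that are constant up to a factor $2$; using this near-constancy together with unconditionality for constant coefficients and the boundedness of the truncation operators, one shows that $(S_{A_k}(f))_k$ behaves, for the purpose of summing norms, like an $\ell_p$-average rather than an $\ell_1$-average, which turns the factor $O(\log m)$ into $O((\log m)^{1/p})$ and gives $k_m[\BB]\lesssim(\log m)^a$ with $a=1/p<1$. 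Reconciling the disjointly-supported-but-not-unconditional ring-projections with the power-type convexity inequality is the step I expect to be the main obstacle; it is carried out in \cite{AAGHR2015}.

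The existence statements I would obtain by leaning on the explicit constructions in the cited works. Part (d) is the lightest: by Dvoretzky's theorem $\ell_2$ is finitely representable in every infinite-dimensional Banach space, so it suffices to build, for each $0<a<1$, a quasi-greedy basis of $\ell_2$ with $k_m[\BB]\gtrsim(\log m)^a$, and for that I would take Garrig\'os and Wojtaszczyk's iterated systems from \cite{GW2014}, where a single $2\times2$ conditional transformation of bounded variation is composed with itself along a binary tree, tuned so that the initial-segment projections grow like the prescribed power of $\log m$ while the bounded variation of the coefficient transform keeps the basis quasi-greedy. For (b) I would invoke James's characterization of non-superreflexivity (no uniformly convex renorming, witnessed by the finite tree property, equivalently by $\ell_1^n$-type structures being finitely representable in $\XX$) and combine the appropriate finite-dimensional pieces with the Dilworth--Kalton--Kutzarova conditional-but-quasi-greedy gadget modeled on the summing basis, producing a quasi-greedy basis of a space $\YY$ finitely representable in $\XX$ whose initial-segment projections have norm $\gtrsim\log m$. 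In both constructive parts the obstacle is the same tension --- verifying the quasi-greedy inequality \eqref{DefAG} while keeping the conditionality maximal --- and both constructions succeed for the same structural reason: quasi-greediness only constrains the behavior of the \emph{largest} coefficients of a vector, which one arranges to be essentially unconditional, whereas conditionality is exhibited by partial sums of many \emph{small} coefficients, which one is free to make as wild as the $\log$-barrier permits.
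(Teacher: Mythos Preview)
The paper does not give its own proof of this theorem: it is stated as a compilation of known results, with the four parts attributed to the references \cite{DKK2003}, \cite{GW2014}, \cite{AAGHR2015}, and \cite{AAW2017}, and no argument is supplied in the text. So there is no ``paper's proof'' to compare your proposal against; what you have written is, in effect, a reader's guide to the cited literature.

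That said, your sketches for (a), (c), and (d) line up well with the cited sources. The dyadic-ring decomposition for (a) is exactly the Dilworth--Kalton--Kutzarova argument; the replacement of the $\ell_1$-sum by an $\ell_p$-sum via power-type convexity is the mechanism in \cite{AAGHR2015} for (c); and the iterated $2\times 2$ (Olevskii-type) construction in $\ell_2$ is what \cite{GW2014} does for (d). One small correction on (b): the paper attributes this part to \cite{AAW2017}, where the construction does not go through the DKK summing-basis gadget directly but rather through the Pisier--Xu interpolation spaces $\PX_{p,q}^0$ (built from $v_1$ and $c_0$) combined with the GOW-method; the link to an arbitrary non-superreflexive $\XX$ is that failure of superreflexivity forces these finite-dimensional pieces to embed uniformly, so the resulting space is finitely representable in $\XX$. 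Your description of (b) as assembling James-type finite blocks with a DKK-style conditional device is in the right spirit but mislocates the actual construction.
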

Theorem~\ref{CharacterizationSR}  characterizes both the superreflexivity and the lack of superreflexivity of a Banach space $\XX$ in terms of the growth of the conditionality constants of quasi-greedy bases. It could be argued that   the quasi-greedy bases whose existence is guaranteed in parts (b) and (d) lie outside  the space $\XX$, and that,  although this approach is consistent when dealing with ``super'' properties, in truth it does  not  tackle the question of the existence of a quasi-greedy basis  with large conditionality constants in the space $\XX$ itself! Hence this discussion naturally leads to the following two questions:

 \noindent\textbf{Question A}. Let $\XX$ be a non-superreflexive Banach space with a basis. Is there a quasi-greedy basis $\BB$ for $\XX$ with $k_m[\BB]\approx \log m$ for $m\ge 2$?

 \noindent\textbf{Question B}. Let $\XX$ be a superreflexive Banach space with a basis.   Given $0<a<1$, is there a quasi-greedy basis $\BB$ for $\XX$ with $k_m[\BB]\gtrsim (\log m)^a$ for $m\ge 2$?
 
Questions A and B can be regarded as a development of the query initiated by Konyagin and Telmyakov in 1999 \cite{KonyaginTemlyakov1999} of finding conditional quasi-greedy bases in general Banach spaces, and which has evolved  towards the more specific quest  of finding quasi-greedy bases  ``as conditional as possible.''  The reader will find a detailed account of this process in  the papers \cites{Wo2000,DKW2002,DKK2003,Gogyan2010, GHO2013, AAW2017}.

Let us outline the state of the art of those two questions. Garrig\'os and Wojtaszczyk  proved in \cite{GW2014}   that Question B has a positive answer for  $\XX=\ell_p$, $1<p<\infty$. As for Question A, it is known that   Lindenstrauss' basic sequence in $\ell_1$, the Haar system  in $\BV(\RR^d)$ for $d\ge 2$,  and  the unit-vector system in the Konyagin-Telmyakov space  $KT(\infty,p)$ for $1<p<\infty$, are all quasi-greedy basic sequences with conditionality constants as large as possible (see \cites{GHO2013,BBGHO2018}). Moreover, in \cite{GW2014} it is proved that the answer to Question A is positive por $\ell_1\oplus\ell_2\oplus c_0$, and in \cite{AAGHR2015} that  the same holds true for mixed-norm spaces  of the form $(\bigoplus_{n=1}^\infty \ell_1^n)_q$ ($1<q<\infty$), providing this way the first-known examples of reflexive Banach spaces having quasi-greedy bases with conditionality constants as large as possible. More recently, the authors constructed in \cite{AAW2017}  the first-known examples of Banach spaces  of nontrivial type and nontrivial cotype for which the answer to Question A is positive. These   spaces are denoted by  $ \PX_{p,q}^0\oplus\PX_{p,q}^0\oplus\ell_2$, $1<p,q<\infty$,   where $\PX^0_{p,q}$ and  $\PX_{p,q}$  are  the interpolation spaces
 \begin{equation}\label{PXSpaces}
  \PX^0_{p,q}=(v_1^0,c_0)_{\theta,q}, \quad\text{and}\quad \PX_{p,q}=(v_1,\ell_\infty)_{\theta,q}, \quad p=1/(1-\theta),
  \end{equation}
   defined from the space of sequences of bounded variation
\[
\textstyle
v_1=\{ (a_j)_{j=1}^\infty \colon |a_1|+\sum_{j=2}^\infty |a_{j}-a_{j-1}|<\infty\},
\]
and the subspace $v_1^0$ of $v_1$ resulting from  the intersection of $v_{1}$ with $c_{0}$. 
Here and throughout this paper,  $(\XX_0,\XX_1)_{\theta,q}$ denotes the Banach space obtained by applying the real interpolation method  to the Banach couple $(\XX_0,\XX_1)$ with indices $\theta$ and $q$. 
 
 In this article we develop the necessary  machinery that permits to extend  the scant list of known Banach spaces for which the answer either to Question A or to Question B is positive. Moreover, in some cases, the examples of bases we provide  are not only quasi-greedy but are  \textit{almost greedy}. Recall that a basis $\BB=(\xx_j)_{j=1}^\infty$ for a Banach space $\XX$ is \textit{almost greedy} if there is a constant $C$ such that
\[
\Vert f - S_A[\BB](f)\Vert \le C \Vert f - S_B[\BB](f)\Vert
\]
whenever $f\in\XX$, $|B|\le |A|<\infty$, and $|\xx_j^*(f)| \le |\xx_k^*(f)|$ for any $j\in \NN\setminus A$ and $k\in A$. Almost greedy bases were characterized in  \cite{DKKT2003} as those bases that are simultaneously quasi-greedy and democratic. 

 Our study includes, among other spaces, the finite direct sums  
 \[
 D_{p,q}:=\begin{cases} 
 \ell_p\oplus\ell_q  \text{ if }1\le p,q<\infty,  \\  
 \ell_p\oplus c_0 \text{  if } 1\le p<\infty \text{  and  } q=0,
  \end{cases}
 \]
 the matrix spaces 
 \[
 Z_{p,q}:=\begin{cases}
  \ell_q(\ell_p) \text{  if  }1\le p,q<\infty,  \\  
  c_0(\ell_p)  \text{  if  } 1\le p<\infty \text{  and  } q=0, \\
 \ell_q(c_0) \text{  if  } p=0 \text{  and  } 1\le q<\infty, 
 \end{cases}
 \]
 and the mixed-norm spaces of the family
 \[
B_{p,q}:=(\oplus_{n=1}^\infty \ell_p^{n})_q, \,  p\in[1,\infty],  \, q\in\{0\}\cup[1,\infty), \,q\not=0 \text{ when }p=\infty.
\]
We use  $(\oplus_{n=1}^\infty \XX_n)_q$ to denote the direct sum of the Banach spaces $\XX_n$ in the $\ell_q$-sense ($c_0$-sense if $q=0$).

In Section~\ref{Main}, roughly speaking, we prove that in all non-superreflexive spaces in the aforementioned list the answer  to Question A is positive. We also show that, in turn, in all superreflexive spaces in the above list the answer to Question B is positive. Previously, in Section~\ref{Preliminaries}, we introduce the tools that we will use to achieve this goal.

Throughout this article we follow standard Banach space terminology and notation as can be found, e.g.,  in \cite{AlbiacKalton2016} but we would like to single out the notation that  is more commonly employed. We deal with real or complex Banach spaces, and $\FF$ will  denote the underlying scalar field.  As it is customary,  we put $\delta_{j,k}=1$ if $j=k$ and $\delta_{j,k}=0$ otherwise. Given $j\in\NN$, the $j$-th unit vector is defined by $\ee_j=(\delta_{j,k})_{k=1}^\infty$ and  the {\it unit-vector system} will be sequence $(\ee_j)_{j=1}^\infty$. Given families of positive real numbers $(\alpha_i)_{i\in I}$ and $(\beta_i)_{i\in I}$, the symbol $\alpha_i\lesssim \beta_i$ for $i\in I$ means that $\sup_{i\in I}\alpha_i/\beta_i <\infty$, while $\alpha_i\approx \beta_i$ for $i\in I$ means that $\alpha_i\lesssim \beta_i$ and $\beta_i\lesssim \alpha_i$ for $i\in I$. Applied to Banach spaces, the symbol $\XX\approx \YY$ means that the spaces $\XX$ and $\YY$ are isomorphic, while the symbol $\XX\lesssim_c\YY$ means that  $\XX$ is isomorphic to a complemented subspace of $\YY$. Given families of Banach spaces
$(\XX_i)_{i\in I}$ and $(\YY_i)_{i\in I}$, the symbol $\XX_i\lesssim_c \YY_i$ for $i\in I$ means that  the spaces
$\XX_i$ are uniformly isomorphic to  complemented subspaces of $\YY_i$, i.e., there are  linear operators
$L_i\colon \XX_i \to \YY_i$, $R_i\colon \YY_i \to \XX_i$ such that $R_i\circ L_i=\Id_{\XX_i}$ and $\sup_i \Vert R_i\Vert \Vert L_i\Vert<\infty$. Similarly the symbol $\XX_i\approx \YY_i$ for $i\in I$ means that Banach-Mazur distance from $\XX_i$ to $\YY_i$ is uniformly bounded. We write $\XX\oplus\YY$ for the Cartesian product of the Banach spaces $\XX$ and $\YY$ endowed with the norm 
\[
\Vert (x,y)\Vert=\max\{ \Vert x\Vert, \Vert y\Vert\}, \quad x\in\XX,\ y\in\YY. 
 \]
 The closed linear span of a family $(x_i)_{i\in I}$ in a Banach space will be denoted by $[x_i \colon i\in I]$. A basis always will be a Schauder basis. Given a basis $\BB=(\xx_j)_{j=1}^\infty$ for a Banach space $\XX$ and $m\in\NN$ we denote by $S_m[\BB]$ the $m$-th partial sum projection, i.e.,
\[
S_m[\BB](f)=\sum_{j=1}^m \xx_j^*(f) \, \xx_j, \quad f\in\XX.
\]
The \textit{fundamental function} (or \textit{upper democracy function}) of  a basis $\BB$ is the sequence
\[
\textstyle
\phi_m[\BB]=\sup_{|A|\le m}\Vert \sum_{j\in A} \xx_j\Vert
\]
If there is a constant $C$ such that $\phi_m[\BB]\le C \sum_{j\in A} \Vert \sum_{j\in A} \xx_j\Vert$ whenever  $m\le |A|$ the basis is said to be \textit{democratic}.

For $1\le p\le\infty$ and $d\in\NN$,  the finite-dimensional space $\ell_p^d$, will be the subspace of $\ell_p$ consisting of all sequences $(a_j)_{j=1}^\infty$ with $a_j=0$ for $j>d$. More generally, given a basis $\BB=(\xx_n)_{n=1}^\infty$ for a Banach space $\XX$ and  $d\in\NN$ we will consider the closed linear span of the truncated finite sequence $(x_{j})_{j=1}^{d}$, i.e.,
\[
\XX^d[\BB]=[\xx_j \colon 1\le j \le d].
\]

Other more specific notation will be introduced on the spot when needed.

\section{Background and preliminary results}\label{Preliminaries}

\noindent Most of the ideas behind the results we include in this preliminary section have appeared more or less explicitly  in the literature before. Nonetheless, for the sake of clarity and completeness, we shall include the statements of the results we need in the form that best suits our purposes and  the sketches of their proofs. 

\begin{definition} Given a basis $\BB$ for a Banach space $\XX$,
we define the {\it w-conditionality constants}  of $\BB$ by
\begin{equation*}
L_m[\BB]=\sup\left\{ \frac{ \Vert S_A[\BB](f)\Vert}{ \Vert f \Vert } \colon \max(\supp(f))\le m, \, A\subseteq\NN\right\},\qquad m\in \NN.
\end{equation*}
\end{definition}
Notice that $\BB$ is unconditional if and only if $\sup_m L_m[\BB]<\infty$. Hence, the growth of the sequence $(L_m[\BB])_{m=1}^{\infty}$ provides also a measure of the conditionality of the basis. Since $L_m[\BB]\le k_m[\BB]$ for all $m\in\NN$, any  result establishing that the size of the members of the sequence $(L_m[\BB])_{m=1}^\infty$ is large, is   a stronger statement than the corresponding one enunciated in terms of 
$(k_m[\BB])_{m=1}^\infty$. The papers \cites{AAW2017,GW2014} draw attention to the fact that, in some cases, the w-conditionality constants are more fit than the ``usual'' conditionality constants for transferring conditionality properties from a given basis to a basis constructed from it. This is the reason why we establish all the instrumental results of this section  in terms of  w-conditionality constants. Notice that, in contrast  to $(k_m[\BB])_{m=1}^\infty$,  the sequence $(L_m[\BB])_{m=1}^\infty$ is not necessarily doubling. This fact  leads us to use a doubling function in our statements. 
Recall that a function $\delta\colon[0,\infty)\to[0,\infty)$  is said to be \textit{doubling} if for some non-negative constant $C$ one has $\delta(2t)\le C \delta(t)$ for all $t\ge 0$. 

Given sequences $\BB_0=(\xx_j)_{j=1}^\infty$ and $\BB_1=(\yy_j)_{j=1}^\infty$ in Banach spaces $\XX$ and $\YY$, respectively,  their \textit{direct sum} $\BB_0 \oplus \BB_1$ will be the sequence in $\XX\times\YY$ given by
\begin{equation*}
\BB_0 \oplus \BB_1=((\xx_1,0),(0,\yy_1),(\xx_2,0),(0,\yy_2),\dots,((\xx_j,0),(0,\yy_j),\dots).
\end{equation*}
\begin{lemma}
[cf. \cite{GHO2013}]
\label{LemmaOne} Let  $\delta\colon[0,\infty)\to[0,\infty)$ be a  doubling increasing  function. Suppose that $\BB_0=(\xx_j)_{j=1}^\infty$ and $\BB_1=(\yy_j)_{j=1}^\infty$ are bases in Banach spaces $\XX$ and $\YY$, respectively, and assume that with $L_m[\BB_0]\gtrsim \delta(m)$ for $m\in\NN$. Then $\BB_1 \oplus \BB_2$ is a basis of $\XX\oplus\YY$ fulfilling 
$L_m[\BB_0\oplus\BB_1]\gtrsim \delta(m)$ for $m\in\NN$. Moreover we have:
 \begin{enumerate} 
 \item[(a)] If $\BB_0$ and $\BB_1$ are quasi-greedy so is $\BB_0\oplus\BB_1$;
 \item[(b)] If $\BB_0$ and $\BB_1$ are  democratic bases with $\phi_m[\BB_0] \approx\phi_m[\BB_1]$ for $m\in\NN$ then $\BB_0\oplus\BB_1$ is democratic with $\phi_m[\BB_0\oplus\BB_1]\approx\phi_m[\BB_0] \approx\phi_m[\BB_1]$
for $m\in\NN$.
\end{enumerate}\end{lemma}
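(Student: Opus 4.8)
The plan is to exploit the interleaved structure of $\BB_0\oplus\BB_1=(\zz_n)_{n=1}^\infty$, where $\zz_{2j-1}=(\xx_j,0)$ and $\zz_{2j}=(0,\yy_j)$, and to read off each property one coordinate at a time. First I would check that $\BB_0\oplus\BB_1$ is a Schauder basis of $\XX\oplus\YY$: its coordinate functionals are $\zz_{2j-1}^*(f,g)=\xx_j^*(f)$ and $\zz_{2j}^*(f,g)=\yy_j^*(g)$, the linear span of $(\zz_n)_n$ is dense because it contains the algebraic product of the two dense spans, and $S_{2m}[\BB_0\oplus\BB_1]$ acts on $(f,g)$ as $(S_m[\BB_0](f),S_m[\BB_1](g))$ while $S_{2m-1}[\BB_0\oplus\BB_1]$ acts as $(S_m[\BB_0](f),S_{m-1}[\BB_1](g))$; since $\XX\oplus\YY$ carries the max norm, all of these projections are bounded by the larger of the two basis constants. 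The same observation about the max norm shows that $\BB_0\oplus\BB_1$ is semi-normalized whenever $\BB_0$ and $\BB_1$ are.

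For the lower estimate on the w-conditionality constants, fix $f\in\XX$ with $\max(\supp f)\le m$ (support relative to $\BB_0$) and $A\subseteq\NN$, and set $A'=\{2j-1:j\in A\}$. Then $(f,0)$ has support relative to $\BB_0\oplus\BB_1$ contained in $\{1,3,\dots,2m-1\}$, so $\max(\supp(f,0))\le 2m$, and $S_{A'}[\BB_0\oplus\BB_1](f,0)=(S_A[\BB_0](f),0)$, whence $\|S_{A'}[\BB_0\oplus\BB_1](f,0)\|/\|(f,0)\|=\|S_A[\BB_0](f)\|/\|f\|$. Taking the supremum over all such $f$ and $A$ gives $L_{2m}[\BB_0\oplus\BB_1]\ge L_m[\BB_0]\gtrsim\delta(m)$. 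To obtain a bound at every index I would use that $(L_n[\BB])_n$ is nondecreasing together with the monotonicity and doubling of $\delta$: for $n\ge 2$, $L_n[\BB_0\oplus\BB_1]\ge L_{2\lfloor n/2\rfloor}[\BB_0\oplus\BB_1]\gtrsim\delta(\lfloor n/2\rfloor)\gtrsim\delta(n)$, the last step because $n\le 4\lfloor n/2\rfloor$ and $\delta$ doubles; for $n=1$ the bound is trivial since $L_1[\BB_0\oplus\BB_1]\ge 1$. This is precisely where the doubling hypothesis on $\delta$ enters, since $(L_n[\BB])_n$ itself need not be doubling.

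For (a), given $F=(f,g)$ and a set $A\subseteq\NN$ with $|\zz_n^*(F)|\le|\zz_k^*(F)|$ for all $n\in\NN\setminus A$ and $k\in A$, put $A_0=\{j:2j-1\in A\}$ and $A_1=\{j:2j\in A\}$. Comparing coordinates shows that the greedy admissibility condition holds for $A_0$ and $f$ relative to $\BB_0$, and likewise for $A_1$ and $g$ relative to $\BB_1$, while $F-S_A[\BB_0\oplus\BB_1](F)=(f-S_{A_0}[\BB_0](f),\,g-S_{A_1}[\BB_1](g))$. The quasi-greedy inequalities for $\BB_0$ and $\BB_1$ and the max norm then give $\|F-S_A[\BB_0\oplus\BB_1](F)\|\le\max\{C_0\|f\|,C_1\|g\|\}\le\max\{C_0,C_1\}\|F\|$, so $\BB_0\oplus\BB_1$ is quasi-greedy.

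For (b), the same correspondence $A\mapsto(A_0,A_1)$ gives $\|\sum_{n\in A}\zz_n\|=\max\{\|\sum_{j\in A_0}\xx_j\|,\|\sum_{j\in A_1}\yy_j\|\}$ with $|A_0|,|A_1|\le|A|$. Taking suprema over $|A|\le m$ and invoking $\phi_m[\BB_0]\approx\phi_m[\BB_1]$ bounds $\phi_m[\BB_0\oplus\BB_1]$ above by a multiple of $\phi_m[\BB_0]$; testing on sets consisting only of odd indices gives the reverse inequality, so $\phi_m[\BB_0\oplus\BB_1]\approx\phi_m[\BB_0]\approx\phi_m[\BB_1]$. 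Finally, if $|A|\ge m$ then $|A_0|\ge m/2$ or $|A_1|\ge m/2$; applying the democracy of the relevant $\BB_i$, the (automatic) doubling of its fundamental function, and once more $\phi_m[\BB_0]\approx\phi_m[\BB_1]$, one gets $\phi_m[\BB_0\oplus\BB_1]\lesssim\|\sum_{n\in A}\zz_n\|$, which is the democracy of $\BB_0\oplus\BB_1$. Throughout, the argument is essentially bookkeeping; the one genuinely delicate point is the passage from the even-index bound $L_{2m}$ to a bound at all indices, which is what obliges us to phrase the hypothesis with an external doubling, increasing majorant $\delta$ rather than with $(L_n[\BB_0\oplus\BB_1])_n$ directly.
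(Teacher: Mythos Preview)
Your argument is correct and complete. The paper does not actually prove this lemma: it simply states that the proof is similar to \cite{GHO2013}*{Proposition 6.1} and omits it, so there is no in-paper approach to compare against; your coordinatewise bookkeeping and your use of the doubling hypothesis on $\delta$ to pass from the estimate $L_{2m}[\BB_0\oplus\BB_1]\ge L_m[\BB_0]$ to a bound at every index are exactly the expected route.
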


\begin{proof} It is similar to the proof of \cite{GHO2013}*{Proposition 6.1}, so we omit it. \end{proof}

Our next lemma follows an idea from \cite{Wo2000} for constructing quasi-greedy bases. To state it properly it will be convenient to introduce some additional notation. 

Let $\BB= (\xx_j)_{j=1}^\infty$ be a basis in a Banach space $\XX$. Given a sequence of positive integers  $(d_n)_{n=1}^\infty$ we define a sequence $(\zz_k)_{k=1}^\infty$ that we denote $\bigoplus_{n=1}^\infty \BB[d_n]$ in the space $\XX^\NN$    by 
\begin{equation*}
\zz_k=(\underbrace{0,\dots,0}_{r-1 \text{ times}},\xx_j,0,\dots,0,\dots),
\end{equation*}
where, for a given $k\in\NN$, the integers $r$ and $j$ are univocally determined by the relations $k=j+\sum_{n=1}^{r-1} d_n$ and $1\le j\le d_n$.

\begin{lemma}\label{LemmaTwo} Suppose $p\in \{0\}\cup[1,\infty)$. Let $(d_n)_{n=1}^\infty$ be a sequence of positive integers,  $\delta\colon[0,\infty)\to[0,\infty)$ be a  doubling increasing  function, and $\BB=(\xx_j)_{j=1}^\infty$ be a basis for a Banach space $\XX$.  Assume that $L_m[\BB]\gtrsim \delta(m)$ for $m\in\NN$ and that there is $D>1$ such that $d_n\approx D^n$ for $n\in\NN$.
Then:
\begin{enumerate} 
\item[(a)] $\BB_0=\bigoplus_{n=1}^\infty \BB[d_n]$ is a basis for  the Banach space $(\bigoplus_{n=1}^\infty \XX^{d_n}[\BB])_p$ with 
$L_m[\BB_0] \gtrsim \delta(m)$ for $m\in\NN$. 
\item[(b)] If $\BB$ is quasi-greedy so is $\BB_0$.
\item[(c)] If $p\not=0$ and $\BB$ is democratic with $\phi_m[\BB]\approx m^{1/p}$ for $m\in\NN$ then $\BB_0$ is democratic with $\phi_m[\BB_0]\approx m^{1/p}$ for $m\in\NN$.
\end{enumerate}
 \end{lemma}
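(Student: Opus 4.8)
The strategy is to realize $\BB_0$ as a direct sum (in the $\ell_p$- or $c_0$-sense) of the finite bases $\BB^{d_n}:=(\xx_j)_{j=1}^{d_n}$ of the finite-dimensional spaces $\XX^{d_n}[\BB]$, and then push the three assertions through the blocking. For part (a), the fact that $\BB_0$ is a basis for $(\bigoplus_n \XX^{d_n}[\BB])_p$ is the standard statement that a direct sum of bases of the summands, ordered by listing the first $d_1$ vectors of the first block, then the $d_2$ vectors of the second block, and so on, is a Schauder basis of the $\ell_p$- (resp.\ $c_0$-) sum; the basis constant of $\BB_0$ is bounded by the supremum of the basis constants of $\BB^{d_n}$, which is at most the basis constant of $\BB$ itself. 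For the lower bound on the w-conditionality constants, I would fix $m$ and, using $L_m[\BB]\gtrsim\delta(m)$, pick $f\in\XX$ with $\max(\supp f)\le m$ and $A\subseteq\NN$ with $\Vert S_A[\BB](f)\Vert\gtrsim\delta(m)\Vert f\Vert$. Since $d_n\approx D^n$ with $D>1$, for every $m$ there is an $n=n(m)$ with $d_n\ge m$ and $n(m)$ essentially $\log_D m$; put this $f$ into the $n$-th coordinate of the direct sum, obtaining $\hat f\in(\bigoplus \XX^{d_k}[\BB])_p$ supported in the first block of length at most $m$ (after re-indexing, on coordinates $\le m + \sum_{k<n}d_k$). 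The catch is that $\max(\supp\hat f)$ in the global ordering is roughly $\sum_{k\le n(m)}d_k\approx D^{n(m)}\approx m$, so $\hat f$ is supported on the first $\approx m$ vectors of $\BB_0$, and applying $S_{\hat A}[\BB_0]$ for the copy $\hat A$ of $A$ inside that block reproduces $S_A[\BB](f)$ in that coordinate, giving $\Vert S_{\hat A}[\BB_0](\hat f)\Vert \gtrsim \delta(m)\Vert\hat f\Vert$. Finally, using that $\delta$ is doubling and increasing one absorbs the multiplicative constant hidden in $\max(\supp\hat f)\approx m$, i.e.\ $\delta(Cm)\le C'\delta(m)$, to conclude $L_m[\BB_0]\gtrsim\delta(m)$.

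For part (b), I would invoke the characterization of quasi-greediness that is stable under $\ell_p$- and $c_0$-sums of uniformly quasi-greedy bases: if $\BB$ is quasi-greedy with constant $C$, then each truncated basis $\BB^{d_n}$ is quasi-greedy with the same constant $C$ (restricting the greedy inequality to finitely supported vectors changes nothing), and a direct sum in the $\ell_p$-sense ($0$-sense if $p=0$) of uniformly quasi-greedy bases is quasi-greedy — this is exactly the mechanism behind Wojtaszczyk's construction in \cite{Wo2000}, and the quantitative version can be extracted as in \cite{GW2014} or \cite{AAW2017}: given $\hat f$ and a set $B$ with $|\xx_k^*(\hat f)|$ small off $B$, one applies the greedy inequality coordinatewise in each block — because the greedy selection respects the block structure only up to the fact that the thresholding is global, so one needs the standard trick of comparing $\Vert \hat f - S_B[\BB_0]\hat f\Vert^p = \sum_n \Vert f_n - S_{B_n}[\BB^{d_n}] f_n\Vert^p \le C^p\sum_n\Vert f_n\Vert^p = C^p\Vert\hat f\Vert^p$, valid precisely because the $\ell_p$-sum lets the per-block estimates add up cleanly (and with $\sup$ in place of sums when $p=0$). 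I expect this to be the routine part once the block decomposition is in place.

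For part (c), assume $p\ne 0$ and $\BB$ democratic with $\phi_m[\BB]\approx m^{1/p}$. I would first note $\phi_m[\BB^{d_n}]\le\phi_m[\BB]\lesssim m^{1/p}$ uniformly in $n$, with the reverse bound $\phi_m[\BB^{d_n}]\gtrsim m^{1/p}$ holding as soon as $d_n\ge m$. Then, for a finite $A\subseteq\NN$ with $|A|=m$, split $A$ according to the blocks it meets, $A=\bigsqcup_n A_n$ with $m_n:=|A_n|$, $\sum_n m_n=m$; using the $\ell_p$-norm of the direct sum together with the uniform two-sided estimate $\Vert\sum_{j\in A_n}\xx_j\Vert\approx m_n^{1/p}$ (valid up to the constants from democracy of $\BB$, and with the caveat that the lower bound needs $d_n\ge m_n$, which is automatic since $A_n\subseteq\{1,\dots,d_n\}$), one gets $\Vert\sum_{k\in A}\zz_k\Vert^p \approx \sum_n m_n^{1/p\cdot p}=\sum_n m_n = m$, hence $\phi_m[\BB_0]\approx m^{1/p}$; democracy of $\BB_0$ follows since the two-sided estimate $\Vert\sum_{k\in A}\zz_k\Vert\approx|A|^{1/p}$ is independent of the particular set $A$.

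**Main obstacle.** The one genuinely delicate point is the bookkeeping in part (a): one must make sure the vector $\hat f$ realizing the lower bound for $L_m[\BB_0]$ is supported on the \emph{initial segment} of $\BB_0$ of length comparable to $m$ — this is where $d_n\approx D^n$ (geometric growth) is essential, since it forces $\sum_{k\le n}d_k\approx d_n$ and $d_n\approx m$ for a suitable $n$, so the support of $\hat f$ sits in the first $O(m)$ vectors — and then that the doubling and monotonicity of $\delta$ let one replace $\delta(O(m))$ by a constant times $\delta(m)$. Everything else (the basis property, the quasi-greedy stability, the democracy estimate) is the standard $\ell_p$-sum machinery, carried out uniformly over the blocks.
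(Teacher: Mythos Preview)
Your proposal is correct and follows essentially the same approach as the paper, which in fact derives parts (a) and (b) of Lemma~\ref{LemmaTwo} as the special case $P_n=\Id$, $Q_n=0$ of the more general Lemma~\ref{LemmaThree} (part (c) is not proved separately in the paper). The only notable difference is the bookkeeping in part~(a): instead of choosing the smallest $n$ with $d_n\ge m$ and embedding a witness for $L_m[\BB]$ there, the paper picks the largest $r$ with $\sum_{n\le r} d_n\le m$ and embeds a witness for $L_{d_r}[\BB]$ in the $r$-th block, so that the support automatically lies in the first $m$ coordinates of $\BB_0$ and one obtains $L_m[\BB_0]\ge L_{d_r}[\BB]\gtrsim\delta(d_r)\gtrsim\delta(m)$ directly, avoiding your final substitution $m\mapsto m/C$.
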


Lemma~\ref{LemmaTwo} is stated with sufficient generality to meet our goals in most cases. However, in one specific case
we will need a more  technical
 auxiliary result which includes Lemma~\ref{LemmaTwo} as a particular case. 
 
 Let $(d_n)_{n=1}^\infty$ be a sequence of positive integers. Suppose that $\BB=(\xx_j)_{j=1}^{\infty}$ is a basis in a Banach space $\XX$ and that, for each $n\in\NN$, $P_n$ and $Q_n$ are linear maps from $\XX^{d_n}[\BB]$  into the  (possibly zero) Banach spaces $\YY_n$ and $\ZZ_n$, respectively. We define a sequence $\bigoplus_{n=1}^\infty \BB[P_n,Q_n]=(\zz_k)_{k=1}^\infty$ in $\Pi_{n=1}^\infty \YY_n \times \Pi_{n=1}^\infty \ZZ_n$ by
\begin{align*}
\zz_k&=
((\underbrace{0,\dots,0}_{r-1 \text{ times}},P_n(\xx_j),0,\dots,0,\dots),
(\underbrace{0,\dots,0}_{r-1 \text{ times}},Q_n(\xx_j),0,\dots,0,\dots)),\\
&k=j+\sum_{n=1}^{r-1} d_n, \, 1\le j\le d_n.
\end{align*}
\begin{lemma}\label{LemmaThree} Suppose $p,q\in \{0\}\cup[1,\infty)$. Let $(d_n)_{n=1}^\infty$ be  a sequence of positive integers, $\delta\colon[0,\infty)\to[0,\infty)$ be a  doubling increasing  function, $(\YY_n)_{n=1}^\infty$ and $(\ZZ_n)_{n=1}^\infty$ be sequences of Banach spaces, 
 $\BB=(\xx_n)_{n=1}^\infty$ be a basis for a Banach space $\XX$, and $(P_n)_{n=1}^\infty$ and $(Q_n)_{n=1}^\infty$ be sequences of linear maps. Assume that 
\begin{itemize}
\item For each $n\in\NN$, $(P_n,Q_n)$ is an isomorphism from  $\XX^{d_n}[\BB]$ onto $\YY_n\oplus \ZZ_n$,
and $\sup_n \Vert (P_n,Q_n)\Vert  \Vert (P_n,Q_n)^{-1}\Vert<\infty$,
\item there is $D>0$ such that $d_n\approx D^n$ for $n\in\NN$, and
\item $L_m[\BB]\gtrsim \delta(m)$ for $m\in\NN$.
\end{itemize}
Then $\BB_0=\bigoplus_{n=1}^\infty \BB[P_n,Q_n]$ is a basis for  $(\bigoplus_{n=1}^\infty \YY_n)_p \oplus(\bigoplus_{n=1}^\infty \ZZ_n)_q$ with  $L_m[\BB_0] \gtrsim \delta(m)$. Moreover, if $\BB$ is quasi-greedy so is $\BB_0$.
\end{lemma}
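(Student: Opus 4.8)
The plan is to reduce Lemma~\ref{LemmaThree} to Lemma~\ref{LemmaTwo} by transporting the problem along the uniform isomorphisms $(P_n,Q_n)$. First I would observe that the hypothesis gives, for each $n$, that $(P_n,Q_n)\colon\XX^{d_n}[\BB]\to\YY_n\oplus\ZZ_n$ is an isomorphism with $\sup_n\Vert(P_n,Q_n)\Vert\,\Vert(P_n,Q_n)^{-1}\Vert<\infty$; consequently the spaces $\YY_n\oplus\ZZ_n$ are uniformly isomorphic to the spaces $\XX^{d_n}[\BB]$ via the maps carrying the finite basic sequence $(\xx_j)_{j=1}^{d_n}$ to $(P_n(\xx_j),Q_n(\xx_j))_{j=1}^{d_n}$. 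Summing in the appropriate senses, one gets a single isomorphism
\[
T\colon \Bigl(\bigoplus_{n=1}^\infty \XX^{d_n}[\BB]\Bigr)_{?} \longrightarrow \Bigl(\bigoplus_{n=1}^\infty \YY_n\Bigr)_p \oplus \Bigl(\bigoplus_{n=1}^\infty \ZZ_n\Bigr)_q,
\]
but here is the first subtlety: the left-hand side is \emph{not} naturally a $\ell_p$- or $\ell_q$-sum of the $\XX^{d_n}[\BB]$, because on each block $\YY_n$ contributes an $\ell_p$-coordinate and $\ZZ_n$ an $\ell_q$-coordinate. So the correct domain is $(\bigoplus_{n=1}^\infty (\YY_n\oplus\ZZ_n))$ with the mixed outer norm $\Vert((y_n),(z_n))\Vert=\max\{\Vert(y_n)\Vert_p,\Vert(z_n)\Vert_q\}$, and the block maps $(P_n,Q_n)^{-1}$ identify this with a space built from $(\XX^{d_n}[\BB])_n$ equipped with a block-dependent norm. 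The point is that $T$ is a uniformly bounded isomorphism that is \emph{diagonal with respect to the block decomposition}, i.e. it maps $\XX^{d_n}[\BB]$ onto $\YY_n\oplus\ZZ_n$ for each $n$; this is exactly the structure needed to transfer $\BB_0=\bigoplus_{n=1}^\infty\BB[P_n,Q_n]$ to $\bigoplus_{n=1}^\infty\BB[d_n]$ (in the sense of Lemma~\ref{LemmaTwo}) as a \emph{permutatively equivalent} basis, up to a uniform constant.

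Granting that, the three conclusions follow in order. For the basis property and the lower bound $L_m[\BB_0]\gtrsim\delta(m)$: since $T$ is an isomorphism carrying $\BB_0$ onto the basis $\bigoplus_{n=1}^\infty\BB[d_n]$ of $(\bigoplus\XX^{d_n}[\BB])_p$ \emph{blockwise}, and since $L_m[\cdot]$ is stable (up to a multiplicative constant) under isomorphisms that respect the ordering of the basis — here the key is that $T$ maps each initial segment span onto an initial segment span and hence intertwines the coordinate projections $S_A$ — the estimate $L_m[\bigoplus\BB[d_n]]\gtrsim\delta(m)$ from Lemma~\ref{LemmaTwo}(a) transfers directly to $L_m[\BB_0]$. (One has to be slightly careful that $L_m$ involves $\max(\supp f)\le m$, so only the ordering of the basis matters, which is preserved.) For the quasi-greedy conclusion: quasi-greediness is preserved under equivalence of bases up to a constant, and more to the point, the argument of Lemma~\ref{LemmaTwo}(b) — which one would reprove or invoke in the slightly more general block setting — shows that an $\ell_p$/$\ell_q$-type sum (even a mixed one) of copies of a quasi-greedy basis, over blocks of geometrically growing but arbitrary finite length, is again quasi-greedy, because greedy sums decompose across the independent blocks and within each finite block one controls the tail via the quasi-greedy constant of $\BB$ restricted to $\XX^{d_n}[\BB]$, uniformly in $n$.

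The main obstacle, and where I would spend the most care, is making precise the transference of $L_m$ and of quasi-greediness through the \emph{mixed-norm} outer structure: unlike in Lemma~\ref{LemmaTwo}, the two families of coordinates $(\YY_n)$ and $(\ZZ_n)$ are summed with different exponents $p\ne q$ in general, so one cannot simply cite Lemma~\ref{LemmaTwo} verbatim but must rerun its proof with $(\bigoplus_n(\YY_n\oplus\ZZ_n))$ in place of $(\bigoplus_n\XX^{d_n})_p$. The saving grace is that the proof of Lemma~\ref{LemmaTwo} only uses: (i) that the outer norm is a \emph{lattice} norm in which the blocks are $1$-unconditional coordinates — true for the mixed norm $\max\{\Vert\cdot\Vert_p,\Vert\cdot\Vert_q\}$ as well; (ii) the uniform finite-dimensional control of $\BB$ on $\XX^{d_n}[\BB]$; and (iii) the geometric growth $d_n\approx D^n$ to absorb the per-block constants. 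Since all three survive the passage to the mixed norm, the proof of Lemma~\ref{LemmaTwo} adapts mutatis mutandis, and I would present Lemma~\ref{LemmaThree} as exactly that adaptation, remarking that taking $\ZZ_n=0$ (so $q$ is irrelevant) and $\YY_n=\XX^{d_n}[\BB]$ with $(P_n,Q_n)=\Id$ recovers Lemma~\ref{LemmaTwo}.
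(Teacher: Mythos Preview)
Your plan is essentially the paper's own proof: the paper defines exactly the intermediate mixed-norm space you describe (calling it $\VV$), observes that the block-diagonal map $(f_n)\mapsto((P_n(f_n)),(Q_n(f_n)))$ is an isomorphism onto the target, and then works with $\BB_1=\bigoplus_n\BB[d_n]$ inside $\VV$ directly---claiming the quasi-greedy constant transfers blockwise and deriving $L_m[\BB_1]\gtrsim\delta(m)$ from $L_m[\BB_1]\ge L_{d_r}[\BB]$ together with the geometric growth $d_n\approx D^n$ and the doubling of $\delta$. Note that in the paper Lemma~\ref{LemmaTwo} has no separate proof (it is deduced as a special case of Lemma~\ref{LemmaThree}), so where you say ``rerun the proof of Lemma~\ref{LemmaTwo} in the mixed setting'' you are in fact describing the paper's proof of Lemma~\ref{LemmaThree} itself.
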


\begin{proof}Let $C_1$ be such that $C_1 L_m[\BB]\ge  \delta(m)$ for all $m\in\NN$. Let $C_2$ and $C_3$ be such that $d_n\le C_2 D^n$ and $D^n \le C_3 d_n$ for all $n\in\NN$ and put $C_4=C_2 C_3 D^2/(D-1)$.  Since $\delta$ is doubling there a constant $C_5$ such that  $C_5 \delta(m)\ge \delta (C_4 m)$ for all $m\in\NN$. Replacing $\ell_0$ with $c_0$ and $\Vert\cdot\Vert_0$ with $\Vert \cdot\Vert_\infty$ when some of the indices involved is $0$, we consider the Banach space
\begin{align*}
\VV=\{(f_n)_{n=1}^\infty &\in  \Pi_{n=1}^\infty \XX^{d_n}[\BB] 
\colon   (\Vert P_n(f_n)\Vert)_{n=1}^\infty \in \ell_p, \, (\Vert Q_n(f_n)\Vert)_{n=1}^\infty \in \ell_q\},
\end{align*}
equipped with the norm 
\[
\Vert (f_n)_{n=1}^\infty\Vert =\max\{ \Vert (\Vert P_n(f_n)\Vert)_{n=1}^\infty\Vert_p, (\Vert P_n(f_n)\Vert)_{n=1}^\infty\Vert_q\}.
\]
Since the mapping
\[
\textstyle
f=(f_n)_{n=1}^\infty \mapsto ((P_n(f_n))_{n=1}^\infty, (Q_n(f_n))_{n=1}^\infty)
\]
is an isomorphism from $\VV$ onto  $(\bigoplus_{n=1}^\infty \YY_n)_p \oplus(\bigoplus_{n=1}^\infty \ZZ_n)_q$, it suffices to show that $\BB_1:=\bigoplus_{n=1}^\infty \BB[d_n]$ is a quasi-greedy basis for $\VV$ verifying
$L_m[\BB_1] \gtrsim \delta(m)$ for $m\in\NN$. It is clear that $\BB_1$ is a quasi-greedy basis with the same quasi-greedy constant as $\BB$, hence we must only take care of estimating its w-conditionality constants. 
Given $m\in\NN$, put $r\in\NN$ such that
$\sum_{n=1}^{r} d_n\le m <\sum_{n=1}^{r+1} d_n$. Since 
\[
m\le\sum_{n=1}^{r+1} d_n \le  C_2 \sum_{n=1}^{r+1} D^{r}=\frac{C_2 D}{D-1} (D^{r+1}-1) \le \frac{C_2 C_3 D^2}{D-1} d_r=C_4 d_r,
\]
we have
\[
C_1 C_5 L_m[\BB_1]\ge C_1 C_5 L_{d_r}[\BB]\ge  C_5\delta(d_r)\ge \delta(C_4 d_r)\ge  \delta(m),
\]
as desired. \end{proof}

Next, we appeal to a technique invented by Garrig\'os and Wojtaszczyk in \cite{GW2014}  for tailoring a basis $\GOW(\BB)$ for the direct sum $\XX\oplus\ell_2$ from a basis $\BB$ of  a Banach space of $\XX$.  This method, called for short the \textit{GOW-method} in \cite{AAW2017}, can be  summarized as follows.

\begin{theorem}[\cite{GW2014}]\label{ConditionalityGOW} Let  $\delta\colon[0,\infty)\to[0,\infty)$ be a  doubling increasing  function. Suppose  that $\BB$  is a basis of a Banach space $\XX$ such that $L_m[\BB] \ge \delta(m) $ for $m\in\NN$. Then
\begin{enumerate}
\item[(i)] The basis $\BB_0:=\GOW(\BB)$ is an almost greedy basis of $\YY:=\XX\oplus\ell_2$,
\item[(ii)]  $\phi_m[\BB_0]\approx m^{1/2}$,  and 
\item[(iii)] $L_m[\BB_0] \gtrsim  \delta(\log m)$ for all $m\in\NN$. 
\end{enumerate}
Moreover 
\[
\YY^{2^n-2}[\BB_0] =\XX^n[\BB] \oplus \ell_2^{2^n-n-2}, \quad n\in\NN.
\]
\end{theorem}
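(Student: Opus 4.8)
The plan is to reconstruct the GOW-method explicitly and then verify the four assertions of Theorem~\ref{ConditionalityGOW} by direct inspection of the construction. Recall that the GOW-construction proceeds by blocking: one partitions $\NN$ into consecutive intervals $(I_n)_{n=1}^\infty$ with $|I_n|=n$, so that $\bigcup_{k\le n} I_k$ has cardinality $\binom{n+1}{2}$, and on the $n$-th block one replaces the $n$ basis vectors $(\xx_j)_{j\in J_n}$ coming from $\XX$ (where $J_n$ is the corresponding block of the first index set) together with an auxiliary batch of $\ell_2$-vectors by a new system obtained via a carefully chosen invertible "mixing" matrix — concretely, a matrix that on the $\XX$-part acts like a unipotent (difference-type) operator and on the $\ell_2$-part is an isometry, glued so that the whole block map is uniformly well-conditioned. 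Thus the first step is to fix notation for the blocks, record that $\YY^{2^n-2}[\BB_0]$ is spanned by the first $n$ blocks, count dimensions to get $\binom{n+1}{2}$ on the $\XX$-side — wait, the stated identity is $\YY^{2^n-2}[\BB_0]=\XX^n[\BB]\oplus\ell_2^{2^n-n-2}$, so in fact the blocks must be chosen with $|I_n|$ growing geometrically rather than linearly: the $n$-th block contributes exactly one new coordinate to $\XX$ and $2^{n-1}-1$ new coordinates to $\ell_2$, so that after $n$ blocks one has $n$ coordinates in $\XX$ and $\sum_{k=1}^n(2^{k-1}-1)=2^n-n-1$ coordinates in $\ell_2$, total $2^n-1$; adjusting the indexing by the first vector gives $2^n-2$. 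So the first step is really to set up this geometric blocking and pin down the block maps.

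Second, I would establish the "Moreover" clause, which is the structural backbone: by construction each block map $T_n$ is a linear isomorphism from $\XX^n[\BB]\oplus\ell_2^{2^{n-1}-1}$ onto itself (or onto the span of the $n$-th batch of $\GOW(\BB)$ vectors), with $\sup_n\Vert T_n\Vert\Vert T_n^{-1}\Vert<\infty$; since the blocks are telescoping/nested and the $T_n$ respect the filtration, the span of the first $2^n-2$ vectors of $\BB_0$ is precisely the image of $\XX^n[\BB]\oplus\ell_2^{2^n-n-2}$, and because the block maps are uniformly bounded with uniformly bounded inverses this image is, with uniform constants, just $\XX^n[\BB]\oplus\ell_2^{2^n-n-2}$ itself. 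This also shows $\BB_0$ is a Schauder basis for $\YY=\XX\oplus\ell_2$ (each finite-dimensional piece is handled by an isomorphism, and the pieces exhaust $\YY$). Item (ii), that $\phi_m[\BB_0]\approx m^{1/2}$, then follows because the bulk of the coordinates live in the $\ell_2$-part where the fundamental function is $\approx m^{1/2}$, the $\ell_2$-block maps are isometries (so they do not distort sums of basis vectors), and the finitely many $\XX$-coordinates per block contribute at most a bounded multiplicative perturbation; one checks the two inequalities $\phi_m[\BB_0]\lesssim m^{1/2}$ and $\phi_m[\BB_0]\gtrsim m^{1/2}$ by choosing $A$ inside the $\ell_2$-blocks for the lower bound and splitting an arbitrary $A$ into its $\XX$- and $\ell_2$-portions for the upper bound.

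Third, for item (i) — that $\BB_0$ is almost greedy — I would invoke the characterization of almost greedy bases as those that are simultaneously quasi-greedy and democratic (Dilworth–Kalton–Kutzarova–Temlyakov, cited in the excerpt). Democracy is immediate from (ii), since a basis with $\phi_m\approx m^{1/2}$ is automatically democratic once one also knows the lower democracy function matches, which again comes from the $\ell_2$-block structure. Quasi-greediness is the substantive point: one must show that greedy projections $S_A[\BB_0]$ are uniformly bounded on greedy sets. This is where the design of the block maps pays off — the $\XX$-part of each block map is a triangular/difference operator precisely so that greedy selection within a block, after transporting through $T_n^{-1}$, corresponds to a coordinate projection on $\XX$ whose norm is controlled by $L_m[\BB]$ on a block of size $n\approx\log(2^n)$, while the $\ell_2$-part, being unconditional, contributes nothing; the cross terms between blocks are handled by the $\ell_2$-direct-sum structure (an $\ell_2$-sum of uniformly quasi-greedy pieces with matching fundamental functions is quasi-greedy). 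Item (iii), $L_m[\BB_0]\gtrsim\delta(\log m)$, is then read off from the same computation: taking $m\approx 2^n$ and a vector supported on the first $n$ blocks whose $\XX$-part witnesses $L_n[\BB]\ge\delta(n)$, applying the appropriate coordinate projection in the $\BB_0$-coordinates produces a vector of norm $\gtrsim\delta(n)\approx\delta(\log m)$ times the original, because the block map $T_n$ transports the bad coordinate projection of $\BB$ to a coordinate projection of $\BB_0$.

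The main obstacle, and the part that requires genuine care rather than bookkeeping, is the quasi-greediness in item (i): one has to verify that when a greedy set $A$ cuts through a block in an arbitrary way, the induced operation on the $\XX$-coordinates of that block is still of the form "coordinate projection composed with a uniformly bounded operator," so that it is controlled by $L_{n}[\BB]$ and hence by the logarithmic bound of Theorem~\ref{CharacterizationSR}(a) — and simultaneously that summing the block-wise estimates in the $\ell_p$-type ($p=2$) norm does not accumulate. Concretely, the delicate point is that greedy ordering on $\BB_0$ need not respect the block decomposition in any simple way, so one must use the uniform well-conditioning of the $T_n$ together with the fact that the $\ell_2$-coordinates dominate in number to argue that the "effective" greedy set seen by each block is an honest coordinate set there; I would handle this by the standard truncation/perturbation argument (as in \cite{GW2014} and \cite{AAW2017}), comparing $S_A[\BB_0](f)$ with the block-diagonal operator and estimating the error by the quasi-greedy constant of $\BB$ and the unconditionality of $\ell_2$.
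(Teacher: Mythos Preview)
The paper does not prove this theorem; it is quoted from \cite{GW2014} and used as a black box. So there is no ``paper's own proof'' to compare your sketch against. That said, your sketch contains a genuine gap that would prevent it from working even as an independent proof.

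The problem is in your argument for quasi-greediness in item~(i). You write that the greedy projection, restricted to the $\XX$-coordinates of a block, is ``controlled by $L_{n}[\BB]$ and hence by the logarithmic bound of Theorem~\ref{CharacterizationSR}(a).'' But Theorem~\ref{CharacterizationSR}(a) applies only to quasi-greedy bases, and the hypothesis of Theorem~\ref{ConditionalityGOW} places no quasi-greediness assumption on $\BB$ whatsoever: $\BB$ is an \emph{arbitrary} Schauder basis. In fact the paper later applies the GOW-method to the summing system $\SSS$ and the difference system $\DD$ (see the proof of Theorems~\ref{T4}--\ref{T8}), for which $L_m\approx m$ by Lemmas~\ref{coLemma} and~\ref{l1Lemma}. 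Your mechanism would then yield a ``bound'' of order $n$ on the $n$-th block, which is useless for uniform quasi-greediness.

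The actual GOW construction does not rely on any conditionality control of $\BB$. It uses a specific family of block matrices (of Olevskii type) whose combinatorial and $\ell_2$-orthogonality properties force the greedy projections of $\BB_0$ to be uniformly bounded, independently of how badly conditioned $\BB$ is. The $\XX$-component contributes only one coordinate per block, and the quasi-greediness comes from the interaction of that single coordinate with the surrounding $\ell_2$-block through the chosen matrix, not from projecting in $\XX$. Your dimension counts and the verification of the ``Moreover'' clause and of (ii)--(iii) are on the right track, but the heart of the theorem---that an arbitrary basis can be upgraded to an almost greedy one---requires the specific matrix construction from \cite{GW2014}, which your sketch does not capture.
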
 

\begin{remark} Notice that combining Proposition 4.4 and Theorem 5.4 from \cite{DKKT2003} with Theorem~\ref{ConditionalityGOW} yields that the dual basic sequence of the one obtained by the GOW-method is also almost greedy, with fundamental function of the order of $(m^{1/2})_{m=1}^\infty$.
\end{remark}
Garrig\'os and Wojtaszczy \cite{GW2014} combined Theorem~\ref{ConditionalityGOW}  with the next theorem  for building  quasi-greedy bases as conditional as possible in $\ell_2$. 
\begin{theorem}[cf. \cite{GW2014}*{Proposition 3.10}]\label{GWHilbert} For each $0<a<1$ there is a  basis $\BB$ in $\ell_2$ with $L_m[\BB]\gtrsim m^a$ for $m\in\NN$.
\end{theorem}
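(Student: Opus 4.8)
The plan is to assemble $\BB$ out of finite-dimensional blocks. Fix $0<a<1$, pick $D>1$, set $d_n=\lceil D^n\rceil$, and identify $\ell_2$ with the orthogonal direct sum $(\bigoplus_{n=1}^\infty \ell_2^{d_n})_2$. The construction reduces to the following claim: there is a constant $K$ so that for every $d\in\NN$ there is a normalized basis $\BB_d=(u_{d,j})_{j=1}^d$ of $\ell_2^d$ with $\Vert S_s[\BB_d]\Vert\le K$ for all $1\le s\le d$ and with $L_d[\BB_d]\gtrsim d^a$, all constants independent of $d$. Granting the claim, I would concatenate the bases $\BB_{d_1},\BB_{d_2},\dots$ in the obvious order. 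Since the summands are mutually orthogonal, every partial-sum projection of the concatenated system $\BB$ is the orthogonal direct sum of the identity on the first few summands, a single truncated projection $S_s[\BB_{d_n}]$ on the next summand, and zero on the rest, so it has norm at most $K$; hence $\BB$ is a Schauder basis of $\ell_2$. The estimate $L_m[\BB]\gtrsim m^a$ then follows by the same bookkeeping as in the proof of Lemma~\ref{LemmaThree}: if $m$ falls in the coordinate range of the $n$-th block then the vector $f$ and the set $A$ witnessing $L_{d_{n-1}}[\BB_{d_{n-1}}]\gtrsim d_{n-1}^a$, viewed inside the $(n-1)$-th summand, are supported on coordinates $\le m$, while the geometric growth of the block sizes gives $d_{n-1}\approx d_n\approx m$, so that $L_m[\BB]\gtrsim d_{n-1}^a\gtrsim m^a$ (for $m$ in the first block the estimate is trivial).

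The heart of the matter is the finite building block $\BB_d$. I would produce it as the image of the unit-vector basis of $\ell_2^d$ under an explicit lower-triangular change of variables $T_d$ — a ``windowed, damped'' surrogate of the summing operator — chosen so that: (i) the amplification carried by $T_d$ is localized on a prescribed \emph{non-contiguous} set $A_d\subseteq\{1,\dots,d\}$ of cardinality $\approx d^{2a}$, over which a suitable normalized test vector $f_d$ spreads, yielding $\Vert S_{A_d}[\BB_d](f_d)\Vert\gtrsim d^a\Vert f_d\Vert$; and (ii) every \emph{contiguous} projection $S_s[\BB_d]$ dumps the ``leaning mass'' of only $O(1)$ basis vectors into the retained coordinate subspace, so that $\Vert S_s[\BB_d]\Vert$ stays bounded independently of $s$ and $d$. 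Calibrating the window widths and damping weights against the target exponent $a$ is what lets (i) and (ii) hold at once. The naive attempt — the plain summing basis $u_{d,j}=\ee_1+\dots+\ee_j$ of $\ell_2^d$ — fails precisely because there the monotone basis constant ($\approx d^{1/2}$) and the conditionality constant ($\approx d$) are rigidly linked and both blow up; the point of the windowing is to decouple them, keeping the former bounded while the latter still reaches the sub-linear value $d^a$.

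I expect the main obstacle to be exactly this decoupling: bounding uniformly in $s$ and $d$ the norms of \emph{all} initial-segment projections of $\BB_d$ — equivalently, showing that $(u_{d,j})_{j=1}^d$ really is a Schauder basis of $\ell_2^d$ with a dimension-free basis constant — while still keeping one genuinely non-interval coordinate projection of norm $\gtrsim d^a$. Concretely this comes down to operator-norm estimates for the off-diagonal corner blocks of $T_d$ and $T_d^{-1}$, or equivalently to lower bounds on the angles between the complementary pairs of span-subspaces attached to initial segments, contrasted with an upper bound on the analogous angle attached to $A_d$; getting the window-versus-$a$ arithmetic exactly right here is where the real work lies. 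By contrast, once the block is in place, the conditionality lower bound (i) is a single explicit evaluation of $\Vert S_{A_d}[\BB_d](f_d)\Vert$, and the passage back to $\ell_2$ is the routine orthogonality-and-bookkeeping argument indicated above.
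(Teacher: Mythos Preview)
The paper does not supply its own proof of this statement; Theorem~\ref{GWHilbert} is quoted from \cite{GW2014}*{Proposition 3.10} and used as a black box. So there is no in-paper argument to compare your proposal against. That said, your proposal itself has a genuine gap.

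Your scaffolding is fine and indeed matches how the paper would assemble a global basis from finite blocks (this is exactly the mechanism of Lemma~\ref{LemmaTwo}/\ref{LemmaThree}). But the entire content of the theorem lies in the existence of the finite-dimensional blocks $\BB_d$ with uniformly bounded basis constant and $L_d[\BB_d]\gtrsim d^a$, and here you have only a wish list, not a construction. You describe $T_d$ as ``a windowed, damped surrogate of the summing operator'' and say you will ``calibrate the window widths and damping weights against the target exponent $a$,'' but you never write down $T_d$, never specify $A_d$ or $f_d$, and never carry out either the basis-constant estimate (ii) or the conditionality estimate (i). You yourself flag that ``getting the window-versus-$a$ arithmetic exactly right here is where the real work lies''; that work is absent. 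As it stands the proposal is a correct identification of \emph{what} one needs, plus routine glue, with the actual proof-bearing object left unspecified.

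For comparison, the construction in \cite{GW2014} that the paper cites is explicit: it is built from Olevskii-type orthogonal matrices (block-by-block, on dyadic blocks of size $2^n$), and the exponent $a$ enters through a concrete reweighting of those blocks. The uniform Schauder-basis bound and the lower bound on $L_m$ are obtained by direct computation with those matrices, not by a soft ``windowing'' heuristic. If you want to complete your argument along your own lines, you will need to produce an equally concrete $T_d$ and verify both estimates; nothing short of that will close the gap.
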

Another technique for tailoring quasi-greedy bases was developed by Dilworth et al.\  in \cite{DKK2003}. Among the many   important results contained in that work, the authors proved the following two. \begin{theorem}[see \cite{DKK2003}, Corollary 6.3 and Theorem 7.1]\label{DKKAG} Assume that a Banach space $\XX$ has a basis. Then $\XX\oplus \ell_1$ has an almost greedy basis $\BB$ such that $\phi_m[\BB] \approx m$ for $m\in\NN$.
\end{theorem}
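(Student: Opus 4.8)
The plan is to construct directly a basis $\BB=(\zz_k)_{k=1}^\infty$ of $\XX\oplus\ell_1$ that is semi-normalized, quasi-greedy, and \emph{$\ell_1$-dominating from below}, i.e. $\Vert\sum_{k\in A}a_k\zz_k\Vert\gtrsim\sum_{k\in A}|a_k|$ for every finitely supported scalar sequence $(a_k)_k$. Since $\phi_m[\BB]=\sup_{|A|\le m}\Vert\sum_{k\in A}\zz_k\Vert\le m\sup_k\Vert\zz_k\Vert\lesssim m$ is automatic, the lower estimate gives $\inf_{|A|=m}\Vert\sum_{k\in A}\zz_k\Vert\gtrsim m\gtrsim\phi_m[\BB]$, so that $\BB$ is democratic and $\phi_m[\BB]\approx m$; being also quasi-greedy, $\BB$ is then almost greedy by the characterization of almost greedy bases as the quasi-greedy democratic ones (\cite{DKKT2003}).

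First I would fix a normalized basis $\BB_\XX=(\xx_n)_{n=1}^\infty$ of $\XX$ with basis constant $K$ and replace $\ell_1$ by the isometric copy $(\bigoplus_{n=1}^\infty\ell_1^{m_n})_1$, where $(m_n)_{n=1}^\infty$ grows geometrically, $m_n\approx D^n$, with $D$ large to be chosen at the end; write $(\ee_{n,j})_{j=1}^{m_n}$ for the unit vector basis of the $n$-th summand. The basis $\BB$ will be built and enumerated \emph{level by level}, the $n$-th level consisting of $m_n+1$ vectors, in such a way that the closed linear span of the first $n$ levels equals $\XX^n[\BB_\XX]\oplus(\bigoplus_{i=1}^n\ell_1^{m_i})_1$; in other words, level $n$ introduces one single new direction of $\XX$, namely $\xx_n$, together with the whole flat block $\ell_1^{m_n}$. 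This is the ``one $\XX$-direction per geometrically many flat coordinates'' bookkeeping underlying the GOW method (Theorem~\ref{ConditionalityGOW}). Since the finite-dimensional blocks $\VV_n:=[\xx_n]\oplus\ell_1^{m_n}$ form a Schauder finite-dimensional decomposition of $\XX\oplus\ell_1$ with decomposition constant at most $K$, the partial-sum projections of $\BB$ across level boundaries are automatically bounded, and everything reduces to choosing a suitable basis of each $\VV_n$, with uniformly bounded block-basis constants.

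The choice of the level-$n$ vectors is the heart of the matter, and the step I expect to be the main obstacle. Each of them should carry a norm-one flat piece of $\ell_1^{m_n}$, since then, summing over a set $A$ and estimating in the $\ell_1$-coordinates, the absolute values add (overlaps of flat pieces within a level contribute nonnegatively), which yields the lower bound $\Vert\sum_{k\in A}a_k\zz_k\Vert\gtrsim\sum_{k\in A}|a_k|$ as soon as $D$ is large enough to make the $\XX$-parts negligible. The subtle requirement is quasi-greediness. If one naively takes the vector carrying $\xx_n$ to be $(\xx_n,\bar\ee_n)$, with $\bar\ee_n=\frac1{m_n}\sum_{j=1}^{m_n}\ee_{n,j}$, and the other level-$n$ vectors to be $(0,\ee_{n,j})$, one checks that $\BB$ is a semi-normalized Schauder basis, democratic with $\phi_m[\BB]\approx m$; but the coordinate functional dual to that carrier, evaluated on an element of the form $(f,0)$, is exactly $\xx_n^*(f)$, so the greedy algorithm applied to $(f,0)$ selects an arbitrary greedy set $B$ of $f$ and the approximation error contains $f-S_B[\BB_\XX](f)$, whose norm is not controlled by $\Vert f\Vert$ once $\BB_\XX$ is conditional. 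To block this leakage of the conditionality of $\BB_\XX$ one must arrange the level-$n$ vectors --- by a telescoping device in the spirit of \cite{Wo2000} and of the Garrig\'os--Wojtaszczyk construction --- so that the coordinate functionals of $\BB$, restricted to the directions of $\XX$, behave like \emph{partial-sum} functionals associated with $\BB_\XX$ rather than like single coordinate functionals $\xx_n^*$; concretely, so that for every input and every greedy set the operation induced on the $\XX$-side is a bounded perturbation of some partial-sum projection $S_n[\BB_\XX]$, hence controlled by $K$. Reconciling this with the two competing constraints --- boundedness of the coordinate functionals, which forbids putting a large $\ell_1$-mass on the carrier, and flatness of the $\ell_1$-footprint of each level --- is exactly what the Dilworth--Kalton--Kutzarova construction achieves.

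Granting such a choice, the verification splits into three routine but lengthy steps. (1) $\BB$ is a semi-normalized Schauder basis of $\XX\oplus\ell_1$: combine the finite-dimensional-decomposition constant with the uniform bound on the block-basis constants. (2) $\BB$ is democratic with $\phi_m[\BB]\approx m$: this is the $\ell_1$-lower estimate above, valid once $D$ has been fixed large. (3) $\BB$ is quasi-greedy: given $f\in\XX\oplus\ell_1$ and a greedy set $A$, decompose $A$ along the levels, bound the ``within-level'' part of the error $f-S_A[\BB](f)$ using that the flat pieces are $\ell_1$-almost disjoint and $m_n$ is large, bound the ``across-level'' part using that by construction it is a bounded perturbation of a partial-sum projection of $\XX$, and add the estimates. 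The assertion $\phi_m[\BB]\approx m$ is part of (2). One may finally note that the geometric growth of $(m_n)$ is precisely what later renders this basis compatible with Lemmas~\ref{LemmaTwo} and~\ref{LemmaThree} and with Theorem~\ref{ConditionalityGOW}; the full details are in \cite{DKK2003}.
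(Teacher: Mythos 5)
The paper offers no proof of this statement: it is imported verbatim from \cite{DKK2003} (Corollary 6.3 and Theorem 7.1), so there is no internal argument to compare your sketch against. Judged on its own terms, your reduction is sound: a semi-normalized quasi-greedy basis of $\XX\oplus\ell_1$ satisfying the lower estimate $\Vert\sum_{k\in A}a_k\zz_k\Vert\gtrsim\sum_{k\in A}|a_k|$ is automatically democratic with $\phi_m[\BB]\approx m$, hence almost greedy by the characterization in \cite{DKKT2003}. Your diagnosis of why the naive choice fails is also correct: with carrier $(\xx_n,\bar\ee_n)$ and satellites $(0,\ee_{n,j})$, the functional biorthogonal to the carrier restricts to $\xx_n^*$ on $\XX\oplus 0$, so greedy projections of $(f,0)$ reproduce arbitrary coordinate projections of $\BB_\XX$ and quasi-greediness fails whenever $\BB_\XX$ is conditional.

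The genuine gap sits exactly where you place it: the level-$n$ vectors are never defined. You list the properties the construction must have and then defer to \cite{DKK2003}, which amounts to assuming the theorem rather than proving it. Moreover, the property you ask for --- that the coordinate functionals of $\BB$, restricted to $\XX$, behave like partial-sum functionals $S_n[\BB_\XX]$ --- points toward the Wojtaszczyk/Garrig\'os--Wojtaszczyk telescoping device rather than toward the mechanism that actually drives the Dilworth--Kalton--Kutzarova construction. There the satellites of level $n$ are (up to bookkeeping) the differences $(0,\ee_{n,j}-\ee_{n,j+1})$, so that the functional biorthogonal to the carrier $(\xx_n,\bar\ee_n)$ can be taken proportional to $(0,\sum_{j}\ee_{n,j}^*)$: it is supported entirely on the flat $\ell_1$-block and therefore \emph{vanishes} on $\XX\oplus 0$. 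No partial-sum behaviour on the $\XX$-side is needed; the leakage is killed outright, and the quasi-greedy estimate then reduces to controlling greedy sums of flat averages and of differences inside $\ell_1$ (plus the harmless FDD projections you already identified). Until such a choice is exhibited and that estimate is carried out --- which is where all the work of \cite{DKK2003} lies --- what you have is an outline of a proof, not a proof.
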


\begin{theorem}[see \cite{DKK2003},  Theorem 7.2]\label{DKKQG} Assume that a Banach space $\XX$ has a basis and that
$1<p<\infty$ Then $\XX \oplus \ell_p $ has a quasi-greedy basis.
\end{theorem}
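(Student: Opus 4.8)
The plan is to deduce Theorem~\ref{DKKQG} from the already-stated results as a combination of the $\ell_p$-sum construction in Lemma~\ref{LemmaTwo} and the almost greedy basis produced in Theorem~\ref{DKKAG}, following the strategy of \cite{DKK2003}. First I would recall that a quasi-greedy basis with the right fundamental function can be glued to an arbitrary basis: starting from \emph{any} basis $\BB$ of $\XX$, one produces a quasi-greedy basis for $\XX\oplus\ell_p$ provided one already has a ``master'' quasi-greedy basis in $\ell_p$ with fundamental function $\phi_m\approx m^{1/p}$ into which the finite-dimensional pieces $\XX^{d_n}[\BB]$ can be absorbed. So the first step is to fix an increasing sequence $d_n\approx D^n$ (any $D>1$ works) and to observe that the standard unit vector basis of $\ell_p$, suitably blocked, gives a democratic quasi-greedy (indeed unconditional) basis to which Lemma~\ref{LemmaTwo} applies; but since we make no conditionality claim here, we only need parts (b) and (c) of that lemma, and in fact the truly relevant input is the weaker one: $\ell_p$ admits a quasi-greedy basis with fundamental function $m^{1/p}$, namely its unit vector basis.

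The second and main step is the absorption argument. Take a basis $\BB=(\xx_j)$ of $\XX$ and the doubling increasing function $\delta(t)=1$ (so all hypotheses about $L_m[\BB]\gtrsim\delta(m)$ are vacuously satisfied and we can invoke Lemma~\ref{LemmaThree} or Lemma~\ref{LemmaTwo} purely for the structural conclusion, not the conditionality one). Split $\XX^{d_n}[\BB]$, for each $n$, by a uniformly bounded isomorphism onto a product $\YY_n\oplus\ZZ_n$ — here one takes $\YY_n=\XX^{d_n}[\BB]$ itself (or a complemented copy) and $\ZZ_n=\{0\}$, or more usefully one arranges that the $\ZZ_n$-parts assemble to an $\ell_p$-sum of finite-dimensional spaces that is isomorphic to $\ell_p$. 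Then Lemma~\ref{LemmaThree} yields a quasi-greedy basis for $(\bigoplus_{n=1}^\infty\YY_n)_r\oplus(\bigoplus_{n=1}^\infty\ZZ_n)_p$ for a free choice of the outer index $r$. Choosing $r$ and the splitting so that $(\bigoplus_n\YY_n)_r$ contains $\XX$ complemented — which is where Theorem~\ref{DKKAG} enters, since $\XX\oplus\ell_1$, hence a space into which $\XX$ embeds complementably with the extra $\ell_1$-summand carrying the finite-dimensional surplus, has an almost greedy, in particular quasi-greedy, basis — and so that the $\ell_p$-sum of the $\ZZ_n$'s is isomorphic to $\ell_p$, we obtain a quasi-greedy basis for a space isomorphic to $\XX\oplus\ell_p$. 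The point $1<p<\infty$ is used precisely here: for such $p$ an $\ell_p$-sum of uniformly bounded finite-dimensional spaces of the appropriate dimensions is complemented in, and together with a copy of $\XX$ reassembles to, $\XX\oplus\ell_p$; this fails for $p=1$, which is exactly why Theorem~\ref{DKKAG} treats $\ell_1$ separately.

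The third step is bookkeeping: check that the isomorphism identifying $(\bigoplus_n\YY_n)_r\oplus(\bigoplus_n\ZZ_n)_p$ with $\XX\oplus\ell_p$ transports the quasi-greedy basis to a quasi-greedy basis (quasi-greediness is an isomorphic invariant with a controlled constant), and that semi-normalization is preserved. One must also verify the hypothesis of Lemma~\ref{LemmaThree} that $\sup_n\Vert(P_n,Q_n)\Vert\,\Vert(P_n,Q_n)^{-1}\Vert<\infty$ for the chosen splittings; this is where one needs that the decomposition of $\XX^{d_n}[\BB]$ into ``a stable part plus an $\ell_p^{k_n}$-part'' can be done with uniformly bounded norms, which in turn is arranged by first passing to $\XX\oplus\ell_1$ and invoking the explicit almost greedy basis of Theorem~\ref{DKKAG} whose fundamental function $m$ guarantees the finite-dimensional truncations behave like $\ell_1^n$, and then only the $\ell_p$-surplus needs separate handling.

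The hard part, and the only genuinely non-formal point, is the uniform-boundedness of the splitting isomorphisms $(P_n,Q_n)$: one must exhibit, inside the $d_n$-dimensional span $\XX^{d_n}[\BB]$ of the almost greedy basis furnished by Theorem~\ref{DKKAG} for $\XX\oplus\ell_1$, a decomposition whose $\YY_n$-component reassembles (over $n$) to a space still containing $\XX$ complementably and whose $\ZZ_n$-component is uniformly isomorphic to $\ell_p^{e_n}$ with $e_n\approx D^n$, all with Banach--Mazur distances bounded independently of $n$. Once this is in place, everything else is an application of Lemma~\ref{LemmaThree} with $\delta\equiv 1$ (discarding its conditionality conclusion, keeping the quasi-greedy conclusion) followed by the observation that the resulting space is isomorphic to $\XX\oplus\ell_p$. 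I expect the author's proof to cite \cite{DKK2003}*{Theorem 7.2} directly and reconstruct exactly this absorption, so the proof here will be short: it will invoke Theorem~\ref{DKKAG}, choose the blocking $d_n\approx D^n$, apply Lemma~\ref{LemmaThree}, and conclude by an isomorphism identification.
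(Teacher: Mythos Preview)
The paper does not give a proof of this statement at all: Theorem~\ref{DKKQG} is quoted verbatim from \cite{DKK2003}*{Theorem~7.2} as a black box, exactly like Theorem~\ref{DKKAG} just above it, and is used only as an off-the-shelf tool in the proofs of Theorems~\ref{T7}, \ref{T9} and \ref{T31}. So there is no ``paper's own proof'' to compare against; your expectation in the last sentence of your proposal (``I expect the author's proof to cite \cite{DKK2003}*{Theorem~7.2} directly'') is in fact the whole story --- the authors cite it and move on.

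As for the reconstruction you sketch, it does not go through with the tools available in this paper. The crucial hypothesis in Lemma~\ref{LemmaThree} for the quasi-greedy conclusion is that the \emph{input} basis $\BB$ be quasi-greedy; a general basis of $\XX$ gives you nothing. You try to repair this by first invoking Theorem~\ref{DKKAG} to obtain an almost greedy basis of $\XX\oplus\ell_1$, but then the finite-dimensional truncations you are blocking live in $\XX\oplus\ell_1$, and you yourself note that their behaviour is $\ell_1^n$-like (since $\phi_m\approx m$). There is no mechanism in the paper for uniformly splitting those truncations as $\YY_n\oplus\ZZ_n$ with $\ZZ_n$ uniformly isomorphic to $\ell_p^{e_n}$ while the $\YY_n$-parts reassemble to something containing $\XX$ complementably --- you flag this as ``the hard part'', and indeed it is the entire content of the theorem, not a bookkeeping detail. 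The actual argument in \cite{DKK2003} proceeds by a direct construction (a carefully weighted perturbation of the given basis of $\XX$ interlaced with blocks of the unit vector basis of $\ell_p$) rather than by bootstrapping from the $\ell_1$ case via the direct-sum machinery of Lemmas~\ref{LemmaTwo}--\ref{LemmaThree}.
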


Next we summarize the information we need on the Lindenstrauss sequence $\LL=(\li_j)_{j=1}^\infty$, defined by
\[
 \li_j=\ee_j - \frac{1}{2}(\ee_{2j}+\ee_{2j+1}), \quad j\in\NN.
\]

\begin{theorem}[see \cites{Singer1970,DilworthMitra2001,LinPel1968,GHO2013}]\label{Linde}The Lindenstrauss sequence $\LL$ is an almost greedy basic sequence in $\ell_1$ verifying
\begin{itemize}
\item[(a)] $L_m[\LL]\approx \log m$ for $m\ge 2$, 
\item[(b)] $\ell_1^d[\LL] \approx \ell_1^d$ for $d\in\NN$, and
\item[(c)] $\phi_m[\LL] \approx m$ for $m\in\NN$.
\end{itemize}
\end{theorem}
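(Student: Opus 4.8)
The plan is to establish each of the three properties (a), (b), (c) of the Lindenstrauss sequence $\LL=(\li_j)_{j=1}^\infty$ by essentially elementary computations, since all of them are well documented in the cited literature; the role of this theorem in the paper is to package the facts we need, so a sketch tracing the key identities suffices. First I would verify that $\LL$ is a basic sequence in $\ell_1$ equivalent to a block-type perturbation of the unit vector basis: writing $\li_j = \ee_j - \tfrac12(\ee_{2j}+\ee_{2j+1})$, one checks that the synthesis operator $(\ee_j)\mapsto(\li_j)$ is bounded and bounded below on finitely supported vectors. The cleanest way is to observe the ``telescoping'' formula expressing $\ee_j$ in terms of $\li_j,\li_{2j},\li_{2j+1},\dots$: iterating $\ee_j=\li_j+\tfrac12(\ee_{2j}+\ee_{2j+1})$ down the dyadic tree yields $\ee_j=\sum_{k}2^{-\mathrm{level}} \li_k$ over the subtree rooted at $j$, and the $\ell_1$-norms of the coefficient vectors are uniformly controlled because the weights sum geometrically. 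This simultaneously gives (b): for each $d$, the span $\ell_1^d[\LL]=[\li_1,\dots,\li_d]$ is uniformly isomorphic to $\ell_1^d$, with constants independent of $d$.

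Next I would treat the conditionality estimate (a). The upper bound $L_m[\LL]\lesssim\log m$ is a special case of Theorem~\ref{CharacterizationSR}(a) once one knows $\LL$ is quasi-greedy (which follows from almost greediness), but it is more natural here to compute directly: for $f=\sum_{j\le d}a_j\li_j$ and $A\subseteq\NN$, expand $S_A[\LL](f)$ in the unit vector basis and estimate the $\ell_1$-norm; the cancellations in the Lindenstrauss vectors produce at worst a logarithmic factor coming from the depth $\sim\log d$ of the relevant dyadic tree. For the matching lower bound $L_m[\LL]\gtrsim\log m$ one exhibits explicit test vectors: taking $f$ supported on a full dyadic branch of length $\sim\log m$ and $A$ the set of ``odd'' indices along that branch, the coordinate projection fails to cancel and one reads off a norm ratio of order $\log m$. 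This is the standard example showing $\ell_1$ is non-superreflexive via a conditional quasi-greedy basic sequence; I would cite \cite{GHO2013} for the precise bookkeeping rather than redo it.

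Finally, property (c), the democracy estimate $\phi_m[\LL]\approx m$, is immediate once (b) is in hand: from $\ell_1^d[\LL]\approx\ell_1^d$ uniformly, any sum $\sum_{j\in A}\li_j$ with $|A|\le d$ has norm comparable to $|A|$ by the $\ell_1^{|A|}\hookrightarrow\ell_1^d$ estimate, and the reverse inequality $\|\sum_{j\in A}\li_j\|\lesssim|A|$ is the triangle inequality together with $\sup_j\|\li_j\|<\infty$; so $\phi_m[\LL]\approx m$. Almost greediness of $\LL$ then follows from the characterization in \cite{DKKT2003}: $\LL$ is democratic by (c) and quasi-greedy because a democratic basic sequence in $\ell_1$ with fundamental function $\approx m$ satisfies the quasi-greedy inequality directly (the greedy sum of $f$ carries the bulk of the $\ell_1$-mass). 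I expect the only mildly delicate point to be the sharp lower bound in (a): choosing the test vectors and verifying that the uncancelled part has $\ell_1$-norm genuinely of order $\log m$ (not merely $\gtrsim 1$) requires care with the geometric weights along the dyadic branch, but this is exactly the computation carried out in the references, and I would lean on \cites{DilworthMitra2001,GHO2013} for it.
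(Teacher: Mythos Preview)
Your sketch has two genuine gaps.

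First, your argument for (b) actually proves too much. You claim that the synthesis map $(\ee_j)\mapsto(\li_j)$ is bounded and \emph{uniformly} bounded below on finitely supported vectors, and that this gives $\ell_1^d[\LL]\approx\ell_1^d$ uniformly. But a uniform lower bound on the synthesis map would make $(\li_j)_{j=1}^\infty$ equivalent to the unit vector basis of $\ell_1$, hence unconditional, directly contradicting (a). The statement (b) asserts only that the \emph{spaces} $[\li_1,\dots,\li_d]$ and $\ell_1^d$ are uniformly isomorphic, via some isomorphism that is \emph{not} the naive one sending $\li_j\mapsto\ee_j$; this is the content of \cite{LinPel1968}*{Example 8.1}. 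Relatedly, your telescoping computation is miscounted: in $\ee_j=\sum_k 2^{-\mathrm{level}(k)}\li_k$ over the subtree rooted at $j$, level $n$ contains $2^n$ nodes each with weight $2^{-n}$, so each level contributes $1$ to the $\ell_1$-norm of the coefficient vector and the sum diverges; the weights do \emph{not} sum geometrically. The same error propagates into your derivation of (c), where you again invoke the (false) uniform equivalence of $(\li_j)_{j\le d}$ with $(\ee_j)_{j\le d}$.

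Second, your justification of quasi-greediness is not valid. ``Democratic with $\phi_m\approx m$ in $\ell_1$'' does not by itself force the quasi-greedy inequality; there is no such general principle. Quasi-greediness of $\LL$ is a nontrivial fact established in \cite{DilworthMitra2001}, and the paper's proof simply cites it. Since the theorem is expressly a packaging of known results, the paper's own proof is a citation proof: basic sequence from \cite{Singer1970}, quasi-greedy from \cite{DilworthMitra2001}, (a) from the computation in \cite{GHO2013}*{Example 1}, (b) from \cite{LinPel1968}*{Example 8.1}, and (c) as a consequence of (b). Your attempt to flesh out the arguments is reasonable in spirit for (a), but for (b) and for quasi-greediness you should defer to the cited references rather than the heuristics you gave.
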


\begin{proof}That $\LL$ is a basic sequence is proved in \cite{Singer1970}*{p.\ 455}, and that is quasi-greedy is showed in \cite{DilworthMitra2001}. The argument used in \cite{GHO2013}*{Example 1} for computing the conditionality constants of $\LL$  gives (a). Finally, (b) is obtained in \cite{LinPel1968}*{Example 8.1}, and (c) is a consequence of (b).
\end{proof}

Next, we consider the   \textit{summing system} $\SSS=(\sss_j)_{j=1}^\infty$, given by
\[
\textstyle
\quad \sss_j=\sum_{k=1}^j \ee_j.
\]
It is known that $\SSS$ is a conditional basis for $c_0$. Most proofs  of this fact 
(see, e.g., \cite{AlbiacKalton2016}*{Example 3.1.2}) give the following.
\begin{lemma}\label{coLemma}\label{Summingc0} The summing system $\SSS$ is a basis  for $c_0$ with  $L_m[\SSS]\approx m$ for $m\in\NN$, and $c_0^d[\SSS]=\ell_\infty^d$ for all $d\in\NN$.
\end{lemma}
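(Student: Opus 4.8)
The plan is to verify directly that the summing system $\SSS=(\sss_j)_{j=1}^\infty$ is a Schauder basis for $c_0$, that $c_0^d[\SSS]=\ell_\infty^d$, and that $L_m[\SSS]\approx m$; these are all standard facts, so the proof is essentially a careful bookkeeping exercise. First I would recall (or cite \cite{AlbiacKalton2016}*{Example 3.1.2}) that $\SSS$ is a basis for $c_0$: given $f=(a_k)_{k=1}^\infty\in c_0$, the biorthogonal functional is $\sss_j^*(f)=a_j-a_{j+1}$, and the partial sums satisfy $S_n[\SSS](f)=(a_1-a_{n+1},a_2-a_{n+1},\dots,a_n-a_{n+1},0,0,\dots)$, which converges to $f$ in the sup-norm since $a_{n+1}\to 0$; uniform boundedness of $(S_n[\SSS])$ with constant $2$ is immediate from this formula. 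The identity $c_0^d[\SSS]=\ell_\infty^d$ follows because $\sss_1,\dots,\sss_d$ span exactly the coordinate subspace $\{(a_k): a_k=0 \text{ for } k>d\}$, with the ambient norm being the sup-norm, i.e.\ $\ell_\infty^d$.

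Next I would compute the w-conditionality constants. For the lower bound $L_m[\SSS]\gtrsim m$, take $f=\sss_m=\ee_1+\cdots+\ee_m$, so $\max(\supp(f))=m$ and $\Vert f\Vert_\infty=1$, and choose $A$ to be the set of odd indices in $\{1,\dots,m\}$ (or any alternating set). Writing $f=\sum_{j=1}^m \sss_j^* (f)\,\sss_j$ — but note that in the basis expansion of $\sss_m$ only the single coefficient $\sss_m^*(\sss_m)=1$ survives — one instead expresses an $f$ whose \emph{coordinate-functional} coefficients alternate. Concretely I would take $f=\sum_{j\in E}(\sss_j-\sss_{j-1})=\sum_{j\in E}\ee_j$ with $E\subseteq\{1,\dots,m\}$, whose $\SSS$-coefficients are then $\pm 1$ on a set of size $\approx |E|$; choosing $E$ so that these coefficients alternate in sign and then applying $S_A[\SSS]$ for the appropriate $A$ produces a vector of sup-norm $\approx m$ while $\Vert f\Vert_\infty=1$. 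This is exactly the classical computation showing $k_m[\SSS]\approx m$ for $c_0$, adapted to the w-setting (which only makes it easier, since we may freely choose $f$ supported in $\{1,\dots,m\}$).

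For the upper bound $L_m[\SSS]\lesssim m$, I would argue that if $\max(\supp f)\le m$ then $f=\sum_{j=1}^{m}\sss_j^*(f)\sss_j$ lies in $c_0^m[\SSS]=\ell_\infty^m$, and for \emph{any} $A\subseteq\NN$, $S_A[\SSS](f)=\sum_{j\in A\cap\{1,\dots,m\}}\sss_j^*(f)\sss_j$ is a sum of at most $m$ terms, each of the form $c_j\sss_j$ with $|c_j|=|a_j-a_{j+1}|\le 2\Vert f\Vert_\infty$ and $\Vert\sss_j\Vert_\infty=1$; hence $\Vert S_A[\SSS](f)\Vert_\infty\le 2m\Vert f\Vert_\infty$. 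Combining the two bounds gives $L_m[\SSS]\approx m$.

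The proof is entirely routine and there is no real obstacle; the only point requiring a little care is getting the \emph{sharp} constant in the lower bound, i.e.\ exhibiting a single test vector $f$ supported on $\{1,\dots,m\}$ and a set $A$ realizing $\Vert S_A[\SSS](f)\Vert_\infty\gtrsim m\,\Vert f\Vert_\infty$, for which the alternating-sign choice of the coordinates $a_j$ (equivalently, alternating $\SSS$-coefficients) is the standard trick. Since the statement only asserts $L_m[\SSS]\approx m$ and explicitly says ``most proofs of this fact give the following,'' I would in fact simply refer to \cite{AlbiacKalton2016}*{Example 3.1.2} for the basis property and the equality $c_0^d[\SSS]=\ell_\infty^d$, and note that the same computation done there for $k_m[\SSS]$ yields the claimed estimate for $L_m[\SSS]$ verbatim, since every $f$ used in that computation is finitely supported.
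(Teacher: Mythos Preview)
Your proposal is correct and matches the paper's treatment: the paper gives no proof at all, simply stating the lemma after the remark that ``most proofs of this fact (see, e.g., \cite{AlbiacKalton2016}*{Example 3.1.2}) give the following.'' Your closing suggestion to cite that reference and note that the computation there already uses finitely supported test vectors is exactly in the spirit of what the paper does, and your explicit verifications (the biorthogonal functionals $\sss_j^*(f)=a_j-a_{j+1}$, the identity $c_0^d[\SSS]=\ell_\infty^d$, the triangle-inequality upper bound $\Vert S_A[\SSS](f)\Vert_\infty\le 2m\Vert f\Vert_\infty$, and the alternating-coefficient lower bound) are all sound. The only cosmetic remark is that your lower-bound paragraph begins with a false start ($f=\sss_m$) before you course-correct to the right test vector; in a final write-up you would simply take $f=\sum_{j=1}^m(-1)^{j-1}\sss_j$ (equivalently $f=\sum_{j\text{ odd},\,j\le m}\ee_j$) and $A=\{j\le m: j\text{ odd}\}$ from the outset.
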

By duality, the \textit{difference system} $\DD=(\dd_j)_{j=1}^\infty$, defined by
\[
\dd_j=\ee_j -\ee_{j-1}  \text{ (with the convention $\ee_0=0$)},
\]
is a conditional basis for $\ell_1$. Indeed, we have the following.
\begin{lemma}\label{l1Lemma} The difference system $\DD$ is a basis of $\ell_1$ such that $L_m[\DD]\approx m$ for $m\in\NN$, and  $\ell_1^d[\DD]=\ell_1^d$ for all $d\in\NN$.
\end{lemma}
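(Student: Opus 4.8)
The plan is to derive Lemma~\ref{l1Lemma} from Lemma~\ref{Summingc0} by a duality argument, exactly as the phrase ``by duality'' in the text suggests. First I would record the precise relationship between the difference system $\DD$ and the summing system $\SSS$: the sequence $\DD=(\dd_j)_{j=1}^\infty$ in $\ell_1$ is biorthogonal to $\SSS=(\sss_j)_{j=1}^\infty$ in the sense that, under the natural identification $\ell_1=c_0^*$, one has $\dd_j=\sss_j^*$, i.e. $\langle \dd_j,\sss_k\rangle=\delta_{j,k}$. This is a one-line check since $\sss_k=\sum_{i=1}^k\ee_i$ and $\dd_j=\ee_j-\ee_{j-1}$ telescopes. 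The general principle then applies: if $(\sss_j)$ is a (Schauder) basis for $c_0$, its sequence of coordinate functionals $(\sss_j^*)=\DD$ is a basic sequence in $c_0^*=\ell_1$, and in fact a basis for $\ell_1$ because $c_0$ has separable dual (equivalently, the summing basis is shrinking, which one sees directly: $\|S_n^*[\SSS]f^*\|\to 0$ for $f^*\in\ell_1$). So $\DD$ is a basis for $\ell_1$, with coordinate functionals $\dd_j^*=\sss_j$ viewed in $\ell_1^*=\ell_\infty$.

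Next I would transfer the two quantitative assertions. For the w-conditionality constants, the key observation is that coordinate projections dualize: $S_A[\DD]=S_A[\SSS]^*$ as operators on $\ell_1=c_0^*$, and hence $\|S_A[\DD]\|=\|S_A[\SSS]^*\|=\|S_A[\SSS]\|$ (with the caveat that one must be slightly careful, because the w-conditionality constant $L_m$ restricts not the size of $A$ but the support of $f$, and duality swaps these roles). Concretely, $L_m[\DD]$ is a supremum over $f\in\ell_1$ with $\max(\supp f)\le m$ and arbitrary $A$; writing $f=\sum_{j\le m}\langle f,\sss_j^*\rangle\dd_j$ (finitely supported, hence in the span) and using $S_A[\DD]f=\sum_{j\in A,\,j\le m}\langle \cdot,\dd_j\rangle$ on the predual level, one checks that the relevant suprema for $\DD$ and for $\SSS$ coincide up to the basis constant, giving $L_m[\DD]\approx L_m[\SSS]\approx m$ for $m\in\NN$ by Lemma~\ref{Summingc0}. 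Since $\DD$ is normalized ($\|\dd_j\|_1=2$ for $j\ge 2$, $\|\dd_1\|_1=1$) this is consistent. For the finite-dimensional structure, $\ell_1^d[\DD]=[\dd_1,\dots,\dd_d]$; the matrix expressing $(\dd_j)_{j=1}^d$ in terms of $(\ee_j)_{j=1}^d$ is lower-bidiagonal with $\pm1$ entries and determinant $\pm1$, so it is a linear isomorphism of $\FF^d$ onto itself, and I would check that this isomorphism and its inverse (the latter being the partial-summing matrix, with entries in $\{0,1\}$) have norms bounded independently of $d$ when $\FF^d$ carries the $\ell_1^d$ norm --- this is precisely the dual/finite-dimensional shadow of $c_0^d[\SSS]=\ell_\infty^d$ from Lemma~\ref{Summingc0}. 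In fact the cleanest route is: $\ell_1^d[\DD]=\big(c_0^d[\SSS]\big)^*=(\ell_\infty^d)^*=\ell_1^d$ isometrically, since taking coordinate functionals of a basis of a finite-dimensional space $E$ gives a basis of $E^*$ and the identification is isometric here.

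The only genuinely delicate point is bookkeeping around the definition of $L_m$: because $L_m[\BB]$ asymmetrically bounds $\max(\supp f)$ but leaves $A$ unrestricted, one must verify that dualization sends the extremal configurations for $\SSS$ to admissible configurations for $\DD$ and vice versa, rather than naively asserting $L_m[\DD]=L_m[\SSS]$. I expect this to be routine --- one shows $L_m[\DD]\ge \|S_{A}[\DD]|_{\ell_1^m[\DD]}\|$ and dualizes each such operator to a coordinate projection on $c_0^m[\SSS]$, recovering the lower bound $L_m[\SSS]\approx m$ up to the (uniformly bounded) basis constants, and the reverse inequality similarly --- but it is where the proof actually has content beyond invoking abstract duality. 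Since the paper explicitly says ``Most proofs of this fact give the following'' for the $\SSS$ case, I would simply mirror that: state that the standard computation showing $\SSS$ is conditional in $c_0$ dualizes verbatim, and omit the arithmetic, writing ``the proof is analogous to (indeed dual to) that of Lemma~\ref{Summingc0}, so we omit it,'' which matches the stated convention of the section.
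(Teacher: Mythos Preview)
Your overall plan---deduce the lemma from Lemma~\ref{Summingc0} by duality and then omit the details, as the paper does---matches the paper, which gives no proof at all beyond the phrase ``by duality'' preceding the statement. However, the specific duality you describe is incorrect, and if you actually carried out the argument as written it would not work.

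The system $\DD=(\ee_j-\ee_{j-1})_{j}$ is \emph{not} the sequence of coordinate functionals of $\SSS$: a direct check gives $\langle \dd_1,\sss_2\rangle=\langle \ee_1,\ee_1+\ee_2\rangle=1\neq 0$, so your ``one-line check'' of biorthogonality fails. In fact $\sss_j^*=\ee_j-\ee_{j+1}$, which is $\DD$ only after a reversal of the index. Relatedly, the summing basis of $c_0$ is \emph{not} shrinking: the closed linear span of $(\ee_j-\ee_{j+1})_j$ in $\ell_1$ is the hyperplane $\{g\in\ell_1:\sum_j g_j=0\}$, not all of $\ell_1$. Consequently the identity $S_A[\DD]=S_A[\SSS]^*$ that you rely on is false as well. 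The correct relationship is that the change-of-basis matrices for $\SSS$ and $\DD$ are \emph{inverses}, not adjoints, of one another; on $\FF^d$ the adjoint of the summing matrix is the upper-triangular all-ones matrix, and the reversal isometry $(a_1,\dots,a_d)\mapsto(a_d,\dots,a_1)$ is what converts $(\ee_j-\ee_{j+1})_{j\le d}$ into $(\dd_j)_{j\le d}$ (up to reordering). Once you use this reversal, the finite-dimensional identities $c_0^d[\SSS]=\ell_\infty^d$ and the computation of $L_m[\SSS]$ transfer to $\DD$; alternatively, the lemma is just as easy to verify directly (take $f=\dd_1+\dots+\dd_m=\ee_m$ and $A$ the odd indices for the lower bound on $L_m[\DD]$). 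Either route justifies the paper's decision to omit the proof, but your stated duality does not.
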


A basis $(\xx_j)_{j=1}^\infty$ is said  to be of \textit{type $P$} if   $\sup_k \Vert \sum_{j=1}^k \xx_j \Vert <\infty$ and  $\inf_j \Vert \xx_j\Vert>0$. Notice that both the unit vectors system in $c_0$ and the difference system in $\ell_1$ are bases of type P.
The following lemma shows  that Banach spaces with a  basis of type P follow the pattern  of  $c_0$ and $\ell_1$ exhibited, respectively, in Lemmas~\ref{coLemma} and \ref{l1Lemma}.
\begin{lemma}[see \cite{AAW2017}]\label{AnsoLemma}Let $\BB$ be a basis of type $P$ of a Banach space $\XX$. Then there is a basis
$\BB_0$ for $\XX$ such that $L_m[\BB_0]\approx m$ for $m\in\NN$ and 
$\XX^{2^n-2}[\BB_0]=\XX^{2^n-2}[\BB]$ for all $n\ge 2$.
\end{lemma}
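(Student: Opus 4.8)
The plan is to exploit the defining features of a type-$P$ basis in essentially the same way one verifies that the summing system realizes $L_m[\SSS]\approx m$. First I would set $\BB=(\xx_j)_{j=1}^\infty$ and define a new sequence $\BB_0=(\yy_j)_{j=1}^\infty$ by a ``summing change of variables'' adapted to $\BB$, namely $\yy_j=\sum_{k=1}^j \xx_k$, or rather a block version of this so as to keep the spans $\XX^{2^n-2}[\BB_0]=\XX^{2^n-2}[\BB]$ intact for $n\ge 2$; the precise bookkeeping is exactly the ``telescoping'' one finds in \cite{AAW2017}, and I would simply reproduce that choice. The coordinate functionals of $\BB_0$ are then the differences $\yy_j^*=\xx_j^*-\xx_{j+1}^*$, so that $\BB_0$ is biorthogonal and, because $\sup_k\Vert\sum_{j=1}^k\xx_j\Vert<\infty$ together with $\inf_j\Vert\xx_j\Vert>0$, one checks that the partial sum projections $S_m[\BB_0]$ are uniformly bounded (they telescope to a single partial sum of $\BB$ plus a single term), whence $\BB_0$ is a Schauder basis for $\XX$.

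The second step is the lower bound $L_m[\BB_0]\gtrsim m$. Here I would take $f=\xx_1$ (or a fixed block of $\BB$ supported in the first few coordinates) and compute its $\BB_0$-coordinates: telescoping shows $\xx_1=\sum_{j=1}^{N}(\yy_j-\yy_{j-1})$-type identities produce a vector $f$ with $\max(\supp_{\BB_0}(f))\le m$ whose coordinate sequence is (up to sign) the constant sequence of length $\approx m$, exactly as for $\SSS$ in $c_0$. Applying the coordinate projection $S_A[\BB_0]$ with $A$ the ``odd'' half of $\{1,\dots,m\}$ yields $\Vert S_A[\BB_0](f)\Vert\gtrsim m\,\inf_j\Vert\xx_j\Vert\gtrsim m$ while $\Vert f\Vert=\Vert\xx_1\Vert$ is a constant; this gives $L_m[\BB_0]\gtrsim m$. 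The matching upper bound $L_m[\BB_0]\lesssim m$ is automatic since for any basis $\Vert S_A[\BB_0]\Vert\le m\sup_j\Vert\xx_j\Vert\sup_j\Vert\xx_j^*\Vert\lesssim m$ when $|A|\le m$, and $\max(\supp(f))\le m$ forces $|A\cap\supp(f)|\le m$.

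The remaining point is the span identity $\XX^{2^n-2}[\BB_0]=\XX^{2^n-2}[\BB]$ for $n\ge 2$, which is where the specific (block-summing) definition of $\BB_0$ matters rather than the naive one: since each $\yy_j$ is a finite $\{0,1\}$-combination of the $\xx_k$'s and conversely each $\xx_k$ is a difference of consecutive $\yy_j$'s, the linear span of $\{\yy_1,\dots,\yy_d\}$ equals that of $\{\xx_1,\dots,\xx_d\}$ for every $d$ of the appropriate form; choosing the indexing so that the ``cut points'' fall precisely at $d=2^n-2$ gives the claim. The main obstacle is really just the combinatorial bookkeeping in (i) arranging the change of variables so that the partial-sum projections telescope cleanly enough to stay uniformly bounded (so $\BB_0$ is genuinely a Schauder basis and not merely a basic sequence) and (ii) simultaneously respecting the prescribed spans at $d=2^n-2$; once the right transformation is written down, the estimates for $L_m[\BB_0]$ are the same one-line arguments as in Lemma~\ref{coLemma} and Lemma~\ref{l1Lemma}. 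I would therefore spend most of the write-up fixing notation for the transformation and then refer to \cite{AAW2017} for the routine verifications, exactly as the excerpt's citation suggests.
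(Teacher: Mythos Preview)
Your sketch contains a genuine gap at the heart of the argument, namely the lower bound $L_m[\BB_0]\gtrsim m$.

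First, the concrete transformation you propose, $\yy_j=\sum_{k=1}^j \xx_k$, provably fails for some type-$P$ bases. Take $\XX=\ell_1$ with the difference basis $\DD=(\dd_j)_{j=1}^\infty$, $\dd_j=\ee_j-\ee_{j-1}$. This basis is of type~$P$ since $\sum_{j=1}^k \dd_j=\ee_k$ has norm~$1$, yet applying your summing transformation gives back $\yy_j=\ee_j$, the \emph{unconditional} unit-vector basis, for which $L_m\equiv 1$. So the naive summing change of variables cannot be the right construction, and the hedge ``or rather a block version'' is doing all the work while remaining unspecified.

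Second, even granting some transformation with the features you describe, the lower-bound step is not justified. With $\yy_j=\sum_{k\le j}\xx_k$ the coordinate functionals are $\yy_j^*=\xx_j^*-\xx_{j+1}^*$, so $\xx_1=\yy_1$ has a \emph{single} nonzero $\BB_0$-coefficient, not a constant sequence of length~$m$. If instead one takes $f=\sum_{j=1}^{m}(-1)^{j+1}\yy_j$ to force many coefficients, a telescoping computation gives $f=-\sum_{i}\xx_{2i}$ (up to boundary terms), and there is no reason for $\Vert f\Vert$ to be bounded; in the $\ell_1$ example above $\Vert f\Vert_1\approx m$. More fundamentally, the asserted estimate $\Vert S_A[\BB_0](f)\Vert\gtrsim m\,\inf_j\Vert\xx_j\Vert$ appeals to the triangle inequality in the wrong direction: a sum of $m$ vectors of norm $\ge c$ need not have norm $\gtrsim mc$.

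The paper itself does not give an independent argument; it simply records that the proof of \cite{AAW2017}*{Theorem~3.3} already yields the statement. The construction there is genuinely block-wise (hence the span identity only at the dyadic levels $2^n-2$) and is designed precisely to avoid the obstruction above; it is not a cosmetic repackaging of the global summing map. If you want to write out a self-contained proof, you will need to reproduce that block construction and verify the lower bound using the specific test vectors it provides, rather than the $c_0$-style argument you sketched.
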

\begin{proof} The proof of \cite{AAW2017}*{Theorem 3.3}  gives the result, although is not  explicitly stated.
\end{proof}

The spaces $\PX^0_{p,q}$  and $\PX_{p,q}$ ($1<p<\infty$, $1\le q <\infty$) defined  in \eqref{PXSpaces} were introduced and studied by Pisier and Xu \cite{PisierXu1987}.  It is verified that  
$\PX^0_{p,q} \approx \PX_{p,q}$. Moreover, when $q>1$  these spaces have nontrivial type and nontrivial cotype  and  they are pseudo-reflexive. 

Our next proposition is a new addition  to the study of Pisier-Xu spaces, which will be used below. Recall that given $1\le q<\infty$ and a scalar sequence $\ww=(w_n)_{n=1}^\infty$, the  Lorentz sequence space $d_q(\ww)$ consists of all sequences $f$ in $c_{0}$ whose non-increasing rearrangement $(a_n)_{n=1}^\infty$ verifies
 \[
\Vert f \Vert_{d_q(\ww)}= \left( \sum_{n=1}^\infty a_n^q w_n \right)^{1/q} <\infty.
\]
In  the case when $\ww=(n ^{q/p-1})_{n=1}^\infty$  for some $1\le p<\infty$ we have that $\ell_{p,q}:=d_q(\ww)$ is the classical sequence Lorentz space of indices $p$ and $q$.

\begin{proposition}\label{PXLorentz}Let $1<p<\infty$ and $1\le q <\infty$.
Then $\ell_{p,q}\lesssim_c \PX_{p,q}^0$. Indeed, $(\ee_{2j-1})_{j=1}^\infty$ is a complemented basic sequence
isometrically equivalent to the unit vector system in $\ell_{p,q}$.
\end{proposition}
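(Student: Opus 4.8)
The plan is to realize $\ell_{p,q}$ explicitly as the closed span of the odd-indexed unit vectors $(\ee_{2j-1})_{j=1}^\infty$ inside $\PX_{p,q}^0=(v_1^0,c_0)_{\theta,q}$, with $p=1/(1-\theta)$, and to produce a bounded linear projection of $\PX_{p,q}^0$ onto that span. First I would recall the action of the real interpolation functional on the couple $(v_1^0,c_0)$: the $K$-functional $K(t,f;v_1^0,c_0)$ of a finitely supported sequence is controlled, up to constants, by a rearrangement-type expression, and for a vector supported on a set $A$ the relevant quantity reduces to something like $\sum_{j} \min(1,t/(\text{something involving the variation}))$. The key computational observation is that for a block of the form $f=\sum_{j\in A} a_j \ee_{2j-1}$ the gaps between consecutive nonzero coordinates force the $v_1$-variation to essentially ``double count'' each coefficient (each $\ee_{2j-1}$ contributes $|a_j|$ twice to the variation, once going up and once coming back down, since the even-indexed neighbours are zero), so that $\|f\|_{v_1}\approx \sum_j |a_j|$ while $\|f\|_{c_0}=\max_j|a_j|$; these are, up to a fixed constant, the norms of $\sum a_j \ee_j$ in $\ell_1$ and $c_0$ respectively. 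Hence the interpolation norm of $f$ in $(v_1^0,c_0)_{\theta,q}$ is, up to constants, the norm of $(a_j)$ in $(\ell_1,c_0)_{\theta,q}=\ell_{p,q}$ (with the standard identification $(\ell_1,c_0)_{\theta,q}=\ell_{p,q}$, $1/p=1-\theta$). A direct comparison of $K$-functionals, using the explicit formula for $K(t,\cdot;\ell_1,c_0)$ in terms of the non-increasing rearrangement, upgrades this to an \emph{isometric} equivalence once one normalizes the interpolation norm appropriately; this is where I expect the only real bookkeeping, since the constants in the $K$-functional for $v_1^0$ are what must be pinned down exactly.

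Next I would address complementation. The natural candidate projection is $R\colon \PX_{p,q}^0\to[\ee_{2j-1}:j\in\NN]$ defined on the level of sequences by a discrete averaging/interpolation operator: send a sequence $(x_n)_{n=1}^\infty$ to $\sum_j \big(\tfrac12(x_{2j-1})\big)\ee_{2j-1}$ — more precisely one wants an operator that is bounded both $v_1^0\to v_1^0$ and $c_0\to c_0$ and restricts to the identity on the odd-supported vectors, and then invoke the interpolation functor's functoriality to get boundedness on $\PX_{p,q}^0$. A convenient explicit choice is the operator $P$ that keeps the odd coordinates and zeroes the even ones; this is obviously a norm-one projection on $c_0$, but it is \emph{not} bounded on $v_1$ (killing the even coordinates can blow up the variation), so a naive coordinate projection fails. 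Instead I would use a smoothing variant: define $R(x)_{2j-1}=x_{2j-1}$ and spread nothing, but precede it by a bounded map on $v_1^0$ that ``folds'' the even coordinate $x_{2j}$ onto its odd neighbours with weights summing to one — e.g. replace $x_{2j-1}$ by $x_{2j-1}+\alpha x_{2j}+\beta x_{2j-2}$ for suitable $\alpha,\beta$ — chosen so that the composite is simultaneously bounded on $v_1^0$ and on $c_0$ and equals the identity on vectors supported on odd indices. Verifying the $v_1^0$-boundedness of this folding map is a telescoping estimate on consecutive differences; the $c_0$-boundedness is immediate.

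Having both the norm identification and a bounded projection, the proposition follows: $(\ee_{2j-1})_{j=1}^\infty$ is, after the normalization built into the interpolation norm, isometrically equivalent to the unit vector basis of $\ell_{p,q}$, and $R$ exhibits its closed span as a complemented subspace of $\PX_{p,q}^0$, so $\ell_{p,q}\lesssim_c\PX_{p,q}^0$. I would then remark that the isometric (not merely isomorphic) statement is what makes this useful downstream, and that the same computation with $c_0$ replaced by $\ell_\infty$ and $v_1^0$ by $v_1$ shows the analogous statement for $\PX_{p,q}$ — though for the applications only the $\PX_{p,q}^0$ version is needed. The main obstacle, as noted, is purely the $K$-functional bookkeeping for the bounded-variation endpoint: getting clean, constant-sharp two-sided bounds on $K(t,\sum_{j\in A}a_j\ee_{2j-1};v_1^0,c_0)$ and matching them to $K(t,\sum a_j\ee_j;\ell_1,c_0)$ on the nose.
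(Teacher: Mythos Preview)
Your overall plan---work at the endpoints $(v_1^0,c_0)$ and $(\ell_1,c_0)$ and then pass to the interpolation space---is exactly the philosophy of the paper's proof, but the paper's execution is much shorter and sidesteps both obstacles you flag. Instead of computing $K$-functionals, the paper simply writes down two linear maps on scalar sequences,
\[
L((a_j)_j)=(a_1,0,a_2,0,a_3,0,\dots),\qquad R((x_j)_j)=(x_1-x_2,\,x_3-x_4,\,x_5-x_6,\dots),
\]
verifies by inspection the four endpoint bounds $L\colon \ell_1\to v_1^0$, $L\colon c_0\to c_0$, $R\colon v_1^0\to\ell_1$, $R\colon c_0\to c_0$, and then invokes functoriality of $(\cdot,\cdot)_{\theta,q}$ together with $(\ell_1,c_0)_{\theta,q}=\ell_{p,q}$ to get bounded maps $L\colon\ell_{p,q}\to\PX_{p,q}^0$ and $R\colon\PX_{p,q}^0\to\ell_{p,q}$. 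Since $R\circ L=\Id$ on $\FF^\NN$, this single pair $(L,R)$ delivers the embedding and the complementation simultaneously; there is no $K$-functional bookkeeping at all, and your two separate tasks collapse into one two-line argument.

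Your projection step contains a genuine gap. You are right that zeroing out the even coordinates is unbounded on $v_1^0$, but your proposed repair---folding $x_{2j}$ onto its odd neighbours ``with weights summing to one''---is \emph{also} unbounded on $v_1^0$. Test any operator of the form $x_{2j-1}\mapsto x_{2j-1}+\alpha x_{2j}+\beta x_{2j-2}$ with $\alpha+\beta=1$ on the sequence $x=(1,1,\dots,1,0,0,\dots)$ with $2n$ ones: then $\Vert x\Vert_{v_1}=2$, while the image has interior odd entries equal to $1+\alpha+\beta=2$ separated by zeros, hence $v_1$-norm of order $n$. What makes the map bounded is not an averaging but a \emph{differencing}: one needs $\alpha+\beta=-1$, and the cleanest choice $\alpha=-1$, $\beta=0$ is precisely the paper's $L\circ R$. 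The key estimate $\Vert R(x)\Vert_{\ell_1}=\sum_j|x_{2j-1}-x_{2j}|\le\Vert x\Vert_{v_1}$ is then immediate, since each summand already appears among the consecutive differences defining the $v_1$-norm.
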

\begin{proof} Put $p=1/(1-\theta)$. Consider the linear maps $L,R\colon \FF^\NN\to \FF^\NN$ defined by
\begin{align}
L((a_j)_{j=1}^\infty)&=(a_1,0,a_2,0,\dots,a_j,0,a_{j+1},0,\dots), \label{Lifting}\\
R((a_j)_{j=1}^\infty)&=(a_1-a_2,a_3-a_4,\dots, a_{2j-1}-a_{2j}, \dots).\label{Retraction}
\end{align}
We have $\Vert L \colon  \ell_1 \to v_1^0\Vert\le 1, \Vert L \colon c_0 \to c_0\Vert\le 1, \Vert R \colon v_1^0 \to \ell_1\Vert \le 1$, and
$\Vert R \colon  c_0 \to c_0\Vert \le 2$. Taking into account that \[
(\ell_1,c_0)_{\theta,q}=(\ell_1,\ell_\infty)_{\theta,q}=\ell_{p,q}\] (see, e.g., \cite{BenSha1988}*{Theorem 1.9}),   interpolation gives
$\Vert L \colon \ell_{p,q} \to \PX^0_{p,q} \Vert\le 1$ and $\Vert R \colon \PX^0_{p,q}  \to \ell_{p,q} \Vert\le 2^{\theta}$.
Since $R (L(f))=f$ for every $f\in\FF^\NN$ we are done.
\end{proof}

\begin{corollary} Let $1<p<\infty$ and $1\le q <\infty$. Then
$\ell_q \lesssim_c \PX^0_{p,q}$.
\end{corollary}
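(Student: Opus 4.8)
The plan is to deduce the corollary directly from Proposition~\ref{PXLorentz} together with the well-known fact that the Lorentz sequence space $\ell_{p,q}$ contains a complemented copy of $\ell_q$. First I would invoke Proposition~\ref{PXLorentz} to obtain $\ell_{p,q}\lesssim_c \PX^0_{p,q}$, so that by transitivity of the relation ``is isomorphic to a complemented subspace of'' it suffices to show $\ell_q \lesssim_c \ell_{p,q}$.

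For the latter, the natural route is to split $\NN$ into consecutive blocks $I_k$ of cardinality growing geometrically, say $|I_k|\approx 2^k$, and to consider the vectors $u_k=\phi_k^{-1}\sum_{j\in I_k}\ee_j$ where $\phi_k=\Vert\sum_{j\in I_k}\ee_j\Vert_{\ell_{p,q}}\approx |I_k|^{1/p}$. Because the $u_k$ have pairwise disjoint supports and $\ell_{p,q}$ has a $1$-unconditional (actually symmetric) basis, a standard computation using the non-increasing rearrangement shows that $\Vert\sum_k a_k u_k\Vert_{\ell_{p,q}}\approx \Vert(a_k)_k\Vert_q$; the geometric growth of the block lengths is exactly what forces the exponent $q$ to appear in the estimate (this is the classical argument that $\ell_{p,q}$ contains $\ell_q$ isomorphically). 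The canonical projection onto $[u_k : k\in\NN]$ is obtained by averaging within each block, i.e.\ $P(f)=\sum_k \phi_k^{-1}\big(\sum_{j\in I_k}\ee_j^*(f)\big)u_k$, and its boundedness on $\ell_{p,q}$ follows from the unconditionality of the basis and H\"older's inequality applied blockwise. Hence $\ell_q$ is isomorphic to a complemented subspace of $\ell_{p,q}$.

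Combining the two steps, $\ell_q \lesssim_c \ell_{p,q}\lesssim_c \PX^0_{p,q}$, which gives the corollary.

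The only mildly delicate point is the two-sided norm estimate $\Vert\sum_k a_k u_k\Vert_{\ell_{p,q}}\approx\Vert(a_k)\Vert_q$: one must handle the non-increasing rearrangement of a sum of disjointly supported ``flat'' vectors, comparing $\sum_n (a_n^*)^q n^{q/p-1}$ (summation over the rearranged coefficients, each repeated $|I_k|$ times) with $\sum_k |a_k|^q$, and here the geometric growth $|I_k|\approx 2^k$ is what makes both the upper and lower comparisons work with constants independent of the number of blocks. Everything else is routine. Since this is a known fact about Lorentz spaces, in the paper one may simply cite a standard reference (e.g.\ \cite{AlbiacKalton2016}) for $\ell_q\lesssim_c\ell_{p,q}$ and combine it with Proposition~\ref{PXLorentz}.
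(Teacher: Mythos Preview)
Your reduction via Proposition~\ref{PXLorentz} to the claim $\ell_q\lesssim_c\ell_{p,q}$ is exactly what the paper does. The two proofs diverge only in how that claim is established.

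The paper does not carry out a block computation. Instead it argues by cases: for $q\le p$ it quotes \cite{LinTza1972}*{Proposition 4} directly, and for $q>p$ it observes that then $q'\le p'$, applies the same reference to obtain $\ell_{q'}\lesssim_c\ell_{p',q'}$, and dualizes using $(\ell_{p',q'})^*=\ell_{p,q}$ (from \cite{Allen1978}). Your route---geometric blocks $u_k=\phi_k^{-1}\mathbf{1}_{I_k}$ with $|I_k|\approx 2^k$ and the averaging projection---is a legitimate alternative that handles all $p,q$ uniformly, but be aware that the ``classical'' reference you have in mind (blocks in $d(w,q)$) is usually stated for \emph{decreasing} weights, i.e.\ the regime $q\le p$; when $q>p$ the weight $n^{q/p-1}$ is increasing and the rearrangement estimate $\Vert\sum a_ku_k\Vert_{p,q}\approx\Vert(a_k)\Vert_q$ still holds but is not quite the textbook statement, so you would need to do the computation (or dualize, as the paper does). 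Also, your written projection $P(f)=\sum_k\phi_k^{-1}\bigl(\sum_{j\in I_k}\ee_j^*(f)\bigr)u_k$ is not the averaging projection (it does not fix $u_k$ unless $p=2$); the correct formula is $P(f)=\sum_k|I_k|^{-1}\bigl(\sum_{j\in I_k}\ee_j^*(f)\bigr)\mathbf{1}_{I_k}$, whose boundedness on $\ell_{p,q}$ follows by interpolation between $\ell_1$ and $\ell_\infty$ rather than by H\"older.

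In short: your strategy is sound and arguably more self-contained, while the paper's is shorter because it outsources both halves of the case split to the literature.
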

\begin{proof}In light of Proposition~\ref{PXLorentz}, it suffices to see that $\ell_q\lesssim_c \ell_{p,q}$.
 By \cite{LinTza1972}*{Proposition 4} we have $\ell_q\lesssim_c \ell_{p,q}$
if $q\le p$ and  $\ell_{q'} \lesssim_c \ell_{p',q'}$ otherwise. We conclude the proof by dualizing  (see \cite{Allen1978}*{Theorem 1}).
\end{proof}

\section{Banach spaces having quasi-greedy bases with large conditionality constants}
\label{Main}

\noindent The common thread running through this section is the search for results that will allow us to include the spaces $Z_{p,q}$, $B_{p,q}$, and $D_{p,q}$ (see Section~\ref{Introduction}) in the list of Banach spaces possessing highly conditional quasi-greedy bases.
We recall that the matrix spaces $Z_{p,q}$ are isomorphic to Besov spaces over Euclidean spaces (see, e.g., \cite{AA2016}) and that the  mixed-norm spaces $B_{p,q}$ are isomorphic to Besov spaces over the unit interval (see, e.g., \cite{AA2017}*{Appendix 4.2}).

Apart from the trivial cases,  namely 
\[D_{q,q}\approx Z_{q,q}\approx B_{q,q}\approx \ell_q, \quad  1\le q<\infty,\] and the case \begin{equation}\label{Peuchinski}
\ell_q \approx B_{2,q}, \quad 1<q<\infty,
\end{equation}
all the above-mentioned    spaces are mutually non-isomorphic (see \cite{AA2017}).
 The isomorphism in  \eqref{Peuchinski} was obtained by Pe{\l}czy{\'n}ski  in \cite{Pel1960}  by combining the uniform complemented embeddings
\begin{equation}\label{PeuRad}
\ell_2^n \lesssim_c \ell_p^{2^n}  \text{ for } n\in\NN \text{, if } 1<p<\infty,
\end{equation}
 (which can  be obtained as a consequence of the boundedness of the Rademacher projections in  $L_p$) 
 with the Pe{\l}czy{\'n}ski  decomposition technique (see, e.g., \cite{AlbiacKalton2016}*{Theorem 2.2.3}).
Another well-known consequence  of Pe{\l}czy{\'n}ski  decomposition technique, 
is that for any unbounded sequence of integers $(d_n)_{n=1}^\infty$   we have
\begin{equation}\label{BesovIso}
B_{p,q}\approx(\oplus_{n=1}^\infty \ell_p^{d_n})_q, \quad p\in [1,\infty], \, q\in\{0\}\cup[1,\infty),
\end{equation}
(see, e.g.,  \cite{AA2017}*{Appendix 4.1}.)

First  we deal with Banach spaces of trivial  type. For that it is crucial to know  how $\ell_1$ is positioned inside the spaces.

\begin{theorem}\label{T1}Let $\XX$ be a Banach space with a basis. If $\ell_1\lesssim_c \XX$ then
$\XX$ has an almost greedy basis  $\BB$ with $\phi_m[\BB]\approx m$ for $m\in\NN$, and $L_m[\BB] \approx \log m$ for $m\ge 2$.
 \end{theorem}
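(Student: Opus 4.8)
The plan is to reduce the general statement to the two concrete building blocks already available in the preliminaries: the Lindenstrauss sequence $\LL$ in $\ell_1$ (Theorem~\ref{Linde}), which supplies logarithmic $w$-conditionality together with an almost greedy structure and fundamental function $\phi_m\approx m$, and the almost greedy bases of $\XX\oplus\ell_1$ with fundamental function of order $m$ coming from Theorem~\ref{DKKAG}. First I would use the hypothesis $\ell_1\lesssim_c\XX$ together with the fact that $\XX$ has a basis, to show that $\XX$ is isomorphic to $\XX\oplus\ell_1$; the standard Pe\l czy\'nski decomposition argument does this provided $\ell_1\lesssim_c\XX$ and $\XX\lesssim_c\XX\oplus\ell_1$ (the latter being trivial), using that $\ell_1\approx\ell_1\oplus\ell_1$ and $\ell_1\approx(\bigoplus\ell_1)_1$ absorbs the complemented copies. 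Hence it suffices to produce the desired basis on $\XX\oplus\ell_1$.

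Next I would iterate: write $\XX\oplus\ell_1\approx(\XX\oplus\ell_1)\oplus\ell_1\approx\YY\oplus\ell_1$ where $\YY:=\XX\oplus\ell_1$ already carries, by Theorem~\ref{DKKAG}, an almost greedy basis $\BB_1$ with $\phi_m[\BB_1]\approx m$. On the remaining $\ell_1$ factor I would place a basis with large $w$-conditionality constants and the right fundamental function. The natural candidate is obtained by combining the Lindenstrauss sequence with Lemma~\ref{LemmaTwo}: since $\ell_1^d[\LL]\approx\ell_1^d$ and $\phi_m[\LL]\approx m$, taking $d_n\approx D^n$ gives a quasi-greedy (indeed almost greedy) basis $\BB_2$ of $(\bigoplus_n\ell_1^{d_n})_1\approx\ell_1$ with $L_m[\BB_2]\gtrsim\log m$ and $\phi_m[\BB_2]\approx m$; alternatively Lemma~\ref{AnsoLemma} applied to the type-P basis $\DD$ of $\ell_1$ gives $L_m\approx m$, but then direct-summing with an almost greedy basis would destroy almost greediness, so the logarithmic rate from $\LL$ is the right choice. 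Then Lemma~\ref{LemmaOne} applied to $\BB_1\oplus\BB_2$, noting that both are democratic with $\phi_m[\BB_1]\approx m\approx\phi_m[\BB_2]$, yields a basis $\BB$ of $\YY\oplus\ell_1\approx\XX$ that is quasi-greedy (part (a)), democratic with $\phi_m[\BB]\approx m$ (part (b)), hence almost greedy by the Dilworth–Kalton–Kutzarova–Temlyakov characterization, and satisfies $L_m[\BB]\gtrsim L_m[\BB_2]\gtrsim\log m$ for $m\ge2$.

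For the matching upper estimate $L_m[\BB]\lesssim\log m$ I would invoke the general fact recorded in Theorem~\ref{CharacterizationSR}(a): a quasi-greedy basis satisfies $k_m[\BB]\lesssim\log m$, and since $L_m[\BB]\le k_m[\BB]$ this gives $L_m[\BB]\approx\log m$ for $m\ge2$. I expect the only genuinely delicate point to be the bookkeeping in the decomposition step — making sure that $\ell_1\lesssim_c\XX$ is strong enough to absorb all the auxiliary $\ell_1$ summands and that the isomorphism $\XX\approx\YY\oplus\ell_1$ is set up so that $\BB$ is literally a basis of a space isomorphic to $\XX$ (isomorphisms preserve all the relevant quantities up to constants, so $L_m$, $\phi_m$, quasi-greediness and almost greediness transfer). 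The rest is an assembly of Lemmas~\ref{LemmaOne} and \ref{LemmaTwo}, Theorems~\ref{DKKAG} and \ref{Linde}, and Theorem~\ref{CharacterizationSR}(a), with no new computation required.
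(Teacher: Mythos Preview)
Your proposal is correct and follows essentially the same route as the paper's own proof: build a quasi-greedy democratic basis $\BB_2$ of $\ell_1$ with $L_m\gtrsim\log m$ and $\phi_m\approx m$ via Lemma~\ref{LemmaTwo} applied to the Lindenstrauss sequence (Theorem~\ref{Linde}), take the almost greedy basis $\BB_1$ of $\XX\oplus\ell_1$ with $\phi_m\approx m$ from Theorem~\ref{DKKAG}, and combine them by Lemma~\ref{LemmaOne} on $(\XX\oplus\ell_1)\oplus\ell_1\approx\XX$. The only cosmetic differences are that the paper carries out the isomorphism bookkeeping at the end (via $B_{1,1}\approx\ell_1$ and \eqref{BesovIso}) rather than at the start, and leaves the upper bound $L_m\lesssim\log m$ implicit since it is automatic for quasi-greedy bases.
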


\begin{theorem}\label{T3}Let $\XX$ be a Banach space with a quasi-greedy basis. If
 $B_{1,0}\lesssim_c\XX$
then there is a quasi-greedy basis $\BB$ for $\XX$ with $L_m[\BB] \approx \log m$ for $m\ge 2$.
\end{theorem}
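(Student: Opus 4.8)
The plan is to combine the GOW-method (Theorem~\ref{ConditionalityGOW}) with the quasi-greedy-basis-building technique of Dilworth et al.\ (Theorem~\ref{DKKQG}), using the isomorphism \eqref{BesovIso} to rewrite $B_{1,0}$ in a form adapted to $\ell_2$-direct sums. First I would observe that, since $B_{1,0}\lesssim_c\XX$ and $\XX$ has a basis, the Pe{\l}czy{\'n}ski decomposition machinery lets us pass freely between $\XX$ and $\XX\oplus(\text{many copies of }B_{1,0})$; the real work is to produce on some space closely related to $B_{1,0}$ a quasi-greedy basis with $L_m\approx\log m$, and then to splice it into $\XX$. Using \eqref{BesovIso} with $d_n=2^n-n-2$ (or any such lacunary sequence), write $B_{1,0}\approx(\oplus_{n=1}^\infty \ell_1^{2^n-n-2})_0 = c_0(\ell_1^{2^n-n-2})$.

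The key step is to apply Theorem~\ref{ConditionalityGOW} with a suitable choice of starting basis. Take $\BB$ to be the Lindenstrauss-type basis of $c_0$ built from Lemma~\ref{AnsoLemma} applied to the unit-vector basis of $c_0$ (which is of type P): this gives a basis $\BB$ of $c_0$ with $L_m[\BB]\approx m$ and $c_0^{2^n-2}[\BB]=c_0^{2^n-2}=\ell_\infty^{2^n-2}$. Feeding $\BB$ into the GOW-method with $\delta(m)=m$ yields an almost greedy (hence quasi-greedy) basis $\BB_0=\GOW(\BB)$ of $\YY:=c_0\oplus\ell_2$ with $L_m[\BB_0]\gtrsim\delta(\log m)=\log m$, and, by the last assertion of Theorem~\ref{ConditionalityGOW},
\[
\YY^{2^n-2}[\BB_0]=c_0^n\oplus\ell_2^{2^n-n-2},\qquad n\in\NN.
\]
Now I would invoke Lemma~\ref{LemmaTwo} (or rather the more flexible Lemma~\ref{LemmaThree}, which is tailor-made for exactly this situation) with $p=q=0$, $\YY_n=c_0^n\approx\ell_\infty^n$ and $\ZZ_n=\ell_2^{2^n-n-2}$, and $d_n=2^n-2$: the hypotheses are met because $(P_n,Q_n)$ is, up to uniform constants, the identity on $\YY^{2^n-2}[\BB_0]$, $d_n\approx 2^n$, and $L_m[\BB_0]\gtrsim\log m$. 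The conclusion is that
\[
\BB_1:=\bigoplus_{n=1}^\infty \BB_0[P_n,Q_n]
\]
is a quasi-greedy basis of $(\oplus_{n=1}^\infty \ell_\infty^n)_0\oplus(\oplus_{n=1}^\infty \ell_2^{2^n-n-2})_0$ with $L_m[\BB_1]\gtrsim\log m$. By \eqref{BesovIso} the first summand is $B_{\infty,0}$ and the second is $c_0(\ell_2)\approx c_0$; but what we actually want is a quasi-greedy basis on a space of the form $c_0(\ell_1^{d_n})\oplus(\text{something})$, so I would instead run Lemma~\ref{LemmaThree} with $d_n=2^n-n-2$, $\YY_n=\ell_1^{d_n}$, $\ZZ_n$ the relevant finite-dimensional $\ell_2$-pieces, bearing in mind that the GOW construction already produces the $c_0$-summand's blocks with the right sizes once one starts from a type-P basis whose initial segments are $\ell_1^d$ — i.e.\ from the difference system $\DD$ on $\ell_1$ rather than the unit vectors of $c_0$, together with Lemma~\ref{l1Lemma}; this produces a quasi-greedy basis with $L_m\approx\log m$ on $B_{1,0}\oplus H$ for an appropriate $\ell_2$-type space $H$ with a symmetric (hence unconditional) basis.

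The final step is a soft absorption argument. Having built a quasi-greedy basis with $L_m\approx\log m$ on $B_{1,0}\oplus H$, use Lemma~\ref{LemmaOne}: direct-summing with any quasi-greedy basis preserves quasi-greediness and does not decrease the w-conditionality constants, so it suffices to show $\XX\approx \XX\oplus B_{1,0}\oplus H$ (up to adding further unconditionally-based pieces). Since $B_{1,0}\lesssim_c\XX$, we get $B_{1,0}^{\oplus\infty}\lesssim_c\XX$, and the Pe{\l}czy{\'n}ski decomposition technique together with $B_{1,0}\approx B_{1,0}^{\oplus\infty}$ in the $c_0$-sense yields $\XX\approx\XX\oplus B_{1,0}$; absorbing $H$ requires $H\lesssim_c\XX$, which holds because $H$ is built from $\ell_2^{d_n}$-blocks and $\ell_2\lesssim_c B_{1,0}$ is false in general, so here I would instead keep $H$ unconditional and appeal to Theorem~\ref{DKKQG} or Theorem~\ref{DKKAG} to absorb it: $\XX\oplus\ell_p$ (resp.\ $\XX\oplus\ell_1$) has a quasi-greedy (resp.\ almost greedy) basis, and direct-summing via Lemma~\ref{LemmaOne} keeps $L_m\approx\log m$. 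Finally the upper bound $L_m[\BB]\lesssim k_m[\BB]\lesssim\log m$ is automatic from Theorem~\ref{CharacterizationSR}(a) since the basis is quasi-greedy, so $L_m[\BB]\approx\log m$.

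The main obstacle I expect is bookkeeping the block sizes so that the output space is \emph{exactly} $\XX$ (via \eqref{BesovIso} and Pe{\l}czy{\'n}ski decomposition) rather than merely something isomorphic to a complemented piece of $\XX$ plus an unavoidable unconditionally-based remainder; this is precisely where Lemma~\ref{LemmaThree} is needed instead of Lemma~\ref{LemmaTwo}, and where one must be careful that the $\ell_2$-debris produced by the GOW-method can be reabsorbed using the Dilworth--Kalton--Kutzarova theorems without destroying the $\log m$ lower bound.
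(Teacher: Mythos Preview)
Your approach has a genuine gap that the paper avoids entirely. The GOW-method always outputs a basis of $\XX\oplus\ell_2$; when you then block it up via Lemma~\ref{LemmaThree} with $p=q=0$ starting from the difference system $\DD$ on $\ell_1$, you land on a quasi-greedy basis of
\[
\left(\bigoplus_{n=1}^\infty \ell_1^{\,n}\right)_0\oplus\left(\bigoplus_{n=1}^\infty \ell_2^{\,2^n-n-2}\right)_0\;\approx\;B_{1,0}\oplus B_{2,0}.
\]
To finish you would need $\XX\approx\XX\oplus B_{1,0}\oplus B_{2,0}$, hence $B_{2,0}\lesssim_c\XX$. Nothing in the hypotheses gives this: you only know $B_{1,0}\lesssim_c\XX$, and $\ell_2^n$ is \emph{not} uniformly complemented in $\ell_1^{d}$ (the relation \eqref{PeuRad} requires $1<p<\infty$), so $B_{2,0}\lesssim_c B_{1,0}$ fails. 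Your fallback of invoking Theorems~\ref{DKKAG} or \ref{DKKQG} does not help either: those theorems produce bases on $\XX\oplus\ell_1$ or $\XX\oplus\ell_p$, and again you cannot conclude $\ell_1\lesssim_c\XX$ or $\ell_p\lesssim_c\XX$ from $B_{1,0}\lesssim_c\XX$ alone (indeed $B_{1,0}$ is a $c_0$-sum of finite-dimensional blocks and does not contain a complemented copy of any infinite-dimensional $\ell_p$). So the ``$\ell_2$-debris'' you flag at the end is not merely bookkeeping: it is an obstruction you cannot clear with the tools at hand.

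The paper's proof bypasses GOW completely. The key observation you are missing is that the Lindenstrauss basic sequence $\LL$ in $\ell_1$ (Theorem~\ref{Linde}) is \emph{already} a quasi-greedy basic sequence with $L_m[\LL]\approx\log m$ and with $\ell_1^d[\LL]\approx\ell_1^d$ uniformly. Hence Lemma~\ref{LemmaTwo} with $p=0$ applied to $\LL$ and $d_n=2^n$ directly yields a quasi-greedy basis $\BB_0=\bigoplus_{n}\LL[2^n]$ of $(\bigoplus_n\ell_1^{2^n})_0\approx B_{1,0}$ with $L_m[\BB_0]\approx\log m$, and no extraneous summand appears. Direct-summing $\BB_0$ with the given quasi-greedy basis of $\XX$ via Lemma~\ref{LemmaOne}, and using $\XX\oplus B_{1,0}\approx\XX$, finishes the argument in two lines.
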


\begin{proof}[Proof of Theorems~\ref{T1} and \ref{T3}] Let $p\in \{0,1\}$.
 For  $p=0$, assume that the hypotheses of Theorem~\ref{T3} hold (respectively, assume that for $p=1$ the hypotheses of Theorem~\ref{T1} hold).

By Lemma~\ref{LemmaTwo} and Theorem~\ref{Linde} 
$\BB_0=\bigoplus_{n=1}^\infty  \LL[2^n]$ is a quasi-greedy basis for $\YY:=(\bigoplus_{n=1}^\infty \ell_1^{2^n}[\LL])_p$ such that $L_m[\BB_0]\approx \log m$ for $m\ge 2$. Moreover, in the case when $p=1$, $\BB_0$ is democratic with 
$\phi_m[\BB_0]\approx m$.  

In the case when $p=0$ the hipothesis give a quasi-greedy basis $\BB_1$ for $\SSB:=\XX$ (respectively,  when $p=1$ Theorem~\ref{DKKAG} gives a quasi-greedy basis $\BB_1$ for $\SSB:=\XX\oplus\ell_1$ that is democratic with $\phi_m[\BB_1]\approx m$). 

 By Lemma~\ref{LemmaOne}, $\BB_2:=\BB_1\oplus\BB_0$ is a quasi-greedy basis for $\ZZ:= \SSB\oplus\YY$ such that 
 $L_m[\BB_2]\approx \log m$. Moreover, in the case when $p=1$, $\BB_2$ is democratic with $\phi_m[\BB_2]\approx m$. 
Combining Theorem~\ref{Linde} with the isomorphisms \eqref{BesovIso}, 
 $\ell_1\oplus B_{1,1}\approx B_{1,1}$ and  $ B_{1,p}\oplus B_{1,p}\approx B_{1,p}$  give
 \[
 \ZZ\approx \SSB\oplus \left(\bigoplus_{n=1}^\infty \ell_1^{2^n}\right)_p \approx \SSB\oplus B_{1,p}\approx
 \XX \oplus B_{1,p} \approx \XX,
 \]
and, so, the proof is over. \end{proof}

\begin{example}\
\begin{enumerate}
\item[(i)] The list of Banach spaces for which Theorem~\ref{T1} applies includes
 $D_{1,p}$, $Z_{p,1}$ and $Z_{1,p}$ for $p\in\{0\}\cup(1,\infty)$, 
$B_{p,1}$ for $p\in(1,\infty]$, $\ell_1$, $L_1[0,1]$, the Hardy space $H_1$, and the Lorentz sequence spaces $d_1(\ww)$ for $\ww$ decreasing.
\item[(ii)] By invoking \cite{LinPel1968}*{Proposition 7.3} and  \cite{JRZ1971}*{Theorem 5.1}, 
 Theorem~\ref{T1} applies to any separable $\SL_1$-space. Indeed, since $\SL_1$-spaces are GT-spaces (see \cite{LinPel1968}*{Theorem 4.1}), in light of \cite{DST2012}*{Theorem 4.2}, the conclusion on democracy is redundant for such spaces.
 
 \item[(iii)] Theorem~\ref{T3} moves the space   $B_{1,0}$   to the list of Banach spaces possessing a quasi-greedy basis as conditional as possible.
 \end{enumerate}
 \end{example}
 
 \begin{remark} We would like to point out that the argument used in the above proof also works for $p\in(1,\infty)$. However, we will use an alternative method for including the spaces $B_{1,p}$ is our list.
 \end{remark}
  
The next five theorems contain  our study of the case of Banach spaces with trivial cotype. We emphasize that  the lack of a conditional quasi-greedy basis in $c_0$ (see \cite{DKK2003}*{Corollary 8.6}) constitutes an added difficulty when tackling this task.

\begin{theorem}\label{T4} Let $\XX$ be a Banach space with a basis of type P. If $\ell_2\lesssim_c \XX$, then
 $\XX$ has an almost greedy $\BB$ with $\phi_m[\BB]\approx m^{1/2}$ for $m\in\NN$ and
$L_m[\BB]\approx \log m$ for $m\ge 2$.\end{theorem}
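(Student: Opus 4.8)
The plan is to mimic the proof of Theorems~\ref{T1} and \ref{T3}, replacing the Lindenstrauss sequence with a basis of type P and using the direct-sum machinery of Section~\ref{Preliminaries} together with the GOW-method. First I would invoke Lemma~\ref{AnsoLemma}: since $\XX$ has a basis $\BB$ of type P, there is a basis $\BB_0$ for $\XX$ with $L_m[\BB_0]\approx m$ and $\XX^{2^n-2}[\BB_0]=\XX^{2^n-2}[\BB]$ for $n\ge 2$. The point of retaining the finite-dimensional subspaces is that, since $\BB$ is of type P, each $\XX^d[\BB]$ is uniformly isomorphic to $\ell_\infty^d$ (this is the content of the $c_0$/$\ell_1$ pattern in Lemmas~\ref{coLemma} and \ref{l1Lemma} and their abstraction), so that $\XX$ itself contains a nicely positioned copy of $B_{\infty,0}=c_0(\ell_\infty^n)$-type structure; in any case what matters is that after applying the GOW-method we land back inside $\XX\oplus\ell_2\approx\XX$.

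Next I would apply Theorem~\ref{ConditionalityGOW} (the GOW-method) to the basis $\BB_0$ with $\delta(t)=t$: this produces an almost greedy basis $\BB_1=\GOW(\BB_0)$ for $\YY:=\XX\oplus\ell_2$ with $\phi_m[\BB_1]\approx m^{1/2}$ and $L_m[\BB_1]\gtrsim\delta(\log m)=\log m$ for $m\ge 2$. The upper estimate $L_m[\BB_1]\lesssim k_m[\BB_1]\lesssim\log m$ comes for free from part~(a) of Theorem~\ref{CharacterizationSR} since $\BB_1$, being almost greedy, is quasi-greedy; hence $L_m[\BB_1]\approx\log m$. The remaining task is to identify $\YY$ with $\XX$. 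Here I would use the hypothesis $\ell_2\lesssim_c\XX$: by the Pe\l czy\'nski decomposition technique (as in the final display of the proof of Theorems~\ref{T1} and \ref{T3}), $\XX\approx\XX\oplus\ell_2$, because $\ell_2\approx\ell_2\oplus\ell_2$ and $\ell_2$ is complemented in $\XX$. This gives an almost greedy basis on $\XX$ with the desired fundamental function and conditionality constants.

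The one subtlety — and the step I expect to be the main obstacle — is making the Pe\l czy\'nski decomposition go through cleanly, i.e.\ verifying $\XX\oplus\ell_2\approx\XX$ from $\ell_2\lesssim_c\XX$ alone. One needs $\XX\approx\XX\oplus\XX$ or at least that $\ell_2$ can be ``split off'' and reabsorbed; the standard argument writes $\XX\approx\ell_2\oplus\WW$ for some complemented $\WW$, and then $\XX\oplus\ell_2\approx\ell_2\oplus\ell_2\oplus\WW\approx\ell_2\oplus\WW\approx\XX$, which uses only $\ell_2\approx\ell_2\oplus\ell_2$ and is therefore unconditionally valid. So in fact the decomposition is routine. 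A second point to check is that the GOW-method really only needs $L_m[\BB_0]\ge\delta(m)$ with $\delta$ doubling and increasing, which $\delta(t)=t$ satisfies, and that the hypotheses of Theorem~\ref{ConditionalityGOW} are met by $\BB_0$ from Lemma~\ref{AnsoLemma}; both are immediate. Thus the proof reduces to a chain of citations with no genuinely hard calculation, the ``hardest'' being to package the isomorphism $\XX\oplus\ell_2\approx\XX$ correctly.

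\begin{proof}[Proof of Theorem~\ref{T4}] Let $\BB$ be a basis of type P for $\XX$. By Lemma~\ref{AnsoLemma} there is a basis $\BB_0$ for $\XX$ with $L_m[\BB_0]\approx m$ for $m\in\NN$. Applying the GOW-method (Theorem~\ref{ConditionalityGOW}) to $\BB_0$ with the doubling increasing function $\delta(t)=t$, we obtain an almost greedy basis $\BB_1=\GOW(\BB_0)$ for $\YY:=\XX\oplus\ell_2$ with $\phi_m[\BB_1]\approx m^{1/2}$ for $m\in\NN$ and $L_m[\BB_1]\gtrsim\log m$ for $m\ge 2$. Since an almost greedy basis is quasi-greedy, Theorem~\ref{CharacterizationSR}(a) yields $L_m[\BB_1]\le k_m[\BB_1]\lesssim\log m$ for $m\ge 2$, so $L_m[\BB_1]\approx\log m$. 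Finally, since $\ell_2\lesssim_c\XX$, write $\XX\approx\ell_2\oplus\WW$ with $\WW$ a complemented subspace of $\XX$; then
\[
\XX\oplus\ell_2\approx\ell_2\oplus\ell_2\oplus\WW\approx\ell_2\oplus\WW\approx\XX,
\]
using $\ell_2\approx\ell_2\oplus\ell_2$. Hence $\BB_1$ transfers to an almost greedy basis for $\XX$ with the stated properties.
\end{proof}
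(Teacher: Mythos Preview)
Your proof is correct and follows essentially the same route as the paper: apply Lemma~\ref{AnsoLemma} to the type-P basis to get $\BB_0$ with $L_m[\BB_0]\approx m$, apply the GOW-method (Theorem~\ref{ConditionalityGOW}) to obtain an almost greedy basis on $\XX\oplus\ell_2$, and then use $\ell_2\lesssim_c\XX$ together with $\ell_2\oplus\ell_2\approx\ell_2$ to identify $\XX\oplus\ell_2\approx\XX$. The only differences are cosmetic: you spell out the upper bound $L_m\le k_m\lesssim\log m$ via Theorem~\ref{CharacterizationSR}(a) and the decomposition $\XX\approx\ell_2\oplus\WW$ explicitly, whereas the paper leaves these implicit; the remark in your preamble about $\XX^d[\BB]\approx\ell_\infty^d$ is neither needed nor in general true, and you rightly do not use it in the actual proof.
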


\begin{theorem}\label{T5}Let $\XX$ be a Banach space with a basis. Assume that $\ell_2\lesssim_c \XX$ and
either $c_0\lesssim_c \XX$ or $\ell_1\lesssim \XX$. Then $\XX$ has an almost greedy basis $\BB$ with $\phi_m[\BB]\approx m^{1/2}$ for $m\in\NN$ and $L_m[\BB]\approx \log m$ for $m\ge 2$.\end{theorem}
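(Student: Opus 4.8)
The plan is to reduce Theorem~\ref{T5} to Theorem~\ref{T4} by extracting, from the hypotheses $\ell_2 \lesssim_c \XX$ together with either $c_0 \lesssim_c \XX$ or $\ell_1 \lesssim \XX$, a basis of type P for a space isomorphic to $\XX$, and then invoking Theorem~\ref{T4}. The key observation is that Theorem~\ref{T4} already handles any space with a type P basis containing a complemented copy of $\ell_2$; so the entire content of Theorem~\ref{T5} is to manufacture such a structure out of the weaker hypotheses available here.

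First I would treat the case $c_0 \lesssim_c \XX$. Write $\XX \approx c_0 \oplus \WW$ for some complemented subspace $\WW$ with a basis (the complement of a complemented subspace with a basis in a space with a basis has a basis, by a standard decomposition/Pe{\l}czy\'nski argument). Since $c_0 \approx B_{\infty,0}$-type arguments give $c_0 \approx c_0 \oplus c_0$, and $c_0$ carries the unit vector system, which is a basis of type P, I would form on $c_0 \oplus \WW$ a basis that is the direct sum of a type P basis of $c_0$ with a basis of $\WW$; by Lemma~\ref{AnsoLemma} applied to the $c_0$-factor (whose unit vector basis is of type P) one replaces it by a basis $\BB_0$ of $c_0$ with $L_m[\BB_0] \approx m$, and in particular $L_m[\BB_0] \gtrsim \log m$. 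Then the direct sum basis on $c_0 \oplus \WW$ has w-conditionality constants $\gtrsim \log m$ by Lemma~\ref{LemmaOne}. The slicker route, however, is: the direct sum of the type P basis of $c_0$ with any basis of $\WW$ is itself a basis of type P of $\XX \approx c_0 \oplus \WW$ (the partial sums stay bounded on the $c_0$ coordinates and the $\WW$ coordinates are eventually constant once we pass the interleaving — one must order the direct sum so that partial sums remain controlled; interleaving the two bases keeps $\sup_k \Vert \sum_{j=1}^k \xx_j\Vert < \infty$ only if the $\WW$-basis is itself of type P, which need not hold). So the clean statement to use is Lemma~\ref{AnsoLemma}: $c_0$ has a type P basis, hence a basis $\BB_0$ with $L_m[\BB_0]\approx m$; then $\BB_0 \oplus (\text{basis of }\WW)$ is a basis of $\XX$ with $L_m \gtrsim \log m$ by Lemma~\ref{LemmaOne}, and now the GOW-method is not what we want — rather we want to feed this into the machinery behind Theorem~\ref{T4}, i.e. the $\XX \oplus \ell_2$ route via Theorem~\ref{ConditionalityGOW} together with $\ell_2 \lesssim_c \XX$ and $\ell_2 \approx \ell_2 \oplus \ell_2$. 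Concretely: since $\ell_2 \lesssim_c \XX$, $\XX \approx \XX \oplus \ell_2$, so Theorem~\ref{ConditionalityGOW} applied to the basis just constructed produces an almost greedy basis of $\XX \oplus \ell_2 \approx \XX$ with $\phi_m \approx m^{1/2}$ and $L_m \gtrsim \delta(\log m)$ where $\delta(m) = \log m$; but $\delta(\log m) = \log\log m$, which is \emph{not} $\gtrsim \log m$. This is the subtlety, and it is exactly why Theorem~\ref{T4} is stated for type P bases: there Lemma~\ref{AnsoLemma} gives $L_m \approx m$ (not $\log m$), so that $\delta(m)=m$ and $\delta(\log m) = \log m$, which is the desired bound. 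Hence the correct plan is to produce a \emph{type P} basis, not merely a basis with $L_m \approx m$.

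So the real task is: under the hypotheses of Theorem~\ref{T5}, find a space isomorphic to $\XX$ with a type P basis and a complemented $\ell_2$. For the case $c_0 \lesssim_c \XX$: $\XX \approx c_0 \oplus \WW$, and $c_0$ has the type P unit vector basis; I claim $c_0 \oplus \WW$ (with a suitable basis) has a type P basis. The trick, following the spirit of Lemma~\ref{AnsoLemma} and the type P constructions in the paper, is that $c_0 \approx (\oplus_n \ell_\infty^n)_0 = B_{\infty,0}$, and using $c_0 \approx c_0 \oplus c_0$ one can "absorb" $\WW$: but $\WW$ need not have a type P basis. Instead, recall the summing basis trick — actually the cleanest is to note $\XX \approx c_0 \oplus \WW \approx c_0 \oplus c_0 \oplus \WW$ and build the type P basis on the first $c_0$ factor by the summing system $\SSS$ (Lemma~\ref{Summingc0}), which is genuinely a conditional basis of $c_0$ and is of type P (partial sums of $\sss_j$ are bounded in $c_0$? — no, $\sum_{j=1}^k \sss_j$ has entries growing; one must check: $\sss_j = \sum_{i\le j}\ee_i$, so $\sum_{j=1}^k \sss_j = \sum_{i=1}^k (k-i+1)\ee_i$, unbounded — so $\SSS$ is not type P). The honest type P basis of $c_0$ is just $(\ee_j)$. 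So: take the direct sum basis $(\ee_j) \oplus \BB_\WW$ on $c_0 \oplus \WW$; this is type P iff $\BB_\WW$ is, which fails in general. The resolution the authors must use: apply Lemma~\ref{AnsoLemma} differently, or — more likely — route through $\ell_1$. For the case $\ell_1 \lesssim \XX$ (not complemented): this is where the difference system $\DD$ of $\ell_1$ (Lemma~\ref{l1Lemma}, a type P basis) and the mixed-norm absorption $B_{1,0} \lesssim_c \XX$-type arguments, or rather Theorem~\ref{DKKAG}/\ref{DKKQG}, enter. I would: use $\ell_1 \lesssim \XX$ and a Pe{\l}czy\'nski-type argument to get $\ell_1 \lesssim_c \XX \oplus \ell_1$; meanwhile $\XX$ has a basis, so $\XX \oplus \ell_1$ has, by Theorem~\ref{DKKAG}, an almost greedy basis with $\phi_m \approx m$ — but we want $\phi_m \approx m^{1/2}$ and type P, so that's the wrong fundamental function.

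Given these tensions, I expect the actual argument to go: construct on a $c_0$ (resp.\ $\ell_1$) factor a type P basis $\BB'$ of $c_0$ (resp.\ of $\ell_1$, namely $\DD$) with $L_m[\BB'] \approx m$ via Lemma~\ref{AnsoLemma}; form $\BB'' := \BB' \oplus (\text{some basis of the complement})$ so that, using Lemma~\ref{LemmaTwo} or Lemma~\ref{LemmaThree} to shuffle $\ell_1^{2^n}$ blocks in, and the isomorphism $\XX \approx \XX \oplus B_{1,0}$ (resp.\ $\approx \XX \oplus B_{1,0}$ when $\ell_1 \lesssim \XX$, using the decomposition method to pass from $\ell_1 \lesssim \XX$ to $B_{1,0} \lesssim_c \XX \oplus B_{1,0}$), one gets a \emph{type P} basis $\widehat\BB$ of $\XX$ with $L_m[\widehat\BB]\approx m$; then since $\ell_2 \lesssim_c \XX$ gives $\XX \approx \XX \oplus \ell_2$, apply Theorem~\ref{ConditionalityGOW} (with $\delta(m)=m$) to get the almost greedy basis of $\XX \oplus \ell_2 \approx \XX$ with $\phi_m \approx m^{1/2}$ and $L_m \gtrsim \delta(\log m) = \log m$, while part (a) of Theorem~\ref{CharacterizationSR} gives the reverse $L_m \lesssim k_m \lesssim \log m$; combining yields $L_m \approx \log m$. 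I would therefore structure the proof in two parallel cases ($c_0 \lesssim_c \XX$ using the $c_0$ unit vector basis, $\ell_1 \lesssim \XX$ using the difference system $\DD$ and an $\ell_1$-absorption argument), in each producing via Lemmas~\ref{AnsoLemma}, \ref{LemmaOne}, \ref{LemmaTwo} a type P basis with $L_m \approx m$ of a space isomorphic to $\XX$, and then finish uniformly with the GOW-method plus Theorem~\ref{CharacterizationSR}(a). \textbf{The main obstacle} is the $\ell_1 \lesssim \XX$ (uncomplemented) case: one must upgrade the non-complemented embedding $\ell_1 \hookrightarrow \XX$ to something usable — presumably showing $\XX \approx \XX \oplus B_{1,0}$ or $\XX \approx \XX \oplus \ell_1$ via a decomposition argument exploiting that $\XX$ is a space with a basis into which $\ell_1$ embeds (so that one can split off an $\ell_1$ or $B_{1,0}$ complemented summand after the operation), and then ensuring the resulting basis is genuinely of type P so that Theorem~\ref{T4}'s hypotheses, not merely a weaker $L_m \approx m$ with the lossy $\delta \mapsto \delta(\log\cdot)$, can be applied.
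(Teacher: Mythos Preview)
Your proposal contains a computational slip that led you to abandon the correct approach. When you write ``$\BB_0 \oplus (\text{basis of }\WW)$ is a basis of $\XX$ with $L_m \gtrsim \log m$ by Lemma~\ref{LemmaOne}'', you have undersold the conclusion: since $L_m[\BB_0]\approx m$, Lemma~\ref{LemmaOne} with $\delta(m)=m$ gives $L_m[\BB_0\oplus\BB_\WW]\gtrsim m$, not merely $\gtrsim\log m$. Consequently, applying Theorem~\ref{ConditionalityGOW} with $\delta(m)=m$ yields $L_m[\GOW(\BB_0\oplus\BB_\WW)]\gtrsim\delta(\log m)=\log m$, which is exactly the desired bound. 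There is no $\log\log m$ loss. This is precisely the paper's argument: take any basis of $\XX$, take the summing system on $c_0$ (resp.\ the difference system on $\ell_1$) with $L_m\approx m$, form the direct sum on $\XX\oplus\ZZ$ (which has $L_m\approx m$ by Lemma~\ref{LemmaOne}), apply GOW to land in $\XX\oplus\ZZ\oplus\ell_2$, and use $\ell_2\lesssim_c\XX$, $\ZZ\lesssim_c\XX$, $\ell_2\oplus\ell_2\approx\ell_2$, $\ZZ\oplus\ZZ\approx\ZZ$ to identify this with $\XX$.

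So the entire detour about manufacturing a type~P basis on all of $\XX$ in order to invoke Theorem~\ref{T4} is unnecessary. You do not need a type~P basis of $\XX$; you only need \emph{some} basis of $\XX\oplus\ZZ$ with $L_m\approx m$, and the direct-sum construction delivers that immediately from the highly conditional basis on the $\ZZ$ factor alone.

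Your concern about the hypothesis $\ell_1\lesssim\XX$ (without complementation) is legitimate: the paper's proof, in the line ``$\VV=\XX\oplus\ZZ\oplus\ell_2\approx\XX$'', tacitly uses $\XX\oplus\ell_1\approx\XX$, which follows from Pe{\l}czy\'nski decomposition once $\ell_1\lesssim_c\XX$. The statement as printed appears to be a typo for $\ell_1\lesssim_c\XX$; the paper offers no mechanism for upgrading a non-complemented copy of $\ell_1$, and all the examples listed under Theorem~\ref{T5} satisfy the complemented version.
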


\begin{theorem}\label{T6}Let $\XX$ be a Banach space with a basis. Suppose that 
either $B_{\infty,2}\lesssim_c \XX$ or $B_{1,2}\lesssim \XX$. Then $\XX$ has an almost greedy $\BB$ with $\phi_m[\BB]\approx m^{1/2}$ for $m\in\NN$ and $L_m[\BB]\approx \log m$ for $m\ge 2$.\end{theorem}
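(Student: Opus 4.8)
My plan is to reduce each of the two hypotheses to the situation already handled by Theorems~\ref{T4} and~\ref{T5}, so that the bulk of the work is a sequence of isomorphic identifications rather than new basis constructions. Recall that by \eqref{BesovIso} we have $B_{\infty,2}\approx(\oplus_{n=1}^\infty\ell_\infty^{d_n})_2$ and $B_{1,2}\approx(\oplus_{n=1}^\infty\ell_1^{d_n})_2$ for any unbounded sequence $(d_n)_{n=1}^\infty$. The first thing I would record is that both of these spaces contain $\ell_2$ complementably: in the $B_{\infty,2}$ case this is immediate since $\ell_2=(\oplus_{n=1}^\infty\FF)_2$ sits complementably inside $(\oplus_{n=1}^\infty\ell_\infty^{d_n})_2$ by projecting each block onto its first coordinate; similarly for $B_{1,2}$. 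Hence whenever $B_{\infty,2}\lesssim_c\XX$ or $B_{1,2}\lesssim\XX$ we get $\ell_2\lesssim_c\XX$.

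The case $B_{1,2}\lesssim\XX$ is then the easier one: I would like to invoke Theorem~\ref{T5}, which only requires $\ell_2\lesssim_c\XX$ together with ``$c_0\lesssim_c\XX$ or $\ell_1\lesssim\XX$''. Since $\ell_1$ embeds (isomorphically, indeed isometrically) into $B_{1,2}$ — take the first-coordinate vectors of each block, which span an isometric copy of $\ell_1(\ell_2)$-type... more simply, $\ell_1\lesssim\ell_1(\ell_1)\approx\ell_1$ and $\ell_1\lesssim(\oplus_n\ell_1^{d_n})_2$ is a standard fact — the hypothesis $B_{1,2}\lesssim\XX$ forces $\ell_1\lesssim\XX$, and combined with $\ell_2\lesssim_c\XX$ this is exactly the hypothesis of Theorem~\ref{T5}. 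So this half follows with essentially no extra work.

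For the harder half, $B_{\infty,2}\lesssim_c\XX$, the point is that $B_{\infty,2}$ contains a complemented copy of $c_0$: the block $\ell_\infty^{d_n}$ with $d_n\to\infty$ makes $c_0\approx(\oplus_n\ell_\infty^{d_n})_0$... but here the outer norm is $\ell_2$, not $c_0$, so one must be more careful. Instead I would argue that $B_{\infty,2}$ has a \emph{basis of type P}: concretely, using $B_{\infty,2}\approx(\oplus_{n=1}^\infty\ell_\infty^{d_n})_2$ with $d_n\approx 2^n$, the summing basis $\SSS$ on each $\ell_\infty^{d_n}=c_0^{d_n}[\SSS]$ (Lemma~\ref{Summingc0}) assembles via Lemma~\ref{LemmaTwo} (with $p=2$, $\delta(t)=t$) into a basis $\BB_0$ of $B_{\infty,2}$ with $L_m[\BB_0]\approx m$. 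This $\BB_0$ is of type P because the partial sums of the summing system are uniformly bounded in $\ell_\infty$-norm and the outer $\ell_2$-sum of a single nonzero block is controlled. Then $B_{\infty,2}$, being complemented in $\XX$ and carrying a type-P basis, lets me apply the machinery behind Theorem~\ref{T4}: more precisely, since $\XX$ has a basis and $B_{\infty,2}\lesssim_c\XX$ with $B_{\infty,2}$ having a type-P basis and $\ell_2\lesssim_c B_{\infty,2}\lesssim_c\XX$, I would run the Theorem~\ref{T4} argument with $B_{\infty,2}$ in the role of the type-P space and absorb $\XX$ using a Pe{\l}czy{\'n}ski decomposition of the form $B_{\infty,2}\oplus B_{\infty,2}\approx B_{\infty,2}$ (a consequence of \eqref{BesovIso}) and the GOW/Lemma~\ref{LemmaOne} apparatus, exactly as in the proofs of Theorems~\ref{T1}, \ref{T3}, \ref{T4}. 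The main obstacle I anticipate is the bookkeeping in this last step: one must exhibit the type-P basis on $B_{\infty,2}$ explicitly, feed it through Lemma~\ref{AnsoLemma} and Theorem~\ref{ConditionalityGOW} to produce an almost greedy basis with $\phi_m\approx m^{1/2}$ and $L_m\approx\log m$ on $B_{\infty,2}\oplus\ell_2$, and then verify the absorption isomorphism $\XX\approx\XX\oplus B_{\infty,2}\oplus\ell_2$ using that $\XX$ already has a basis and $B_{\infty,2}$ is complemented in it — the $\ell_2$ summand being harmless since $\ell_2\lesssim_c B_{\infty,2}$. Everything else is a recombination of results already in hand.
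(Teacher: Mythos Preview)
Your reduction strategy collapses because both $B_{1,2}$ and $B_{\infty,2}$ are \emph{reflexive}: each is an $\ell_2$-direct sum of finite-dimensional spaces. This kills both halves of your argument. For the $B_{1,2}$ case, reflexivity immediately rules out $\ell_1\lesssim B_{1,2}$, so you cannot feed the hypothesis into Theorem~\ref{T5}. For the $B_{\infty,2}$ case, a reflexive space cannot carry a basis of type~P at all: any basis of a reflexive space is boundedly complete, so $\sup_k\Vert\sum_{j=1}^k\xx_j\Vert<\infty$ would force $\sum_j\xx_j$ to converge, contradicting $\inf_j\Vert\xx_j\Vert>0$. (Incidentally, your specific claim that ``the partial sums of the summing system are uniformly bounded in $\ell_\infty$-norm'' is also false: $\Vert\sum_{j=1}^k\sss_j\Vert_\infty=k$. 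It is the unit-vector system of $c_0$ that is type~P, not $\SSS$.) So neither Theorem~\ref{T4} nor Theorem~\ref{T5} is available, and the obstruction is structural, not just bookkeeping.

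The paper's route is genuinely different and more delicate. It does \emph{not} try to locate $c_0$, $\ell_1$, or a type-P basis inside the target space. Instead it works at the level of $c_0$ (resp.\ $\ell_1$) itself: apply GOW to the summing (resp.\ difference) basis to obtain an almost greedy basis $\BB_1$ on $c_0\oplus\ell_2$ (resp.\ $\ell_1\oplus\ell_2$) with $L_m\approx\log m$, and crucially record that the finite-dimensional truncations satisfy $\VV^{2^n-2}[\BB_1]=\ell_2^{2^n-n-2}\oplus\ell_r^n$ with $r\in\{\infty,1\}$. Then take the $\ell_2$-sum of these truncations via Lemma~\ref{LemmaTwo} (case $p=2$), obtaining a quasi-greedy basis $\BB_2$ on
\[
\left(\bigoplus_{n=1}^\infty(\ell_2^{2^n-n-2}\oplus\ell_r^n)\right)_2\approx B_{2,2}\oplus B_{r,2}\approx \ell_2\oplus B_{r,2},
\]
which is democratic with $\phi_m\approx m^{1/2}$ and still has $L_m\approx\log m$. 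Finally, direct-sum $\BB_2$ with a GOW basis $\BB_3$ on $\XX\oplus\ell_2$ (which is almost greedy with $\phi_m\approx m^{1/2}$), and use $B_{r,2}\oplus B_{r,2}\approx B_{r,2}$ together with $B_{r,2}\lesssim_c\XX$ to identify the resulting space with $\XX$. The key idea you are missing is that the non-superreflexive ingredient ($c_0$ or $\ell_1$) enters only through the \emph{finite-dimensional} pieces $\ell_r^n$, which are then reassembled in the $\ell_2$-sense to land in the reflexive space $B_{r,2}$.
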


\begin{theorem}\label{T7} Let $\XX$ be a Banach space with a basis and $1<p<\infty$. Assume that  either 
$B_{\infty,p}\lesssim_c \XX$ or $B_{1,p}\lesssim_c \XX$. Then $\XX$ has a quasi-greedy basis $\BB$ such that  $L_m[\BB]\approx \log m$ for $m\ge 2$.
\end{theorem}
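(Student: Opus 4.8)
The plan is to build a quasi-greedy basis with $L_m\approx\log m$ on a space of the form $B\oplus\ell_p$, where $B:=B_{\infty,p}$ (resp.\ $B:=B_{1,p}$), and then to absorb it into $\XX$ by a decomposition argument. Write $\UU_0=c_0$ (resp.\ $\UU_0=\ell_1$) and let $\BB^*$ denote the summing system $\SSS$ of $c_0$ (resp.\ the difference system $\DD$ of $\ell_1$); by Lemma~\ref{Summingc0} (resp.\ Lemma~\ref{l1Lemma}) it is a basis of type $P$ with $L_m[\BB^*]\approx m$ and $\UU_0^d[\BB^*]=\ell_\infty^d$ (resp.\ $=\ell_1^d$) for every $d\in\NN$. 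Since $L_m[\BB^*]\ge\delta(m)$ for the doubling increasing function $\delta(m):=cm$ ($c>0$ small), Theorem~\ref{ConditionalityGOW} applies to $\BB^*$ (note $\BB^*$ is \emph{not} quasi-greedy, but the theorem does not require it): the basis $\BB_0:=\GOW(\BB^*)$ is an almost greedy, hence quasi-greedy, basis of $\WW:=\UU_0\oplus\ell_2$ with $L_m[\BB_0]\gtrsim\delta(\log m)\approx\log m$, and its last assertion (with $n$ shifted to $n+1$) gives
\[
\WW^{2^{n+1}-2}[\BB_0]=\ell_\infty^{n+1}\oplus\ell_2^{2^{n+1}-n-3}\quad(\text{resp.\ }\ell_1^{n+1}\oplus\ell_2^{2^{n+1}-n-3}),\qquad n\in\NN.
\]

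Next I would feed $\BB_0$ into Lemma~\ref{LemmaThree}. Set $d_n=2^{n+1}-2$, so $d_n\approx 2^n$, and let $P_n\colon\WW^{d_n}[\BB_0]\to\YY_n$ and $Q_n\colon\WW^{d_n}[\BB_0]\to\ZZ_n$ be the coordinate projections determined by the (uniform, in fact isometric) splitting above, where $\YY_n=\ell_\infty^{n+1}$ (resp.\ $\ell_1^{n+1}$) and $\ZZ_n=\ell_2^{2^{n+1}-n-3}$ (with $\ZZ_1=\{0\}$, which Lemma~\ref{LemmaThree} permits). Applying Lemma~\ref{LemmaThree} with the exponent of the $\YY$-sum equal to the given $p$ and---this is the key choice---also $q:=p$ for the $\ZZ$-sum, and with $\delta(m)=\log m$, we obtain a quasi-greedy basis $\BB_1:=\bigoplus_n\BB_0[P_n,Q_n]$ of
\[
\Big(\bigoplus_{n=1}^\infty\ell_\infty^{n+1}\Big)_p\oplus\Big(\bigoplus_{n=1}^\infty\ell_2^{2^{n+1}-n-3}\Big)_p
\]
(resp.\ with $\ell_1^{n+1}$ replacing $\ell_\infty^{n+1}$) with $L_m[\BB_1]\gtrsim\log m$; since $L_m\le k_m$ and $\BB_1$ is quasi-greedy, Theorem~\ref{CharacterizationSR}(a) upgrades this to $L_m[\BB_1]\approx\log m$ for $m\ge 2$. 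By \eqref{BesovIso} the first summand is isomorphic to $B$, and---this is where $q=p$ pays off---the second is isomorphic to $B_{2,p}$, hence to $\ell_p$ by \eqref{Peuchinski}. So $\BB_1$ is a quasi-greedy basis of $B\oplus\ell_p$ with $L_m[\BB_1]\approx\log m$.

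It remains to transfer $\BB_1$ to $\XX$. The closed span of the first basis vectors of the successive blocks of $B$ is a $1$-complemented copy of $\ell_p$ in $B$, and $B\approx B\oplus B$ by \eqref{BesovIso}; together with $B\lesssim_c\XX$ these yield $\ell_p\lesssim_c\XX$, whence $\XX\approx\XX\oplus\ell_p$ (absorb via $\ell_p\oplus\ell_p\approx\ell_p$), and also $\XX\approx B\oplus\XX$ (write $\XX\approx B\oplus\UU$ and absorb via $B\oplus B\approx B$). Combining, $\XX\approx B\oplus\XX\approx B\oplus\XX\oplus\ell_p\approx(B\oplus\ell_p)\oplus\XX$. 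By Theorem~\ref{DKKQG} the space $\XX\oplus\ell_p\approx\XX$ carries a quasi-greedy basis $\BB_3$, so Lemma~\ref{LemmaOne} shows that $\BB:=\BB_1\oplus\BB_3$, transported along these isomorphisms, is a quasi-greedy basis of $(B\oplus\ell_p)\oplus\XX\approx\XX$ with $L_m[\BB]\gtrsim L_m[\BB_1]\approx\log m$; the matching upper bound again comes from Theorem~\ref{CharacterizationSR}(a), completing the argument. (When $B=B_{1,p}$ one may skip the GOW step and take $\BB_1=\bigoplus_n\LL[2^n]$ on $(\bigoplus_n\ell_1^{2^n}[\LL])_p\approx B_{1,p}$, using Lemma~\ref{LemmaTwo} and Theorem~\ref{Linde}.)

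The delicate point is the interplay of Theorem~\ref{ConditionalityGOW} and Lemma~\ref{LemmaThree}. Because $c_0$ admits no conditional quasi-greedy basis, the only route to a conditional quasi-greedy basis whose finite-dimensional sections are copies of $\ell_\infty^n$ passes through the GOW construction, which inevitably drags along an $\ell_2$; the trick is to split it off via Lemma~\ref{LemmaThree} and to reassemble the $\ell_2$-blocks in the $\ell_p$-sense rather than the $\ell_2$-sense, so that the residue is a copy of $\ell_p$---which is complemented in $B_{\infty,p}$ and can therefore be absorbed. Making the finite-section dimensions and the exponents in Lemma~\ref{LemmaThree} fit together is the one step demanding genuine care; the rest is the Pe{\l}czy{\'n}ski decomposition method.
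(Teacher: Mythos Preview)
Your argument is correct and follows essentially the same route as the paper's proof: apply the GOW construction to the summing (resp.\ difference) system, take an $\ell_p$-sum of the resulting finite sections, identify the outcome as $B_{r,p}\oplus\ell_p$ via \eqref{Peuchinski} and \eqref{BesovIso}, and absorb everything into $\XX$ using Theorem~\ref{DKKQG}, Lemma~\ref{LemmaOne}, and Pe{\l}czy{\'n}ski-type decompositions.

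One cosmetic remark: your invocation of Lemma~\ref{LemmaThree} with $q=p$ is unnecessary. When both sums carry the same exponent~$p$, the space $(\bigoplus_n\YY_n)_p\oplus(\bigoplus_n\ZZ_n)_p$ is isometric to $(\bigoplus_n(\YY_n\oplus\ZZ_n))_p$, so Lemma~\ref{LemmaTwo} already does the job and the splitting into $\ell_\infty$-blocks and $\ell_2$-blocks can be performed afterwards by this obvious isometry. That is precisely what the paper does: it applies Lemma~\ref{LemmaTwo} to obtain a basis of $(\bigoplus_n(\ell_2^{2^n-n-2}\oplus\ell_r^n))_p$ and only then decomposes the space as $(\bigoplus_n\ell_2^{2^n-n-2})_p\oplus(\bigoplus_n\ell_r^n)_p\approx\ell_p\oplus B_{r,p}$. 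So the ``delicate point'' you highlight---needing Lemma~\ref{LemmaThree} to reassemble the $\ell_2$-parts in the $\ell_p$-sense---is not actually delicate here; Lemma~\ref{LemmaThree} is only genuinely needed when the two exponents differ, as in the proof of Theorem~\ref{T9}.
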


\begin{theorem}\label{T8}Let $\XX$ be a Banach space with a quasi-greedy basis.  If  $B_{q,0}\lesssim_c \XX$ for some $1< q<\infty$, then $\XX$ has a quasi-greedy basis $\BB$ with $L_m[\BB]\approx \log m$ for $m\ge 2$.
\end{theorem}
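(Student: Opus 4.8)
The plan is to follow the template of the preceding theorems: build a highly conditional quasi-greedy basis on a suitable complemented subspace of $\XX$, splice it to the given quasi-greedy basis of $\XX$ by means of Lemma~\ref{LemmaOne}, and re-absorb everything into $\XX$ with the Pe{\l}czy{\'n}ski decomposition technique. The whole point is the choice of that complemented subspace. Since $B_{q,0}=(\oplus_n\ell_q^{2^n})_0$ is $c_0$-saturated --- every infinite-dimensional closed subspace of a $c_0$-sum of finite-dimensional spaces contains $c_0$ --- it has no complemented copy of $\ell_q$, and none of our machinery can be run ``$\ell_q$-locally'' inside it. Instead I would detour through the Hilbertian model $B_{2,0}=(\oplus_n\ell_2^{n})_0$: on the one hand $B_{2,0}\lesssim_c B_{q,0}$, because Pe{\l}czy{\'n}ski's uniform embeddings \eqref{PeuRad} give $\ell_2^n\lesssim_c\ell_q^{2^n}$, hence $(\oplus_n\ell_2^n)_0\lesssim_c(\oplus_n\ell_q^{2^n})_0\approx B_{q,0}$ by \eqref{BesovIso}; on the other hand $B_{2,0}$ \emph{does} carry a conditional quasi-greedy basis, which the GOW-method extracts from the summing basis of $c_0$.

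Concretely, I would start from the summing system $\SSS$ of $c_0$, which by Lemma~\ref{Summingc0} satisfies $L_m[\SSS]\approx m$ and $c_0^{n}[\SSS]=\ell_\infty^n$. Since $L_m[\SSS]\approx m$, Theorem~\ref{ConditionalityGOW} (with a linear doubling function $\delta$) produces an almost greedy --- in particular quasi-greedy --- basis $\BB':=\GOW(\SSS)$ of $c_0\oplus\ell_2$ with $L_m[\BB']\gtrsim\log m$ and, by its ``moreover'' clause, with sections $(c_0\oplus\ell_2)^{2^n-2}[\BB']=c_0^n[\SSS]\oplus\ell_2^{2^n-n-2}=\ell_\infty^n\oplus\ell_2^{2^n-n-2}$ for $n\ge2$. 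As $2^n-2\approx 2^n$, Lemma~\ref{LemmaTwo} with $p=0$ turns $\BB'$ into a quasi-greedy basis $\BB_0:=\bigoplus_{n\ge2}\BB'[2^n-2]$ of $(\bigoplus_{n\ge2}(\ell_\infty^n\oplus\ell_2^{2^n-n-2}))_0$ with $L_m[\BB_0]\gtrsim\log m$; being quasi-greedy it also obeys $L_m[\BB_0]\le k_m[\BB_0]\lesssim\log m$ by Theorem~\ref{CharacterizationSR}(a), so $L_m[\BB_0]\approx\log m$. Distributing the outer $c_0$-sum over the two coordinates of each block --- the inner sums carrying, as everywhere in this paper, the max norm --- gives
\[
\Big(\bigoplus_{n\ge2}(\ell_\infty^n\oplus\ell_2^{2^n-n-2})\Big)_0=\Big(\bigoplus_n\ell_\infty^n\Big)_0\oplus\Big(\bigoplus_n\ell_2^{2^n-n-2}\Big)_0\approx c_0\oplus B_{2,0}\approx B_{2,0},
\]
where one uses that each $\ell_\infty^n$ is $1$-complemented in $c_0$ (so $(\oplus_n\ell_\infty^n)_0\approx c_0$), the isomorphism \eqref{BesovIso}, and the diagonal embedding $c_0\lesssim_c B_{2,0}$, the last two combined through the decomposition technique.

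It remains to transport this into $\XX$. By the above, $\BB_0$ is a quasi-greedy basis of a space isomorphic to $B_{2,0}$, and $B_{2,0}\lesssim_c B_{q,0}\lesssim_c\XX$. If $\BB_1$ is the given quasi-greedy basis of $\XX$, Lemma~\ref{LemmaOne} makes $\BB_0\oplus\BB_1$ a quasi-greedy basis of $B_{2,0}\oplus\XX$ with $L_m[\BB_0\oplus\BB_1]\gtrsim\log m$, hence $\approx\log m$. Finally $B_{2,0}\oplus B_{2,0}\approx B_{2,0}$ (again \eqref{BesovIso}) and $B_{2,0}\lesssim_c\XX$ yield, through the decomposition technique, $B_{2,0}\oplus\XX\approx\XX$; carrying $\BB_0\oplus\BB_1$ across this isomorphism produces the desired quasi-greedy basis $\BB$ of $\XX$ with $L_m[\BB]\approx\log m$.

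The step needing the most care will be the bookkeeping with the finite-dimensional sections delivered by the GOW-method: verifying that, once assembled in the $c_0$-sense, the $\ell_\infty^n$ summands are genuinely inert (i.e.\ that $(\oplus_n\ell_\infty^n)_0\approx c_0\lesssim_c B_{2,0}$) and that the several Pe{\l}czy{\'n}ski absorptions ($c_0\oplus B_{2,0}\approx B_{2,0}$, $B_{2,0}\oplus B_{2,0}\approx B_{2,0}$, $B_{2,0}\oplus\XX\approx\XX$) are legitimate. The one genuine idea, though, is the detour through $B_{2,0}$: the uniform embeddings $\ell_2^n\lesssim_c\ell_q^{2^n}$ reduce the problem for the $c_0$-like space $B_{q,0}$ to the Hilbertian case, which the summing basis of $c_0$ together with the GOW-method dispatch head-on.
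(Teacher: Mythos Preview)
Your proof is correct and follows essentially the same route as the paper's: apply the GOW-method to the summing basis of $c_0$, take the $c_0$-sum of the resulting finite sections via Lemma~\ref{LemmaTwo}, identify the outcome with $c_0\oplus B_{2,0}$, splice with the given quasi-greedy basis of $\XX$ through Lemma~\ref{LemmaOne}, and absorb using $B_{2,0}\lesssim_c B_{q,0}\lesssim_c\XX$ together with $B_{2,0}\oplus B_{2,0}\approx B_{2,0}$. The only cosmetic difference is that the paper carries the summand $c_0$ all the way to the end and absorbs $c_0$ and $B_{2,0}$ into $\XX$ separately, whereas you first collapse $c_0\oplus B_{2,0}\approx B_{2,0}$ and then absorb once; both are the same argument.
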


\begin{proof}[Proof of Theorems~\ref{T4}, \ref{T5}, \ref{T6},  \ref{T7} and \ref{T8}] Let us consider Banach spaces $\YY$ and $\ZZ$ such that
\begin{itemize}
\item[(\textbf{A})] either $\YY=\{0\}$ and $\ZZ$ has basis $\BB$ of type P,
\item[(\textbf{B})] or $\YY$ is a Banach space with a basis and $\ZZ$ is either $c_0$ or $\ell_1$.
\end{itemize}

By Invoking  Lemmas~\ref{coLemma}, \ref{l1Lemma},  \ref{AnsoLemma} and \ref{LemmaOne} we claim that the space $\YY\oplus\ZZ$ has a basis $\BB_0$ with $L_m[\BB_0]\approx m$ for $m\in\NN$. 
From Theorem~\ref{ConditionalityGOW} we deduce that $\VV:=\YY\oplus\ZZ\oplus\ell_2$ has  an almost greedy basis $\BB_1=\GOW(\BB_0)$ with  $\phi_m[\BB_1]\approx m^{1/2}$ for $m\in\NN$ and $L_m[\BB_1] \approx \log m$ for $m\ge 2$. 

In order to prove Theorem~\ref{T4} we consider the case (A) with $\ZZ=\XX$. Notice that in this case, since
$\ell_2\oplus\ell_2=\ell_2$, we have that  $\VV=\XX\oplus\ell_2\approx \XX$.

In order to prove Theorem~\ref{T5} we consider the case (B) with $\YY=\XX$.  Now, since $\ZZ\oplus\ZZ\approx\ZZ$ also holds, we have that  $\VV=\XX\oplus\ZZ\oplus\ell_2\approx \XX$.

Assume that we are in  case (A) and  that $\ZZ$ is either $c_0$ (in which case we put $r=\infty$) or $\ell_1$
(in which case we put $r=1$). In this situation we choose $\BB_0$ to be  the summing system when $r=\infty$ and the difference system when $r=1$. Therefore
\[
\VV^{2^n-2}[\BB_1]=\ell_2^{2^n-n-2}\oplus \ZZ^n[\BB_0]=\ell_2^{2^n-n-2}\oplus \ell_r^n, \quad n\in\NN.
\]
Let $p\in\{0\}\cup(1,\infty)$ and assume that when $p=2$ the hypotheses of Theorem~\ref{T6} hold, that when $p=0$ the hypotheses of Theorem~\ref{T8} hold, and that when $p\in(1,2)\cup(2,\infty)$ the hypotheses of Theorem~\ref{T7} hold.
Applying Lemma~\ref{LemmaTwo} we get a quasi-greedy basis $\BB_2:=\bigoplus_{n=1}^\infty \BB_1[2^n-2]$  for the Banach space \[
\WW=\left( \bigoplus_{n=1}^\infty (\ell_2^{2^n-n-2}\oplus \ell_r^n)\right)_p
\]
with $L_m[\BB_2]\approx \log m$. Moreover, in the case when $p=2$, $\BB_2$ is democratic with
$\phi_m[\BB_2]\approx m^{1/2}$.

In the case when $p=0$, we have that $\SSB:=\XX$ has a quasi-greedy basis $\BB_3$.
In the case when $p\in(1,2)\cup(2,\infty)$  Theorem~\ref{DKKQG} yields the existence of a quasi-greedy basis $\BB_3$ for $\SSB:=\XX\oplus \ell_p$.
In the case when $p=2$ Theorem~\ref{ConditionalityGOW} gives that $\SSB:=\XX\oplus \ell_2$ has 
 a quasi-greedy basis $\BB_3$ which is democratic with $\phi_m[\BB_2]\approx m^{1/2}$.
Then, by Lemma~\ref{LemmaOne},  
$\BB_4:=\BB_3\oplus\BB_2$ is a quasi-greedy basis for the Banach space
$\UU:=\SSB\oplus\WW$ with $L_m[\BB_4]\approx \log m$. Moreover, in the case
$p=2$, $\BB_4$ is democratic with $\phi_m[\BB_4]\approx m^{1/2}$.

With the basis $\BB_4$ and the isomorphisms \eqref{Peuchinski} and \eqref{BesovIso} in hand, we are ready to complete the proof.
Let $p\in(1,\infty)$.
 Taking into account that
$B_{r,p}\oplus \ell_p\approx B_{r,p}$ and that $B_{r,p}\oplus B_{r,p}\approx B_{r,p}$, we obtain
\begin{align*}
\UU&
\approx
\XX\oplus \ell_p \oplus \left( \bigoplus_{n=1}^\infty \ell_2^{2^n-n-2} \right)_p
\oplus
\left( \bigoplus_{n=1}^\infty \ell_r^n \right)_p \\
&\approx
 \XX\oplus \ell_p \oplus B_{2,p}\oplus \left( \bigoplus_{n=1}^\infty \ell_r^n\right)_p \\\
&=\XX\oplus \ell_p \oplus \ell_p \oplus B_{r,p} \approx \XX.
\end{align*}
This completes the proof of Theorems~\ref{T6} and \ref{T7}. 

Let $p=0$. 
Notice that the relations \eqref{PeuRad} and \eqref{BesovIso} give the isomorphisms $c_0\oplus B_{2,0} \approx B_{2,0}\oplus B_{2,0} \approx B_{2,0}$ as well as the complemented embedding $B_{2,0}\lesssim_c B_{q,0}$. Consequently,
$B_{2,0}\lesssim_c \XX$.
The chain of isomorphisms
\[
\UU
\approx
\XX \oplus \left( \bigoplus_{n=1}^\infty \ell_2^{2^n-n-2} \right)_0
\oplus
\left( \bigoplus_{n=1}^\infty \ell_\infty^n \right)_0 
\approx
 \XX\oplus B_{2,0} \oplus c_0 
=\XX,
\]
completes the proof of Theorem~\ref{T8}.
\end{proof}

\begin{example}\
 \begin{enumerate}
\item[(i)] The  unit-vector system is a shrinking basis of type P for the James space $\JJ$ introduced in \cite{James1950} (see, e.g., \cite{AlbiacKalton2016}*{Proposition 3.4.4 and Remark 3.4.5}).  It is also known that $\ell_2\lesssim_c \JJ$; indeed, the linear operator $L$  defined  in \eqref{Lifting} is bounded from $\ell_2$ into $\JJ$ and the linear operator $R$ defined  in \eqref{Retraction} is bounded from $\JJ$ into $\ell_2$ (see also \cite{CLL1977}*{Corollary 11}). Hence Theorem~\ref{T4} applies to $\JJ$ and to $\JJ^*$. By Proposition~\ref{PXLorentz}  and  \cite{AAW2017}*{Proposition 2.10} (which states that the unit-vector system is a basis of type P for Pisier-Xu spaces),
Theorem~\ref{T4}  also applies to $\PX^0_{p,2}$ for
$1<p<\infty$.

\item[(ii)] Theorem~\ref{T5} applies to the spaces $D_{2,0}$, $D_{2,1}$, $Z_{1,2}$, $Z_{2,1}$, $Z_{2,0}$, $Z_{0,2}$,
 the Hardy space $H_1$ and its predual  $\VMO$. 
 
 \item[(iii)] Theorem~\ref{T6} applies to the spaces $B_{\infty,2}$ and $B_{1,2}$.
 
 \item[(iv)] Theorem~\ref{T7} applies to the spaces $B_{\infty,p}$, $B_{1,p}$ and $Z_{0,p}$ for $1<p<\infty$.

\item[(v)] Theorem~\ref{T8} applies to $B_{p,0}$ and $Z_{p,0}$  for $1<p<\infty$,.
\end{enumerate}
Let us mention that, for $p\not=2, $, $D_{p,0}$ is not in our list of Banach spaces with  a quasi-greedy basis as conditional as possible yet.
\end{example}

\begin{theorem}\label{T9}Let $\XX$ be a Banach space and $1< p<\infty$. Assume that
$\ell_p\lesssim_c \XX$, that $c_0\lesssim_c \XX$  and that $\XX$ has a  basis. Then $\XX$ has a quasi-greedy basis $\BB$ with $L_m[\BB]\approx \log m$ for $m\ge 2$.
\end{theorem}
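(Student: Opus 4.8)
The plan is to mimic the architecture of the proof of Theorems~\ref{T4}--\ref{T8}, combining the summing system on $c_0$ with the GOW-method, then spreading the resulting basis across a mixed-norm direct sum via Lemma~\ref{LemmaThree} (rather than just Lemma~\ref{LemmaTwo}), and finally absorbing the extra pieces using Pe\l czy\'nski's decomposition technique together with the isomorphism $\ell_p\approx B_{2,p}$ from \eqref{Peuchinski}. The new feature compared with the previous block of theorems is that $\ZZ=c_0$ must be handled \emph{simultaneously} with an $\ell_p$-summability, which is exactly what the $(P_n,Q_n)$-splitting in Lemma~\ref{LemmaThree} is designed for: on each finite block we will split $\ell_\infty^n$ off as a $c_0$-summand while keeping the Hilbertian part in an $\ell_p$-sum.

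First I would let $\SSS$ be the summing system and apply Lemma~\ref{AnsoLemma} (or directly Lemma~\ref{coLemma}) to get a basis $\BB_0$ of $c_0$ with $L_m[\BB_0]\approx m$ and $c_0^{n}[\BB_0]=\ell_\infty^{n}$ for all $n$. Applying the GOW-method (Theorem~\ref{ConditionalityGOW}) produces an almost greedy basis $\BB_1=\GOW(\BB_0)$ of $\VV:=c_0\oplus\ell_2$ with $\phi_m[\BB_1]\approx m^{1/2}$, $L_m[\BB_1]\approx\log m$, and the crucial structural identity $\VV^{2^{n}-2}[\BB_1]=\ell_\infty^{n}\oplus\ell_2^{2^{n}-n-2}$. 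Next I would feed this into Lemma~\ref{LemmaThree} with $d_n=2^{n}-2$, taking $(P_n,Q_n)$ to be the canonical projections of $\VV^{d_n}[\BB_1]$ onto $\ZZ_n:=\ell_\infty^{n}$ and $\YY_n:=\ell_2^{2^{n}-n-2}$, with outer indices $q=0$ on the $c_0$-side and $p$ (our fixed exponent) on the Hilbertian side. This yields a quasi-greedy basis $\BB_2$ for
\[
\WW=\Bigl(\bigoplus_{n=1}^\infty \ell_2^{2^{n}-n-2}\Bigr)_p\oplus\Bigl(\bigoplus_{n=1}^\infty \ell_\infty^{n}\Bigr)_0
\]
with $L_m[\BB_2]\approx\log m$. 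By \eqref{BesovIso} the second summand is $B_{\infty,0}\approx c_0$ (since $(\oplus\ell_\infty^n)_0\approx c_0$), and by \eqref{Peuchinski}/\eqref{PeuRad} the first summand is $B_{2,p}\approx\ell_p$; hence $\WW\approx\ell_p\oplus c_0$.

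To reach $\XX$ itself, I would use Theorem~\ref{DKKQG} to endow $\SSB:=\XX\oplus\ell_p$ with a quasi-greedy basis $\BB_3$, then apply Lemma~\ref{LemmaOne} to form $\BB_4:=\BB_3\oplus\BB_2$, a quasi-greedy basis for $\SSB\oplus\WW$ with $L_m[\BB_4]\approx\log m$. Finally, using $\ell_p\lesssim_c\XX$ and $c_0\lesssim_c\XX$ together with Pe\l czy\'nski decomposition (and $\ell_p\oplus\ell_p\approx\ell_p$, $c_0\oplus c_0\approx c_0$), one checks
\[
\SSB\oplus\WW\approx\XX\oplus\ell_p\oplus\ell_p\oplus c_0\approx\XX,
\]
which finishes the proof. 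The main obstacle is the second step: verifying that Lemma~\ref{LemmaThree} genuinely applies, i.e. that the coordinate splitting of $\VV^{d_n}[\BB_1]$ into its $\ell_\infty^{n}$ and $\ell_2^{2^{n}-n-2}$ summands is an isomorphism with norms bounded \emph{uniformly in $n$} — this is where one must lean on the precise structural output of the GOW-method rather than just the asymptotic estimates, and also where one must be careful that mixing a $c_0$-sum ($q=0$) with an $\ell_p$-sum does not spoil the quasi-greediness transferred from $\BB_1$.
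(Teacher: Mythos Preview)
Your proposal is correct and follows essentially the same route as the paper: apply $\GOW$ to the summing system, use Lemma~\ref{LemmaThree} with the canonical projections onto the $\ell_\infty^n$ and $\ell_2^{2^n-n-2}$ pieces (with outer indices $0$ and $p$, respectively) to obtain a quasi-greedy basis of $c_0\oplus\ell_p$ with $L_m\approx\log m$, and then pass to $\XX$ via Theorem~\ref{DKKQG}, Lemma~\ref{LemmaOne}, and the complemented copies of $\ell_p$ and $c_0$. The paper merely reverses the order of presentation, first reducing to $\XX=D_{p,0}$ and then constructing the basis there; your two closing worries are non-issues, since the identity $\VV^{2^n-2}[\BB_1]=\ell_\infty^n\oplus\ell_2^{2^n-n-2}$ in Theorem~\ref{ConditionalityGOW} is an equality (hence the projections are uniformly bounded by construction), and Lemma~\ref{LemmaThree} is stated precisely to allow distinct outer exponents $p$ and $q$.
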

\begin{proof}Since $\ell_p\oplus \ell_p\approx \ell_p$ and $c_0\oplus c_0\approx c_0$, taking into account Theorem~\ref{DKKQG} and Lemma~\ref{LemmaOne}, it sufices to prove the result for $\XX=D_{p,0}$. Let $\SSS$ the summing basis if $c_0$ and let $P_n$ and $Q_n$ be the canonical projections from  $\ell_\infty^n \oplus \ell_2^{2^n-n-2}$ onto, respectively, $\ell_\infty^n$ and $\ell_2^{2^n-n-2}$. From Lemma~\ref{coLemma}, Theorem~\ref{ConditionalityGOW}, and  Lemma~\ref{LemmaThree} we infer that $\BB_0=\bigoplus_{n=1}^\infty \GOW(\SSS)[P_n,Q_n]$ is a quasi-greedy basis for 
\[
\left(\bigoplus_{n=1}^\infty \ell_\infty^n\right)_0
\oplus \left(\bigoplus_{n=1}^\infty \ell_2^{2^n-n-2}\right)_p\approx c_0 \oplus B_{2,p}
\approx c_0\oplus \ell_p \approx D_{p,0}
\]
with $L_m[\BB_0]\approx \log m$ for $m\ge 2$.
\end{proof}

\begin{example}Theorem~\ref{T9} applies to $D_{p,0}$ for $1<p<\infty$.
\end{example}

To close this article, we revisit the advances in the superreflexive case carried out in \cite{GW2014}.

\begin{theorem}[cf.\ \cite{GW2014}*{Theorem 1.2 and Corollary 3.13}]\label{T30}
Let $\XX$ be a Banach space with a basis. If
$\ell_2\lesssim_c \XX$ then for any  $0<a<1$ the space $\XX$ has a almost greedy basis $\BB$ with
$\phi_m[\BB]\approx m^{1/2}$ and
 $L_m[\BB]\gtrsim(\log m)^a$ for $m\in\NN$.
\end{theorem}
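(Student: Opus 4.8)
The plan is to combine the two main technical tools from the preliminary section: the GOW-method (Theorem~\ref{ConditionalityGOW}) and the construction in $\ell_2$ of bases with power-type w-conditionality constants (Theorem~\ref{GWHilbert}). First I would fix $0<a<1$ and apply Theorem~\ref{GWHilbert} to obtain a basis $\BB$ in $\ell_2$ with $L_m[\BB]\gtrsim m^a$ for $m\in\NN$; that is, $L_m[\BB]\ge\delta(m)$ for the doubling increasing function $\delta(t)=c\,t^a$ for a suitable constant $c>0$. Then I would feed this basis into the GOW-method: Theorem~\ref{ConditionalityGOW} produces an almost greedy basis $\BB_0:=\GOW(\BB)$ of $\ell_2\oplus\ell_2$ with $\phi_m[\BB_0]\approx m^{1/2}$ and $L_m[\BB_0]\gtrsim\delta(\log m)\approx(\log m)^a$ for $m\ge2$. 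Since $\ell_2\oplus\ell_2\approx\ell_2$, this already settles the case $\XX=\ell_2$, which is essentially \cite{GW2014}*{Corollary 3.13}.

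The remaining work is to transfer this to an arbitrary Banach space $\XX$ with a basis satisfying $\ell_2\lesssim_c\XX$. The natural route is a Pe{\l}czy{\'n}ski-style decomposition. Write $\XX\approx\YY\oplus\ell_2$ where $\YY$ is a complemented subspace of $\XX$; more precisely, since $\ell_2\lesssim_c\XX$ we have $\XX\approx\XX\oplus\ell_2$ by the standard decomposition argument, using $\ell_2\oplus\ell_2\approx\ell_2$ together with the fact that $\XX$ has a basis (so the complement of $\ell_2$ in $\XX$ also has a basis, or at least the bounded approximation property needed to run the argument). Having $\XX\approx\XX\oplus\ell_2$, I would apply Lemma~\ref{LemmaOne} with $\BB_1$ any basis of $\XX$ (here one can take $\delta$ to be the doubling function $(\log t)^a$, noting it is increasing and doubling since $(\log 2t)^a\le C(\log t)^a$ for $t\ge2$): the direct sum $\BB_1\oplus\BB_0$ is then a basis of $\XX\oplus(\ell_2\oplus\ell_2)\approx\XX$ with $L_m[\BB_1\oplus\BB_0]\gtrsim(\log m)^a$ for $m\ge2$.

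However, to get the stronger conclusion that the basis is \emph{almost greedy} with $\phi_m\approx m^{1/2}$, the direct-sum Lemma~\ref{LemmaOne} is not quite enough on its own, because part (a) only preserves quasi-greediness and part (b) requires both summands to be democratic with matching fundamental functions. The cleaner approach is to realize $\XX$ itself (not a direct sum with an extra basis) as the output of the GOW-method. So I would instead argue: since $\XX\approx\XX\oplus\ell_2$ and $\XX$ has a basis, let $\BB_2$ be a basis of $\XX$; if $\XX$ happens to have a basis $\BB_3$ with $L_m[\BB_3]\gtrsim m$ one could apply Theorem~\ref{ConditionalityGOW} directly, but in general one first passes to $\ell_2$. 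The honest statement is that we get an almost greedy basis of $\XX\oplus\ell_2\approx\XX$ by taking $\BB_0=\GOW(\BB)$ on the $\ell_2$ summand and noting (using the ``Moreover'' clause of Theorem~\ref{ConditionalityGOW} plus Lemma~\ref{LemmaOne}(b), since $\phi_m[\GOW(\BB)]\approx m^{1/2}$ matches the fundamental function of the Hilbertian almost greedy basis, and both are democratic) that the combined basis is democratic with $\phi_m\approx m^{1/2}$, hence almost greedy by the quasi-greedy-plus-democratic characterization of \cite{DKKT2003}. The main obstacle is precisely this bookkeeping: ensuring that when we write $\XX\approx\XX\oplus\ell_2$ and place an almost greedy basis with fundamental function $m^{1/2}$ on the $\ell_2$ factor, the basis we already have on the $\XX$ factor can be replaced by one whose fundamental function is also $\approx m^{1/2}$ (e.g.\ by splitting off another $\ell_2$ summand and using the GOW output there too), so that Lemma~\ref{LemmaOne}(b) applies and democracy — hence almost greediness — is retained for the whole basis.
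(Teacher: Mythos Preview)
Your proposal is correct and, once you arrive at the idea in your last sentence, it matches the paper's proof exactly: apply the GOW-method twice---once to the Garrig\'os--Wojtaszczyk basis of $\ell_2$ from Theorem~\ref{GWHilbert} to obtain an almost greedy basis $\BB_1$ on $\ell_2\oplus\ell_2$ with $\phi_m[\BB_1]\approx m^{1/2}$ and $L_m[\BB_1]\gtrsim(\log m)^a$, and once to an arbitrary basis of $\XX$ to obtain an almost greedy basis $\BB_2$ on $\XX\oplus\ell_2$ with $\phi_m[\BB_2]\approx m^{1/2}$---then combine via Lemma~\ref{LemmaOne}(a),(b) to get the desired basis on $\XX\oplus\ell_2\oplus\ell_2\oplus\ell_2\approx\XX$. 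The paper's proof is precisely this, written without the exploratory detours of your second and third paragraphs.
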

\begin{proof}
Combining  Theorem~\ref{ConditionalityGOW} with Theorem~\ref{GWHilbert} we obtain an almost greedy basis $\BB_1$ for $\ell_2\oplus\ell_2$ with $\phi_m[\BB_1]\approx m^{1/2}$ and $L_m[\BB_1]\gtrsim (\log m)^a$. Applying again Theorem~\ref{ConditionalityGOW} we get an almost greedy basis $\BB_2$ for $\XX\oplus\ell_2$ with $\phi_m[\BB_2]\approx m^{1/2}$. Hence, by Lemma~\ref{LemmaOne}, $\BB_2\oplus\BB_1$ is a basis as desired for $\XX\oplus\ell_2\oplus\ell_2\oplus\ell_2\approx \XX$.
\end{proof}

\begin{example}\
\begin{enumerate}
\item[(i)] Theorem~\ref{T30} applies to the spaces $Z_{2,p}$, $Z_{p,2}$, $B_{p,2}$, $D_{p,2}$,  $L_p$  for $p\in(1,\infty)$, and  the Lorentz sequence spaces $\ell_{p,2}$ for $1< p<\infty$. 

\item[(ii)] More generally (see \cite{LinPel1968}*{Proposition 7.3},   \cite{JRZ1971}*{Theorem 5.1}, \cite{JohnsonOdell1974}*{Corollary 1}
and \cite{KP1962}*{Corollary 1}), Theorem~\ref{T30} applies 
 to any separable $\SL_p$-space  that is non-isomorphic to $\ell_p$, for $p\in(1,\infty)$.
 \end{enumerate}
\end{example}

\begin{theorem}[cf.\ \cite{GW2014}*{Corollary 3.12}]\label{T31}
Let $\XX$ be a Banach space with a basis. 
If $\ell_p\lesssim_c \XX$ for some $1< p<\infty$, then for any  $0<a<1$ the space $\XX$ has a quasi-greedy basis $\BB$ with $L_m[\BB]\gtrsim(\log m)^a$ for $m\in\NN$.
\end{theorem}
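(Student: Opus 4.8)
The plan is to mimic the proof of Theorem~\ref{T30}, replacing the Hilbert-space ingredient by the $\ell_p$-analogue furnished by the GOW-machinery, and then to absorb the auxiliary summands via the Pe{\l}czy{\'n}ski decomposition technique. First I would recall that by Theorem~\ref{GWHilbert} there is, for each $0<a<1$, a basis $\BB$ in $\ell_2$ with $L_m[\BB]\gtrsim m^a$ for $m\in\NN$; the function $\delta(t)=t^a$ is doubling and increasing, so Theorem~\ref{ConditionalityGOW} applies to $\BB$. It produces an almost greedy basis $\GOW(\BB)$ for $\ell_2\oplus\ell_2\approx\ell_2$ whose $w$-conditionality constants satisfy $L_m[\GOW(\BB)]\gtrsim \delta(\log m)=(\log m)^a$ for $m\ge2$, and whose fundamental function is of the order of $(m^{1/2})_{m=1}^\infty$. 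Denote this basis $\BB_1$.

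Next, since $L_m[\BB_1]\gtrsim(\log m)^a$ and $s\mapsto (\log s)^a$ — equivalently the composition with the doubling function $\delta$ already in play — is still doubling and increasing on $[2,\infty)$ (one adjusts $\delta$ near the origin harmlessly, as done throughout Section~\ref{Preliminaries}), we may apply Lemma~\ref{LemmaTwo}(a),(b) with $p=p$ and $d_n\approx D^n$ for a suitable $D>1$: it yields a quasi-greedy basis $\BB_2=\bigoplus_{n=1}^\infty \BB_1[d_n]$ for the space $\left(\bigoplus_{n=1}^\infty \ell_2^{d_n}\right)_p$ with $L_m[\BB_2]\gtrsim(\log m)^a$ for $m\ge2$, and this basis is democratic with $\phi_m[\BB_2]\approx m^{1/p}$. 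By the complemented embeddings \eqref{PeuRad} and the isomorphism \eqref{BesovIso}, $\left(\bigoplus_{n=1}^\infty \ell_2^{d_n}\right)_p\approx B_{2,p}\approx \ell_p$. Alternatively, and more in line with the proof of Theorem~\ref{T30}, one applies Theorem~\ref{ConditionalityGOW} a second time directly to $\BB_1$ (viewed as a basis of $\ell_2$, which is $\lesssim_c$ of $\ell_p$ via \eqref{PeuRad} and Pe{\l}czy{\'n}ski decomposition, after noting $\XX\oplus\ell_p\approx\XX$), but because the target space here is $\ell_p$ rather than a Hilbert space the cleanest route is through Lemma~\ref{LemmaTwo}.

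Finally I would glue $\XX$ back in. Since $\ell_p\lesssim_c\XX$ and $\XX$ has a basis, Theorem~\ref{DKKQG} gives a quasi-greedy basis $\BB_3$ for $\XX\oplus\ell_p$. By Lemma~\ref{LemmaOne}(a), $\BB_4:=\BB_3\oplus\BB_2$ is a quasi-greedy basis for $(\XX\oplus\ell_p)\oplus\ell_p$ with $L_m[\BB_4]\ge L_m[\BB_2]\gtrsim(\log m)^a$ for $m\ge2$. Since $\ell_p\oplus\ell_p\approx\ell_p$ and $\ell_p\lesssim_c\XX$, the Pe{\l}czy{\'n}ski decomposition technique yields $(\XX\oplus\ell_p)\oplus\ell_p\approx\XX\oplus\ell_p\approx\XX$, so $\BB_4$ transports to a quasi-greedy basis for $\XX$ with the required lower bound on its $w$-conditionality constants, hence on its conditionality constants. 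The one point requiring a little care — the ``main obstacle'' such as it is — is verifying that $t\mapsto(\log t)^a$ (suitably modified near $0$) is genuinely doubling and increasing so that the hypotheses of Theorem~\ref{ConditionalityGOW} and Lemma~\ref{LemmaTwo} are met; this is routine, since $(\log 2t)^a\le C(\log t)^a$ for $t\ge2$ with $C$ depending only on $a$, but it is the place where one must be explicit.
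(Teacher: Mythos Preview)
Your proposal is correct and follows essentially the same route as the paper: build a quasi-greedy basis for $\ell_2$ with $L_m\gtrsim(\log m)^a$ (the paper invokes Theorem~\ref{T30} directly; you reconstruct it from Theorem~\ref{GWHilbert} and Theorem~\ref{ConditionalityGOW}), push it through Lemma~\ref{LemmaTwo} to obtain a quasi-greedy basis for $(\bigoplus_n \ell_2^{d_n})_p\approx B_{2,p}\approx\ell_p$, and then glue it to $\XX$ via Theorem~\ref{DKKQG} and Lemma~\ref{LemmaOne}, using $\ell_p\lesssim_c\XX$ and $\ell_p\oplus\ell_p\approx\ell_p$ to conclude $\XX\oplus\ell_p\approx\XX$.

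One small slip: your claim that $\BB_2$ is democratic with $\phi_m[\BB_2]\approx m^{1/p}$ is not justified by Lemma~\ref{LemmaTwo}(c), since that part requires $\phi_m[\BB_1]\approx m^{1/p}$, whereas you have $\phi_m[\BB_1]\approx m^{1/2}$; this only works for $p=2$. The claim is irrelevant to the theorem (which asks only for quasi-greediness), so just drop it. Also, the final isomorphism $\XX\oplus\ell_p\approx\XX$ does not need the full Pe{\l}czy{\'n}ski decomposition technique: writing $\XX\approx\ell_p\oplus\YY$ for some $\YY$ (from $\ell_p\lesssim_c\XX$) and using $\ell_p\oplus\ell_p\approx\ell_p$ suffices.
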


\begin{proof} Taking into account Lemma~\ref{LemmaOne},  Theorem~\ref{DKKQG}, and the fact that $\ell_p\oplus\ell_p\approx\ell_p$, it suffices to consider the case $\XX=\ell_p$.
Use Theorem~\ref{T30} to pick a quasi-greedy basis   $\BB$ for $\ell_2$ with $L_m[\BB]\gtrsim (\log m)^a$ for all $m\in\NN$. By Lemma~\ref{LemmaTwo}, $\BB_0=\bigoplus_{n=1}^\infty \BB[2^n]$ is a quasi-greedy basis for $\YY=(\bigoplus \ell_2^{2^n}[\BB])_p$ with $L_m[\BB_0]\gtrsim (\log m)^a$. Since any $d$-dimensional Hilbert space is isometric to $\ell_2^d$, the isomorphisms \eqref{Peuchinski} and \eqref{BesovIso} yield $\YY\approx (\bigoplus \ell_2^{2^n})_p\approx B_{2,p}\approx\ell_p$.
\end{proof}

\begin{example} Theorem~\ref{T31} applies to the spaces $\ell_p$, $D_{p,q}$, $B_{p,q}$ and $Z_{p,q}$
for $p,q\in(1,\infty)\setminus\{2\}$, and the Lorentz sequence spaces $\ell_{p,q}$ for $1< p,q<\infty$, $q\not=2$. 
\end{example}

\begin{bibsection}
\begin{biblist}

\bib{AA2016}{article}{
   author={Albiac, F.},
   author={Ansorena, J.~L.},
   title={The isomorphic classification of Besov spaces over $\mathbb{R}^d$
   revisited},
   journal={Banach J. Math. Anal.},
   volume={10},
   date={2016},
   number={1},
   pages={108--119},
}

\bib{AA2017}{article}{
   author={Albiac, F.},
   author={Ansorena, J.L.},
   title={Isomorphic classification of mixed sequence spaces and of Besov
   spaces over $[0,1]^d$},
   journal={Math. Nachr.},
   volume={290},
   date={2017},
   number={8-9},
   pages={1177--1186},
}

\bib{AA2017bis}{article}{
   author={Albiac, F.},
   author={Ansorena, J. L.},
   title={Characterization of 1-almost greedy bases},
   journal={Rev. Mat. Complut.},
   volume={30},
   date={2017},
   number={1},
   pages={13--24},
}

\bib{AAGHR2015}{article}{
 author={Albiac, F.},
 author={Ansorena, J.~L.},
 author={Garrig{\'o}s, G.},
 author={Hern{\'a}ndez, E.},
 author={Raja, M.},
 title={Conditionality constants of quasi-greedy bases in super-reflexive
 Banach spaces},
 journal={Studia Math.},
 volume={227},
 date={2015},
 number={2},
 pages={133--140},
}

\bib{AAW2017}{article}{
   author={Albiac, F.},
   author={Ansorena, J.~L.},
   author={Wojtaszczyk, P.},
  title={Conditionality constants of quasi-greedy bases in non-superreflexive Banach space},
 journal={Constr. Approx.},
   doi= {10.1007/s00365-017-9399-x},
}

\bib{AlbiacKalton2016}{book}{
 author={Albiac, F.},
 author={Kalton, N.~J.},
 title={Topics in Banach space theory, 2nd revised and updated edition},
 series={Graduate Texts in Mathematics},
 volume={233},
 publisher={Springer International Publishing},
 date={2016},
 pages={xx+508},
 }

\bib{Allen1978}{article}{
 author={Allen, G.~D.},
  title={Duals of Lorentz spaces},
  journal={ Pacific J. Math.},
  volume={177},
 date={1978},
 pages={287--291},
}

\bib{BBGHO2018}{article}{
author={Bern\'a, P.},
 author={Blasco, O.},
 author={Garrig{\'o}s, G.},
 author={Hern{\'a}ndez, E.},
 author={Oikhberg, T.},
 title={The greedy algorithm for non-greedy type bases},
 journal={Submitted},
 }

\bib{BenSha1988}{book}{
 author={Bennett, C.},
 author={Sharpley, R.},
 title={Interpolation of operators},
 series={Pure and Applied Mathematics},
 volume={129},
 publisher={Academic Press, Inc., Boston, MA},
 date={1988},
 pages={xiv+469},
}

\bib{CLL1977}{article}{
   author={Casazza, P.~G.},
   author={Lin, B.~L.},
   author={Lohman, R.~H.},
   title={On James' quasi-reflexive Banach space},
   journal={Proc. Amer. Math. Soc.},
   volume={67},
   date={1977},
   number={2},
   pages={265--271},
   issn={0002-9939},
   review={\MR{0458129}},
}

\bib{DKK2003}{article}{
 author={Dilworth, S.~J.},
 author={Kalton, N.~J.},
 author={Kutzarova, D.},
 title={On the existence of almost greedy bases in Banach spaces},
 note={Dedicated to Professor Aleksander Pe\l czy\'nski on the occasion of
 his 70th birthday},
 journal={Studia Math.},
 volume={159},
 date={2003},
 number={1},
 pages={67--101},
}

\bib{DKKT2003}{article}{
 author={Dilworth, S.~J.},
 author={Kalton, N.~J.},
 author={Kutzarova, D.},
 author={Temlyakov, V.~N.},
 title={The thresholding greedy algorithm, greedy bases, and duality},
 journal={Constr. Approx.},
 volume={19},
 date={2003},
 number={4},
 pages={575--597},
}

\bib{DilworthMitra2001}{article}{
   author={Dilworth, S.~J.},
   author={Mitra, D.},
   title={A conditional quasi-greedy basis of $l_1$},
   journal={Studia Math.},
   volume={144},
   date={2001},
   number={1},
   pages={95--100},
}

\bib{DKW2002}{article}{
   author={Dilworth, S.~J.},
   author={Kutzarova, D.},
   author={Wojtaszczyk, P.},
   title={On approximate $l_1$ systems in Banach spaces},
   journal={J. Approx. Theory},
   volume={114},
   date={2002},
   number={2},
   pages={214--241},
}

\bib{DST2012}{article}{
   author={Dilworth, S.~J.},
   author={Soto-Bajo, M.},
   author={Temlyakov, V.~N.},
   title={Quasi-greedy bases and Lebesgue-type inequalities},
   journal={Studia Math.},
   volume={211},
   date={2012},
   number={1},
   pages={41--69},
}

\bib{GHO2013}{article}{
 author={Garrig{\'o}s, G.},
 author={Hern{\'a}ndez, E.},
 author={Oikhberg, T.},
 title={Lebesgue-type inequalities for quasi-greedy bases},
 journal={Constr. Approx.},
 volume={38},
 date={2013},
 number={3},
 pages={447--470},
 }

\bib{GW2014}{article}{
 author={Garrig{\'o}s, G.},
 author={Wojtaszczyk, P.},
 title={Conditional quasi-greedy bases in Hilbert and Banach spaces},
 journal={Indiana Univ. Math. J.},
 volume={63},
 date={2014},
 number={4},
 pages={1017--1036},
 }
 
 \bib{James1950}{article}{
   author={James, R.~C.},
   title={Bases and reflexivity of Banach spaces},
   journal={Ann. of Math. (2)},
   volume={52},
   date={1950},
   pages={518--527},
}

\bib{JohnsonOdell1974}{article}{
   author={Johnson, W.~B.},
   author={Odell, E.},
   title={Subspaces of $L_{p}$ which embed into $l_{p}$},
   journal={Compositio Math.},
   volume={28},
   date={1974},
   pages={37--49},
}
 
 \bib{JRZ1971}{article}{
   author={Johnson, W.~B.},
   author={Rosenthal, H.~P.},
   author={Zippin, M.},
   title={On bases, finite dimensional decompositions and weaker structures
   in Banach spaces},
   journal={Israel J. Math.},
   volume={9},
   date={1971},
   pages={488--506},
}

\bib{Gogyan2010}{article}{
   author={Gogyan, S.},
   title={An example of an almost greedy basis in $L^1(0,1)$},
   journal={Proc. Amer. Math. Soc.},
   volume={138},
   date={2010},
   number={4},
   pages={1425--1432},
}

\bib{KP1962}{article}{
   author={Kadec, M.~I.},
   author={Pe\l czy\'nski, A.},
   title={Bases, lacunary sequences and complemented subspaces in the spaces
   $L_{p}$},
   journal={Studia Math.},
   volume={21},
   date={1961/1962},
   pages={161--176},
}
\bib{KonyaginTemlyakov1999}{article}{
   author={Konyagin, S. V.},
   author={Temlyakov, V. N.},
   title={A remark on greedy approximation in Banach spaces},
   journal={East J. Approx.},
   volume={5},
   date={1999},
   number={3},
   pages={365--379},
}

\bib{LinPel1968}{article}{
   author={Lindenstrauss, J.},
   author={Pe{\l}czy{\'n}ski, A.},
   title={Absolutely summing operators in $\LL_{p}$-spaces and their
   applications},
   journal={Studia Math.},
   volume={29},
   date={1968},
   pages={275--326},
}

\bib{LinTza1977}{book}{
 author={Lindenstrauss, J.},
 author={Tzafriri, L.},
 title={Classical Banach spaces. I},
 note={Sequence spaces;
 Ergebnisse der Mathematik und ihrer Grenzgebiete, Vol. 92},
 publisher={Springer-Verlag, Berlin-New York},
 date={1977},
 pages={xiii+188},
}

\bib{LinTza1972}{article}{
 author={Lindenstrauss, J.},
 author={Tzafriri, L.},
title={On Orlicz sequence spaces II},
journal={Israel J. Math.},
volume={11},
year={1972},
pages={355--379},
}

\bib{Pel1960}{article}{
 author={Pe{\l}czy{\'n}ski, A.},
 title={Projections in certain Banach spaces},
 journal={Studia Math.},
 volume={19},
 date={1960},
 pages={209--228},
}

\bib{PisierXu1987}{article}{
 author={Pisier, G.},
 author={Xu, Q.~H.},
 title={Random series in the real interpolation spaces between the spaces $v_p$},
 conference={
 title={Geometrical aspects of functional analysis (1985/86)},
 },
 book={
 series={Lecture Notes in Math.},
 volume={1267},
 publisher={Springer, Berlin},
 },
 date={1987},
 pages={185--209},
 }

\bib{Singer1970}{book}{
 author={Singer, I.},
 title={Bases in Banach spaces. I},
 note={Die Grundlehren der mathematischen Wissenschaften, Band 154},
 publisher={Springer-Verlag, New York-Berlin},
 date={1970},
 pages={viii+668},
}

\bib{TemlyakovYangYe2011}{article}{
 author={Temlyakov, V.~N.},
 author={Yang, M.},
 author={Ye, P.},
 title={Lebesgue-type inequalities for greedy approximation with respect
 to quasi-greedy bases},
 journal={East J. Approx.},
 volume={17},
 date={2011},
 number={2},
 pages={203--214},
}

\bib{TemlyakovYangYeB}{article}{
   author={Temlyakov, V.~N.},
   author={Yang, M.},
   author={Ye, P.},
   title={Greedy approximation with regard to non-greedy bases},
   journal={Adv. Comput. Math.},
   volume={34},
   date={2011},
   number={3},
   pages={319--337},
}

\bib{Wo2000}{article}{
 author={Wojtaszczyk, P.},
 title={Greedy algorithm for general biorthogonal systems},
 journal={J. Approx. Theory},
 volume={107},
 date={2000},
 number={2},
 pages={293--314},
}

\end{biblist}
\end{bibsection}

\end{document}